\tikzset{->-/.style={decoration={
  markings,
  mark=at position #1 with {\arrow{Stealth[length=7pt,width=4.67pt]}}},postaction={decorate}}}
\tikzset{mapping/.style={decoration={
  markings,
  mark=at position #1 with {\arrow{Classical TikZ Rightarrow[length=3pt]}}},postaction={decorate}}}
\tikzset{snake it/.style={decorate, decoration=snake}}
\numberwithin{equation}{section}
\newcommand{\R}{{\mathds R}}
\newcommand{\Z}{{\mathds Z}}
\newcommand{\Area}{\operatorname{Area}}
\newcommand{\Radius}{\operatorname{Radius}}
\newcommand{\lk}{\operatorname{lk}}
\def\ker{{\rm{ker}}}
\def\tX{\widetilde{X}}
\theoremstyle{plain}
\newtheorem{theorem}{Theorem}[section]
\newtheorem{corollary}[theorem]{Corollary}
\newtheorem{lemma}[theorem]{Lemma}
\newtheorem{proposition}[theorem]{Proposition}
\newtheorem*{question*}{Question}
\theoremstyle{definition}
\newtheorem{remark}[theorem]{Remark}
\newtheorem*{acknowledgements*}{Acknowledgements}
\newtheorem{definition}[theorem]{Definition}
\newtheorem*{notation*}{Notation}
\newtheorem*{convention*}{Convention}
\title{Explicit polynomial bounds on Dehn functions of subgroups of hyperbolic groups}
\author{Robert Kropholler}
\address{Robert Kropholler, Mathematics Institute, Zeeman Building, University of Warwick, Coventry CV4 7AL, United Kingdom}
\email{robertkropholler@gmail.com}
\author{Claudio Llosa Isenrich}
\address{Claudio Llosa Isenrich, Faculty of Mathematics, Karlsruhe Institute of Technology, Englerstr. 2, 76131 Karlsruhe, Germany}
\email{claudio.llosa@kit.edu}
\author{Ignat Soroko}
\address{Ignat Soroko, Department of Mathematics, University of North Texas, Denton, TX 76203-5017, USA}
\email{ignat.soroko@unt.edu, ignat.soroko@gmail.com}
\thanks{}
\keywords{Dehn functions, subgroups of hyperbolic groups, finiteness properties, cube complexes}
\subjclass[2020]{20F67; 20F65; 20F69; 20F06}
\begin{document}

\begin{abstract}
    In 1999 Brady constructed the first example of a non-hyperbolic finitely presented subgroup of a hyperbolic group by fibring a non-positively curved cube complex over the circle. We show that his example has Dehn function bounded above by $n^{96}$. This provides the first explicit polynomial upper bound on the Dehn function of a finitely presented non-hyperbolic subgroup of a hyperbolic group. We also determine the precise hyperbolicity constant for the $1$-skeleton of the universal cover of the cube complex in Brady's construction with respect to the $4$-point condition for hyperbolicity.
\end{abstract}

\maketitle

\section{Introduction}

A classical quasi-isometry invariant of finitely presented groups are Dehn functions. Among groups studied in geometric group theory, hyperbolic groups play a distinguished role. They can be characterised among all finitely presented groups by the linearity of their Dehn function~\cite{Gro-87}. Hyperbolic groups share many strong properties, and a natural problem that has recently attracted much attention is to understand the geometry of their subgroups. In this work, we contribute towards this problem, by deriving the first explicit polynomial upper bound on the Dehn function of a finitely presented non-hyperbolic subgroup of a hyperbolic group.
\begin{theorem}\label{thm:Main-Dehn}
    There is a hyperbolic group $G$ with a finitely presented subgroup $H\leq G$ which has Dehn function $\delta_H$ satisfying $n^2\preccurlyeq \delta_H(n)\preccurlyeq n^{96}$. 
\end{theorem}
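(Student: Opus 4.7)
The lower bound $n^2 \preccurlyeq \delta_H$ is essentially automatic: Brady's subgroup $H$ is finitely presented but not hyperbolic (it contains an undistorted $\Z^2$ arising from the ladder structure of the fibre), and a finitely presented non-hyperbolic group has Dehn function at least quadratic by the Bowditch/Gromov gap theorem. So the substance of the statement is the explicit polynomial upper bound, and my plan concentrates on that.

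The approach is the standard ``fill in the universal cover, then push into the fibre'' strategy. Brady's complex $X$ is a compact non-positively curved cube complex carrying a circle-valued Morse function $f\colon X\to S^1$ whose lift $\widetilde f\colon \tX\to\R$ is $\G$-equivariant for $\G=\pi_1(X)$, with $H=\ker(f_\ast\colon \G\to\Z)$ realised as the fundamental group of a fibre $L=\widetilde f^{-1}(t_0)$. Given a null-homotopic word $w$ of length $n$ over a finite generating set of $H$, I would first view $w$ as a loop $\g$ in $L\subset\tX$ and use the hyperbolicity of $\tX^{(1)}$, with the explicit constant determined earlier in the paper via the $4$-point condition, to produce a van Kampen diagram $D$ for $\g$ in $\tX$ of area at most $Cn^2$ and diameter $O(n)$. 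Then I would systematically deform $D$ into a diagram supported in $L$ by processing it one critical level of $\widetilde f$ at a time: each $1$-cell or $2$-cell of $D$ crossing a critical level is replaced by combinatorial data coming from flowing the ascending/descending link around the corresponding critical vertex, using a fixed finite collection of relators of $H$ encoding how a cube is cut by $L$ and how these cuts close up along $\G$-translates.

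The main difficulty, and what forces the exponent to be as large as $96$, is keeping simultaneous control of the area, the boundary length, and the transverse range $[\min\widetilde f(D),\max\widetilde f(D)]$ through the push-down. Each elementary push across a critical level multiplies the count of fibre-cells by a constant depending on the worst-case diameter of the replacement relators, and one must iterate the push $O(n)$ times because $D$ has diameter linear in $n$. Tracking these bounds carefully and chaining them with the quadratic filling in $\tX$ yields the advertised polynomial estimate; the exponent $96$ appears as a product of the three constants above, and any improvement to the bound would require either a sharper replacement cost at critical cells or a non-trivial amortisation across successive pushes, both of which I would expect to be the real combinatorial bottleneck of the argument.
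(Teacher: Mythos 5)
Your overall framework---fill in $\tX$, then push the diagram into the fibre paying a multiplicative cost per level---is indeed the strategy the paper uses, but there is a quantitative error at the first step that is fatal to the claimed polynomial bound. You propose to use hyperbolicity of $\tX^{(1)}$ to produce a van Kampen diagram for $\gamma$ of area $O(n^2)$ and diameter $O(n)$, and then observe that each push across a level multiplies the cell count by a fixed constant $C$ (in the paper's notation, $C_3 = 3T+1$). But if the $h$-range of the initial diagram is $O(n)$, you need $O(n)$ pushes, and iterating a multiplicative blow-up $O(n)$ times gives $C^{O(n)}$, which is exponential, not polynomial; no ``careful tracking'' salvages this. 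The essential point, going back to Gersten--Short and reproduced as \Cref{lem:hyperbolic-area-radius-pairs}, is that $\delta$-hyperbolicity yields a filling whose \emph{radius is logarithmic} in $n$ (with area $O(n\log n)$, via the dyadic/Farey-style subdivision of the loop into geodesic triangles). Only then does the iterated multiplication give $C_3^{O(\log n)} = n^{O(\log_2 C_3)}$, a polynomial; concretely the exponent is $1 + \delta \log_2(C_3)$, which with $\delta = 16$ (from the $16$-thin triangles established via the $4$-point constant) and $C_3 = 61$ gives $96$, not a ``product of three constants.''

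A smaller point: the stated reason for non-hyperbolicity of $H$ is incorrect. $H$ cannot contain any $\Z^2$, undistorted or not, because $H \leq G$ and the hyperbolic group $G$ contains no $\Z^2$. Brady's $H$ is non-hyperbolic because it is of type $F_2$ but not $F_3$ (whereas hyperbolic groups are $F_\infty$). Your conclusion---that the quadratic lower bound follows from the Gromov gap for non-hyperbolic finitely presented groups---is nevertheless correct once the non-hyperbolicity is justified properly.
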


The groups $G$ and $H$ in \Cref{thm:Main-Dehn} were constructed by Brady~\cite{Bra-99}. They provided the first example of a hyperbolic group $G$ with a non-hyperbolic finitely presented subgroup $H$. The group $G$ is a fundamental group of a non-positively curved cube complex $X$. A key step in our proof of \Cref{thm:Main-Dehn}, which seems of independent interest, is the calculation of the hyperbolicity constant of the $1$-skeleton of the universal cover of $X$.
\begin{theorem}\label{thm:Main-hyperbolicity-constant}
    Let $\tX^{(1)}$ be the $1$-skeleton of the universal cover of the cube complex $X$. Its optimal hyperbolicity constant for the $4$-point condition is $4$.
\end{theorem}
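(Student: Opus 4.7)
The $4$-point condition with constant $\delta$ requires that, for any four points, the two largest of the three pairwise sums
$d(x_1,x_2)+d(x_3,x_4)$, $d(x_1,x_3)+d(x_2,x_4)$, $d(x_1,x_4)+d(x_2,x_3)$
differ by at most $2\delta$. To establish $\delta = 4$ as the optimal constant for $\tX^{(1)}$, the plan is to prove two matching statements: an upper bound, that every $4$-tuple of vertices of $\tX^{(1)}$ satisfies the condition with $\delta = 4$, and a lower bound, that some explicit $4$-tuple realises slack exactly $8$.

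The lower bound is the concrete half. I would exhibit an explicit $4$-tuple of vertices in $\tX^{(1)}$ whose six pairwise combinatorial distances produce slack exactly $8$ in the $4$-point inequality. The natural candidates sit in regions of $\tX$ whose local combinatorics are furthest from tree-like, for instance near the branching locus of Brady's cell structure or at the boundary of a large flat-like subcomplex where multiple $2$-cells meet along edges. Verification then reduces to a direct calculation of the six pairwise geodesic distances and comparison of the three sums.

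The upper bound is harder and exploits the $\mathrm{CAT}(0)$ cube complex structure of $\tX$. In such a complex the combinatorial distance between two vertices equals the number of hyperplanes separating them, so each pairwise sum decomposes according to how individual hyperplanes split the $4$-tuple. Hyperplanes with a $3$-$1$ split contribute equally to all three pairwise sums and so do not affect the slack; only hyperplanes with a $2$-$2$ split matter, and these come in three types indexed by the three perfect matchings of $\{1,2,3,4\}$, contributing $(0,2,2)$, $(2,0,2)$ or $(2,2,0)$ to the ordered triple of sums. Writing $A \geq B \geq C$ for the numbers of hyperplanes of the three types, the slack in the $4$-point inequality is exactly $2(B-C)$. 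Since hyperplanes of distinct $2$-$2$ split types must pairwise cross, bounding $B-C \leq 4$ becomes a combinatorial question about how many mutually crossing hyperplane families of different split types can coexist in $\tX$, which is controlled by the finitely many link types appearing in Brady's complex.

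The principal obstacle is sharpness. A rough finite upper bound on $\delta$ follows abstractly from hyperbolicity of $\tX^{(1)}$, but pinning down the exact value $\delta = 4$ requires a tight analysis of the worst-case configurations that matches the lower-bound witness. This will likely demand a careful case analysis exploiting the specific symmetries of Brady's construction and the structure of its vertex links, verifying in each configuration that no more than $4$ independent $2$-$2$ split hyperplanes of a given type can accumulate beyond those of the other two types.
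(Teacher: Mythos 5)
Your high-level strategy (show every $4$-tuple has slack $\leq 8$, exhibit one with slack exactly $8$) is of course the right shape, and your hyperplane calculus is sound: only $2$-$2$-split hyperplanes contribute to the slack, they contribute $(0,2,2)$, $(2,0,2)$, $(2,2,0)$, distinct types must pairwise cross, and with $A\geq B\geq C$ the slack is $2(B-C)$. This is in fact the machinery behind the result the paper invokes as a black box: Chepoi--Dragan--Estellon--Habib--Vax\`es (\Cref{thm:CDEHV}, combined with Chepoi's \Cref{thm-Chepoi}) shows that for a median graph the optimal $4$-point constant equals the largest $C$ for which an isometrically embedded $C\times C$ square exists. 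The paper therefore does not re-derive this translation; it simply reduces the theorem to the purely geometric statement that $\tX^{(1)}$ contains an embedded $4$-square but no embedded $5$-square. What you have sketched is essentially a re-proof of (one direction of) that characterisation, so in this sense your route is a genuinely different, more self-contained one. What you lose is that the square formulation makes the remaining combinatorics much more tractable: "no isometrically embedded $5$-square" is a concrete, local condition that can be checked by developing rectangles outward from a vertex.

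The genuine gap is that the hard part---the quantitative bound itself---is not actually carried out. You acknowledge that bounding $B-C\leq 4$ "will likely demand a careful case analysis", but as stated the plan to do so is vague in a way that hides the difficulty. Note that pairwise crossing of distinct split types does not by itself bound $B-C$: one can have $A=B$ large with $C=0$, and nothing in the crossing condition alone forbids this. What forbids it in $\tX$ is the ramification structure of Brady's complex, and extracting a bound from it requires exactly the analysis the paper does in \Cref{prop:no-5-squares}: a classification of $2$-squares by the link type of their centre (\Cref{lem:4-cycles}), a proof that no isometrically embedded rectangle contains an interior ramified subgraph (\Cref{prop:no-interior-ramification}), and then a case-by-case development showing every rectangle large enough to contain a $5$-square is forced to create a forbidden interior ramification. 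None of this is present in your sketch, and it is the bulk of the proof. The lower bound also needs an explicit witness; the paper produces one by exhibiting a concrete embedded $4$-square (\Cref{fig:4-square}), whose four corners realise slack $8$. Without that explicit configuration your lower bound is a plan rather than a proof.
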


We will now provide further context and motivation for our results.

\subsection{Hyperbolic groups and their non-hyperbolic subgroups}

Hyperbolic groups were introduced by Gromov~\cite{Gro-87} and their study has formed a very active area of geometric group theory ever since. We refer to \Cref{sec:hyperbolicity} for their precise definition. Generalising fundamental groups of closed manifolds with negative sectional curvature, hyperbolic groups form a large class of groups which share many strong properties. For instance, they have solvable word problem, solvable conjugacy problem, satisfy a strong version of the Tits Alternative, do not contain $\mathbb{Z}^2$, and are of finiteness type $F_{\infty}$ \cite{BriHae-99}. The strong properties shared by all hyperbolic groups raise the natural problem if they are inherited by their subgroups.

Recently, many examples of non-hyperbolic finitely presented subgroups of hyperbolic groups have been constructed, and a key tool in their construction have been finiteness properties. We call a group $G$ of {\em finiteness type $F_n$} if it can be realised as a fundamental group of an aspherical CW-complex $X$ with finitely many cells of dimension $\leq n$. We say that $G$ is of {\em type $F_{\infty}$} if it is $F_n$ for all $n\geq 0$ and of {\em type $F$} if $X$ can be chosen to be finite. The finiteness properties $F_n$ generalise finite generation, which is equivalent to $F_1$, and finite presentability, which is equivalent to $F_2$. 

Since hyperbolic groups are of type $F_{\infty}$, any subgroup that is not of type $F_n$ for some $n$ is non-hyperbolic. The first finitely generated non-hyperbolic subgroups of hyperbolic groups were constructed by Rips~\cite{Rip-82} and their non-hyperbolicity follows from this criterion, as they are not finitely presented. In 1999 Brady~\cite{Bra-99} produced the first example of a non-hyperbolic finitely presented subgroup of a hyperbolic group, which is the group covered by \Cref{thm:Main-Dehn,thm:Main-hyperbolicity-constant}. His group is of type $F_2$, but not $F_3$. More examples of this kind were obtained by Lodha~\cite{Lod-18} and Kropholler~\cite{Kro-21}. Recently, many further examples of finitely presented non-hyperbolic subgroups of hyperbolic groups were constructed by Llosa Isenrich, Martelli and Py, and by Llosa Isenrich and Py, showing that they can be of type $F_n$ and not $F_{n+1}$ for every $n\geq 1$ \cite{LIMP-21,LloPy-22}. Moreover, the first example of a non-hyperbolic subgroup of a hyperbolic group of type $F$ was constructed by Italiano, Martelli and Migliorini~\cite{IMM-22,IMM-21}. All examples mentioned so far arise as kernels of morphisms from a hyperbolic group onto $\mathbb{Z}$. Very recently, Kropholler and Llosa Isenrich~\cite{KroLlo-23} and Llosa Isenrich and Py~\cite{LloPy-23} constructed examples arising as kernels of homomorphisms onto $\mathbb{Z}^k$ for $k\geq 2$.

\subsection{Dehn functions of subgroups of hyperbolic groups}

The new pool of examples of non-hyperbolic finitely presented subgroups of hyperbolic groups naturally raises the question whether it allows us to obtain a deeper understanding of their geometry. Since hyperbolic groups are characterised by the linearity of their Dehn function, all of the above examples have superlinear Dehn function. This raises the problem if one can attain a more precise understanding of the Dehn functions of subgroups of hyperbolic groups. Before explaining this further, we recall the definition of the Dehn function of a finitely presented group. 

Let $F_S$ be the free group on the set of free generators $S$, and let $\ell$ be the word length on $F_S$ with respect to $S$. For a finitely presented group $G$ with presentation $\mathcal{P}=\left\langle S\mid R\right\rangle$ we say that a word $w$ in $S^{\pm 1}$ is {\em null-homotopic} if it represents the trivial element of $G$.  We define its area as
\[
    \Area_{\mathcal{P}}(w):= \min\Bigl\{ k\, \Big\mid\, w=_{F_{S}} \prod_{i=1}^k u_i \cdot r_i^{\epsilon_i} \cdot u_i^{-1},\, u_i\in F_S,\, r_i\in R,\, \epsilon_i=\pm 1\Big\}.
\]
The {\em Dehn function} $\delta_{\mathcal{P}}$ of $G$ with respect to the presentation $\mathcal{P}$ is then defined as
\[
    \delta_{\mathcal{P}}(n):=\max\bigl\{\Area_{\mathcal{P}}(w)\,\big\mid\, w=_{G} 1,~ \ell(w)\leq n \bigr\}.
\]
It measures the maximal number of conjugates of relations from $R$ that are required to detect if a word of length at most $n$ in $S^{\pm 1}$ represents the trivial element in $G$. The Dehn function is invariant under quasi-isometries up to a certain asymptotic equivalence of functions (see \Cref{sec:BG-Dehn}). Thus, its equivalence class is independent of the choice of a finite presentation, and we usually speak of the Dehn function $\delta_G$ of the group $G$, whenever this does not lead to any confusion.

While the problem of understanding Dehn functions of finitely presented subgroups of hyperbolic groups was first raised in 1999 by Brady in the context of his example~\cite{Bra-99}, our understanding is still very limited. It has been shown by Gersten and Short~\cite{GerSho-02} that every subgroup of a hyperbolic group $G$ arising as kernel $H$ of a homomorphism onto $\mathbb{Z}$ has Dehn function bounded above by some polynomial function $p$. This has recently been generalised to kernels of homomorphisms onto arbitrary finitely generated abelian groups by Llosa Isenrich~\cite{Llo-24}. As a consequence, all known examples of finitely presented subgroups of hyperbolic groups have a polynomially bounded Dehn function. However, to the best of our knowledge, there is not a single example for which we have an explicit upper bound on the degree of this polynomial. \Cref{thm:Main-Dehn} gives the first such bound.

\subsection{Idea of proof of \texorpdfstring{\Cref{thm:Main-Dehn}}{}}

 A careful analysis of Gersten--Short's proof in~\cite{GerSho-02} reveals that the degree of the polynomial upper bound $p$ depends on several constants related to the hyperbolicity constant $\delta$ in the $\delta$-thinness characterisation of hyperbolicity and the growth of relations of $H$ under the $\Z$-action by conjugation on them. The explicit nature of these constants suggests that for a concrete example one should be able to determine an explicit upper bound on the degree of $p$. However, this requires determining the constants associated with a suitable explicit presentation of $H$. In practice, this seems rather challenging. 
 
 To circumvent this challenge and prove \Cref{thm:Main-Dehn}, we pursue a different approach, which relies on adapting a more geometric point of view on Gersten--Short's work~\cite{GerSho-02}. Instead of filling conjugates of algebraic relations, we geometrically push fillings in ${\rm CAT}(0)$ cube complexes. This is a technique that was first used by Abrams, Brady, Dani, Duchin and Young~\cite{ABDDY-13} to give a geometric proof of Dison's~\cite{Dis-08} quartic upper bound on Dehn functions of Bestvina--Brady groups. Our geometric approach produces an upper bound on the Dehn function of Brady's group in terms of two constants associated with the geometry of the corresponding ${\rm CAT}(0)$ cube complex. The first is related to the filling area of loops in the descending and ascending links of a suitable height function on the cube complex. The second is the hyperbolicity constant for the $1$-skeleton of the cube complex. 

While the constant coming from the ascending and descending links follows directly from their description, determining the hyperbolicity constant requires more work. A key step in our proof of \Cref{thm:Main-Dehn} is thus the proof of \Cref{thm:Main-hyperbolicity-constant}, which provides the precise value for the hyperbolicity constant of the $1$-skeleton of the ${\rm CAT}(0)$ cube complex constructed by Brady in~\cite{Bra-99} with respect to the $4$-point condition for hyperbolicity (see \Cref{sec:hyperbolicity} for its definition). The result, as well as its proof, seems of independent interest and may have consequences beyond its application to Dehn functions. 

We finish this section by mentioning that our techniques also apply to other examples of finitely presented non-hyperbolic subgroups of hyperbolic groups constructed from ${\rm CAT}(0)$ cube complexes, such as the ones of Lodha~\cite{Lod-18} and Kropholler~\cite{Kro-21} (see \Cref{rmk:Lodha-Kropholler-examples}). 

\subsection{Structure}
In \Cref{sec:Background} we introduce all relevant notions and examples. We start by giving further details on Dehn functions in \Cref{sec:BG-Dehn} and on hyperbolic spaces and groups in \Cref{sec:hyperbolicity}. We then introduce ${\rm CAT}(0)$ cube complexes and Morse theory in \Cref{sec:Cat-0-cube-complexes}, before discussing Brady's example in \Cref{sec:Bradys-group}. In \Cref{sec:geometric-Gersten-Short} we explain our geometric version of Gersten--Short's work~\cite{GerSho-02}. This provides a proof of \Cref{thm:Main-Dehn} modulo \Cref{thm:Main-hyperbolicity-constant}. In \Cref{sec:upper-bound-on-hyperbolicity-constant} we then determine the hyperbolicity constant for the cube complex in Brady's example, proving \Cref{thm:Main-hyperbolicity-constant}.

\subsection{Acknowledgements}
We are very grateful to Noel Brady for many insightful conversations during our work on this article that, in particular, provided inspiration for our proof of the upper bound on the hyperbolicity constant.

\section{Background}\label{sec:Background}
In this section we will introduce all preliminary notions and examples that we will require subsequently, including Dehn functions of CW-complexes, hyperbolic spaces and groups, Morse theory on ${\rm CAT}(0)$ cube complexes and Brady's example of a non-hyperbolic finitely presented subgroup of a hyperbolic group.

\subsection{Dehn functions}\label{sec:BG-Dehn}
In the introduction we introduced the Dehn function $\delta_{\mathcal{P}}$ of a finitely presented group $G$ with respect to a finite presentation $\mathcal{P}=\left\langle S\mid R \right \rangle$. The Dehn function is independent of the choice of finite presentation for $G$ up to the equivalence $\asymp$ of non-decreasing functions $f,~g : \mathbb{N}\to \mathbb{N}$ defined by $f\preccurlyeq g$ if there is a $C>0$ such that for all $n\in \mathbb{N}$ we have $f(n)\leq Cg(Cn+C)+Cn+C$. 

There are also more geometric points of view on Dehn functions in terms of optimal isoperimetric functions. One such approach is via van Kampen diagrams. We will briefly summarise the most important facts and results here and refer the reader to~\cite{Alo-90, Bri-02, BRS-07} and the references therein for further details and definitions. 

From now on we will always equip the $1$-skeleton $X^{(1)}$ of a CW-complex $X$ with the length metric obtained by identifying every edge with the unit interval. By restriction, this also equips $X^{(0)}$ with a metric. We start by recalling that a van Kampen diagram $D$ for an edge loop $\gamma : S^1 \to X^{(1)}$ in a CW-complex $X$ is a planar compact $2$-complex $D$ together with a cellular map $f: D \to X$ such that $f|_{\partial D}=\gamma$. The area $\Area(D)$ of $D$ is the number of $2$-cells of $D$. The radius $\Radius(D)$ is the maximal distance of a vertex of $D$ from the boundary $\partial D$.

One can define the Dehn function $\delta_{\mathcal{P}}(n)$ geometrically using van Kampen diagrams. That is, $\delta_{\mathcal{P}}(n)$ is the minimal area of a van Kampen diagram for any edge loop of length $\leq n$. From this geometric point of view, we can extend the notion of Dehn function to any simply connected CW-complex $X$ as follows. Given a null-homotopic loop $\gamma$ in $X^{(1)}$ we can define its area as
\[
    \Area(\gamma):= \min \left\{\Area(D) \mid D \mbox{ is a van Kampen diagram for } \gamma \mbox{ in } X^{(2)}\right\} \in \mathbb{N}.
\]
The {\em Dehn function} $\delta_X$ of $X$ is then defined as 
\[
    \delta_X(n):= \max \left\{\Area(\gamma)\mid \gamma \mbox{ is a loop of length} \leq n \mbox{ in } X^{(1)}\right\}\in \mathbb{N}\cup \left\{\infty\right\}.
\]

\begin{definition}
    A simply connected CW-complex $X$ is a \emph{Dehn complex} if $\delta_X(n)$ is finite for all $n$ and the supremum over the possible number of edges in the boundary of a $2$-cell in $X$ is finite.
\end{definition}

An important class of Dehn complexes are simply connected locally finite cell complexes equipped with a group action which is cocompact on $X^{(2)}$. In particular, every simply connected cell complex equipped with a properly discontinuous cocompact group action is a Dehn complex.

We say that two Dehn complexes $X$ and $Y$ are \emph{quasi-isometric} if the metric spaces $X^{(0)}$ and $Y^{(0)}$ are quasi-isometric. This generalises quasi-isometry of groups, as the universal cover of a presentation complex for $G$ with respect to a presentation $\mathcal{P}=\left\langle S \mid R \right\rangle$ is a Dehn complex with $X^{(0)}$ isometric to $G$ equipped with the word metric induced by $S$.

The main result of~\cite{Alo-90} is.
\begin{theorem}[{Alonso~\cite[Theorem 2.2]{Alo-90}}]\label{thm:Alonso}
    Let $X$ and $Y$ be Dehn complexes. If $X$ and $Y$ are quasi-isometric, then $\delta_X\asymp \delta_Y$.
\end{theorem}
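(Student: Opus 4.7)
The plan is to prove the quasi-isometry invariance of Dehn functions for Dehn complexes by a classical \emph{push--fill--pull} argument, in which the Dehn complex hypothesis enters in a crucial way. Let $f\colon X^{(0)}\to Y^{(0)}$ be a $(K,C)$-quasi-isometry with $(K,C)$-quasi-inverse $g\colon Y^{(0)}\to X^{(0)}$, let $C'>0$ be a constant such that $d_X(v,gf(v))\le C'$ for every vertex $v$, and let $M_X,M_Y$ be the respective uniform bounds on the number of edges in the boundary of a $2$-cell, both finite by the Dehn complex hypothesis. Set $L:=K+C$. By symmetry it suffices to prove $\delta_X\preccurlyeq \delta_Y$.

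First I would extend $f$ to a combinatorial map $\widetilde f\colon X^{(1)}\to Y^{(1)}$: for each edge $[u,v]$ of $X$, select a combinatorial path in $Y^{(1)}$ from $f(u)$ to $f(v)$ of length at most $L$; this is possible since $d_Y(f(u),f(v))\le L$ whenever $d_X(u,v)=1$. Extend $g$ to $\widetilde g\colon Y^{(1)}\to X^{(1)}$ analogously. Given an edge loop $\gamma$ in $X^{(1)}$ with $\ell(\gamma)\le n$, its push-forward $\widetilde f\circ\gamma$ is an edge loop in $Y^{(1)}$ of length at most $Ln$, so the hypothesis furnishes a van Kampen diagram $D\to Y$ with $\Area(D)\le \delta_Y(Ln)$.

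Now I would pull $D$ back into $X$. For every $2$-cell $c$ of $D$, the boundary loop $\partial c$ has length at most $M_Y$ in $Y^{(1)}$, so its image $\widetilde g(\partial c)$ is a null-homotopic edge loop in $X^{(1)}$ of length at most $LM_Y$; this loop bounds a van Kampen sub-diagram in $X^{(2)}$ of area at most the constant $A:=\delta_X(LM_Y)$. Gluing one such sub-diagram for every $2$-cell along the edges of $\widetilde g(D^{(1)})$ gives a planar diagram in $X$ of area at most $A\cdot\delta_Y(Ln)$ whose boundary is $\widetilde g\widetilde f\gamma$, not $\gamma$ itself. To close the gap, for each vertex $v_i$ of $\gamma$ choose a path $\alpha_i$ in $X^{(1)}$ of length at most $C'$ from $v_i$ to $\widetilde g\widetilde f(v_i)$. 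The original edge $[v_i,v_{i+1}]$ of $\gamma$, the path $\widetilde g\widetilde f([v_i,v_{i+1}])$ of length at most $L^2$ in $X$, and the arcs $\alpha_i,\alpha_{i+1}$ form a null-homotopic ``rectangle'' of length at most $N:=1+L^2+2C'$, each of which can be filled with area at most $B:=\delta_X(N)$. Pasting the $n$ rectangles onto the annular frontier of the pulled-back diagram produces a van Kampen diagram for $\gamma$ of area at most $A\,\delta_Y(Ln)+Bn$, yielding $\delta_X\preccurlyeq\delta_Y$.

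The main obstacle is the combinatorial bookkeeping required to verify that the assembled object is a genuine planar van Kampen diagram rather than a singular disc: one must pick planar embeddings of the sub-fillings coherently along the image of $D^{(1)}$, and deal with degenerate cases where $\widetilde g$ collapses an edge of $D$ to a point or identifies the boundaries of adjacent $2$-cells, which is handled by subdividing edges or inserting collapsed $1$-cells where necessary. Conceptually, the Dehn complex hypothesis plays two indispensable roles: the finiteness of $\delta_X,\delta_Y$ at every integer is what makes the constants $A,B$ actually exist, and the uniform bound $M_Y$ on the boundary length of $2$-cells of $Y$ is precisely what lets the pull-back cost per $2$-cell be absorbed into a constant independent of $n$.
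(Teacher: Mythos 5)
Your argument is correct and is essentially the standard push--fill--pull proof; the paper does not prove this statement itself but cites Alonso, whose argument proceeds along exactly these lines, with the Dehn complex hypothesis entering precisely as you say (finiteness of $\delta_X$ at each integer to produce the constants $A$ and $B$, and the uniform bound $M_Y$ on boundary lengths of $2$-cells to make the per-cell pull-back cost uniform). The planarity and degeneracy bookkeeping you flag is indeed the only delicate point, and your proposed remedies (coherent choices along $D^{(1)}$, subdivision, collapsed cells) are the standard ones.
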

A straightforward consequence of Theorem \ref{thm:Alonso} and the above discussion is the following result, which we will use in the proof of Theorem \ref{thm:Main-Dehn}.
\begin{corollary}\label{cor:Alonso}
    Let $G$ be a finitely presented group, and let $\mathcal{P}$ be a finite presentation. If $G$ acts properly discontinuously and cocompactly on a simply connected cell complex $X$, then $\delta_X\asymp \delta_{\mathcal{P}}$.
\end{corollary}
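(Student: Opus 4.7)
The plan is to produce, from the presentation $\mathcal{P}$, a second Dehn complex whose Dehn function is $\delta_{\mathcal{P}}$ and which is quasi-isometric to $X$, and then invoke \Cref{thm:Alonso}. Concretely, let $Y$ be the standard presentation $2$-complex of $\mathcal{P}=\langle S \mid R\rangle$ (one vertex, one oriented loop per generator, one $2$-cell attached along each relator) and let $\tilde Y$ be its universal cover. Since $G$ acts properly discontinuously and cocompactly on the simply connected complex $\tilde Y$, the paragraph following the definition of Dehn complex in the excerpt tells me that $\tilde Y$ is a Dehn complex and that $\tilde Y^{(0)}$ is isometric to $(G, d_S)$. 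A standard translation between algebraic reductions $w=_{F_S}\prod u_i r_i^{\epsilon_i} u_i^{-1}$ and van Kampen diagrams identifies $\Area_{\mathcal{P}}(w)$ with the geometric area of the edge loop traced by $w$ in $\tilde Y^{(1)}$, so $\delta_{\tilde Y}(n)=\delta_{\mathcal{P}}(n)$ for every $n$.

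By the same remark from the excerpt, the hypothesis that $G$ acts properly discontinuously and cocompactly on the simply connected cell complex $X$ makes $X$ itself a Dehn complex. It remains to check that $X$ and $\tilde Y$ are quasi-isometric as Dehn complexes, i.e.\ that $X^{(0)}$ is quasi-isometric to $\tilde Y^{(0)}$. This is the content of the \v{S}varc--Milnor lemma: a properly discontinuous cocompact action of $G$ on a geodesic (pseudo-)metric space makes any orbit map $g\mapsto g\cdot x_0$ a quasi-isometry from $(G,d_S)$ to the space. Applying this to a vertex of $X$ and to a vertex of $\tilde Y$ gives quasi-isometries $X^{(0)}\sim (G,d_S)\sim \tilde Y^{(0)}$, hence $X$ and $\tilde Y$ are quasi-isometric Dehn complexes.

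Then \Cref{thm:Alonso} yields $\delta_X\asymp \delta_{\tilde Y}=\delta_{\mathcal{P}}$, which is the desired conclusion. There is no real obstacle here; the only points that deserve a line each are the identification $\delta_{\tilde Y}=\delta_{\mathcal{P}}$ (standard van Kampen diagram dictionary) and the fact that the \v{S}varc--Milnor quasi-isometry restricts to the vertex set, which is automatic since the orbit $G\cdot x_0$ sits inside $X^{(0)}$ and is coarsely dense there by cocompactness.
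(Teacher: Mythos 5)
Your argument is correct and is exactly the route the paper has in mind: the surrounding discussion already records that the universal cover of the presentation complex is a Dehn complex whose vertex set is isometric to $(G,d_S)$ and that a properly discontinuous cocompact action makes $X$ a Dehn complex, so \v{S}varc--Milnor plus \Cref{thm:Alonso} finishes it. The paper simply labels this ``a straightforward consequence'' without writing it out, and your proposal fills in those details in the intended way.
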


\subsection{Hyperbolic spaces and groups}\label{sec:hyperbolicity}

The hyperbolicity of a metric space can be defined in multiple equivalent ways. When transitioning between them, the associated hyperbolicity constant of the space can change. In this work we will require several of these equivalent definitions, and it will be important that we have explicit upper bounds on the associated hyperbolicity constants. We thus introduce these different definitions of hyperbolicity and discuss the relation between their associated constants. We refer to Bridson--Haefliger~\cite{BriHae-99} and de la Harpe--Ghys~\cite{dlHGhys-90} for a detailed discussion of hyperbolicity and proofs of the following results.

Let $(X,d)$ be a geodesic metric space. For $x\in X$ we define the \emph{Gromov product} of $y,z\in X$ with respect to $x$ as 
\[
    (y\cdot z)_x:= \frac{1}{2} \left\{d(y,x)+d(z,x)-d(y,z)\right\}.
\]

\begin{definition}\label{def:delta-hyp} 
Let $\delta\ge0$.
    A metric space $X$ is said to be \emph{$\delta$-hyperbolic} if for all $w, x, y, z\in X$
    \begin{equation}\label{eqn:Gromov-hyperbolicity}
        (x\cdot y)_w\geq \min\left\{(x\cdot z)_w, (y\cdot z)_w\right\} - \delta.
    \end{equation}
\end{definition}

\begin{remark}
    One can show (see~\cite[1.1.B]{Gro-87}) that if there is a $\delta\geq 0$ such that $X$ satisfies inequality \eqref{eqn:Gromov-hyperbolicity} for arbitrary $x,y,z\in X$ and for some fixed $w_0\in X$, then $X$ is $2\delta$-hyperbolic. Thus, it suffices to check inequality \eqref{eqn:Gromov-hyperbolicity} for a fixed point $w_0\in X$.
\end{remark}

We say that $X$ \emph{satisfies the $4$-point $\delta$-condition} if there is a $\delta\geq 0$ such that for all $w, x, y, z\in X$ the following inequality holds:
\begin{equation} \label{eqn:4-point-condition}
    d(x,y)+d(z,w) \leq \max\left\{d(x,z)+d(y,w), d(y,z)+d(x,w)\right\} + 2\delta.
\end{equation}
To explain this condition, we think of the points $x, y, z, w$ as the four vertices of a tetrahedron and we call $0\leq S\leq M\leq L$ the sums of the lengths of the three pairs of opposite edges. Then inequality \eqref{eqn:4-point-condition} is equivalent to $L-M\leq 2\delta$. 

A straightforward computation shows the equivalence of \eqref{eqn:Gromov-hyperbolicity} and \eqref{eqn:4-point-condition}, eliminating the asymmetry of the roles of points $x,y,z,w$ in Definition~\ref{def:delta-hyp}. The following lemma records this.

\begin{lemma}
    A metric space $X$ is $\delta$-hyperbolic if and only if $X$ satisfies the $4$-point $\delta$-condition.
\end{lemma}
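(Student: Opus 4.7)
The plan is to reduce both statements to pure inequalities among the six pairwise distances of the four points and show that they are term-by-term identical after a linear rearrangement. Since the condition involves quantifiers ``for all $w,x,y,z$,'' we can do the computation for fixed arbitrary points.

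First, I would substitute the definition of the Gromov product into \eqref{eqn:Gromov-hyperbolicity}. Multiplying the inequality $(x\cdot y)_w \ge \min\{(x\cdot z)_w,(y\cdot z)_w\} - \delta$ by $2$ and expanding gives
\[
d(x,w)+d(y,w)-d(x,y) + 2\delta \ge \min\bigl\{d(x,w)+d(z,w)-d(x,z),\, d(y,w)+d(z,w)-d(y,z)\bigr\}.
\]
Symmetrically, I would rewrite the $4$-point condition \eqref{eqn:4-point-condition} as
\[
\max\bigl\{d(x,z)+d(y,w),\, d(y,z)+d(x,w)\bigr\} \ge d(x,y)+d(z,w) - 2\delta.
\]

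The key algebraic move is to add the same constant $-d(x,w)-d(y,w)-d(z,w)$ to every term of the $4$-point inequality. Inside the $\max$ on the left, the two entries become $d(x,z)-d(x,w)-d(z,w) = -2(x\cdot z)_w$ and $d(y,z)-d(y,w)-d(z,w) = -2(y\cdot z)_w$. On the right, the constants combine to give $-2(x\cdot y)_w - 2\delta$. Using $\max\{-a,-b\} = -\min\{a,b\}$, the resulting inequality is
\[
-2\min\bigl\{(x\cdot z)_w,(y\cdot z)_w\bigr\} \ge -2(x\cdot y)_w - 2\delta,
\]
which, after dividing by $-2$ and flipping the inequality, is exactly \eqref{eqn:Gromov-hyperbolicity}. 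Since every step is reversible, the two conditions are equivalent for the same constant $\delta$.

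I do not anticipate any real obstacle here: once the right linear combination of distances is added to each side, the equivalence is immediate, and no case distinction or geometric input is required. The one mildly delicate point is just to keep track of signs when passing between $\max$ and $\min$ via negation, which I would emphasise in writing up the argument.
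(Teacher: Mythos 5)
Your proof is correct and carries out exactly the ``straightforward computation'' that the paper alludes to but does not spell out: clear the denominator, shift both sides of the $4$-point inequality by $-d(x,w)-d(y,w)-d(z,w)$, and observe that the two conditions become identical after passing between $\max$ and $\min$ by negation. I verified the three bookkeeping identities $d(x,z)-d(x,w)-d(z,w)=-2(x\cdot z)_w$, $d(y,z)-d(y,w)-d(z,w)=-2(y\cdot z)_w$, and $d(x,y)-d(x,w)-d(y,w)=-2(x\cdot y)_w$, and the chain of equivalences is reversible, so the same constant $\delta$ works in both directions.
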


Alternative definitions of hyperbolicity are in terms of slim triangles,  thin triangles, and insizes. They provide a more intuitive view on the nature of hyperbolic spaces. Here we will require the definitions in terms of thin triangles and insizes that we shall now give. For the definition in terms of slim triangles and the proof of its equivalence to the definitions given here we refer to~\cite[III.H.1]{BriHae-99}.

A \emph{tripod} is a metric tree with three vertices of valence one and one vertex of valence three. We extend this definition to allow some edges to have length zero, that is, we allow all metric trees with at most three edges and at most one vertex of valence greater than one. We call a tripod that does not have a vertex of degree three a degenerate tripod.
For a geodesic triangle $\Delta$ in $X$ we can define a (possibly degenerate) tripod $T_{\Delta}$ with the same side lengths and a map $\chi_{\Delta}: \Delta\to T_{\Delta}$ which is an isometry on the sides. We say that $\Delta$ is \emph{$\delta$-thin} if for each $t\in T_{\Delta}$ the set $\chi_{\Delta}^{-1}(t)$ has diameter $\leq \delta$. We say that $X$ \emph{has $\delta$-thin triangles} if each geodesic triangle in $X$ is $\delta$-thin. 

With the same notation as above, the centre $o_{\Delta}\in T_{\Delta}$ of the tripod $T_{\Delta}$ is the unique central vertex of $T_{\Delta}$, if $T_{\Delta}$ is a non-degenerate tripod, and the unique vertex lying on the geodesic between the other two vertices, if $T_{\Delta}$ is degenerate. The \emph{insize} of $\Delta$ is the diameter of $\chi_{\Delta}^{-1}(o_{\Delta})$; we denote it by ${\rm insize}(\Delta)$.

We will later make use of the following quantitative equivalences between $\delta$-hyperbolicity and the $\delta$-thin triangle condition.

\begin{lemma}
    If $X$ has $\delta$-thin triangles, then $X$ is $\delta$-hyperbolic.
\end{lemma}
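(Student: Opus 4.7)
The plan is to prove the Gromov product inequality of Definition~\ref{def:delta-hyp} directly using the tripod comparison maps furnished by the $\delta$-thin triangles hypothesis. Fix a basepoint $w$ and three points $x,y,z\in X$. By symmetry in $x$ and $y$, we may assume $(x\cdot z)_w\leq (y\cdot z)_w$; set $a:=(x\cdot z)_w$. We must prove $(x\cdot y)_w\geq a-\delta$, which then yields \eqref{eqn:Gromov-hyperbolicity}.

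First I would choose, on fixed geodesics from $w$, the points $p_x\in[w,x]$, $p_y\in[w,y]$ and $p_z\in[w,z]$ each at distance $a$ from $w$. Note that this is possible because $a\leq \min\{d(w,x),d(w,y),d(w,z)\}$; each Gromov product is bounded by the relevant distances. The next step is to apply the $\delta$-thin triangle hypothesis to the triangle $\Delta_1=[w,x,z]$: since the $w$-leg of its tripod $T_{\Delta_1}$ has length exactly $(x\cdot z)_w=a$, the comparison map $\chi_{\Delta_1}$ sends both $p_x$ and $p_z$ to the tripod centre $o_{\Delta_1}$, so $d(p_x,p_z)\leq\delta$. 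Applying the same reasoning to $\Delta_2=[w,y,z]$, and crucially using the assumption $a\leq(y\cdot z)_w$ so that $p_y$ and $p_z$ both lie in the closed $w$-branch of $T_{\Delta_2}$ at parameter $a$, their images agree under $\chi_{\Delta_2}$, so $d(p_y,p_z)\leq\delta$.

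Concatenating via the triangle inequality gives
\[
d(x,y)\leq d(x,p_x)+d(p_x,p_z)+d(p_z,p_y)+d(p_y,y)\leq (d(w,x)-a)+2\delta+(d(w,y)-a).
\]
Rearranging,
\[
2(x\cdot y)_w = d(w,x)+d(w,y)-d(x,y)\geq 2a-2\delta,
\]
so $(x\cdot y)_w\geq a-\delta = \min\{(x\cdot z)_w,(y\cdot z)_w\}-\delta$, which is the desired $\delta$-hyperbolicity.

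The only real subtlety is the parameter matching in Step~2: one has to verify that the chosen parameter $a$ lies in the $w$-branch of the second tripod $T_{\Delta_2}$, which is why the reduction $a=(x\cdot z)_w\leq (y\cdot z)_w$ is forced upon us. Once this is secured, the distance losses $\delta+\delta=2\delta$ in the concatenation convert, via the factor of $\tfrac12$ in the definition of the Gromov product, into the single $\delta$ loss demanded by the conclusion; it is this bookkeeping that yields the sharp constant rather than $2\delta$.
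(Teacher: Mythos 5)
Your proof is correct, and it takes a genuinely more self-contained route than the paper, which reduces to two cited steps (first ``$\delta$-thin if and only if insize $\le\delta$,'' then ``insize $\le\delta$ implies $\delta$-hyperbolic'' via the proofs of Propositions~III.H.1.17 and III.H.1.22 in~\cite{BriHae-99}). You instead verify the Gromov product inequality directly from the tripod comparison maps, applying the thin-triangles hypothesis to the two triangles $[w,x,z]$ and $[w,y,z]$ to produce the chain $p_x,p_z,p_y$ and conclude via the triangle inequality. The parameter-matching point you flag --- that the reduction $a=(x\cdot z)_w\leq(y\cdot z)_w$ guarantees $p_y$ and $p_z$ map to the same point of the $w$-branch of $T_{\Delta_2}$ --- is exactly the nontrivial observation, and the $2\delta$ loss halves to $\delta$ as claimed. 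One small imprecision: to place $p_y$ on $[w,y]$ you need $a\leq d(w,y)$, but $(x\cdot z)_w$ is not automatically bounded by $d(w,y)$; the correct chain is $a=(x\cdot z)_w\leq(y\cdot z)_w\leq d(w,y)$, using the WLOG assumption. Note also that where you use the thin condition at an interior point of the $w$-leg of $T_{\Delta_2}$, a minor variant of your argument (take $p_y',p_z'$ at distance $(y\cdot z)_w$ from $w$ and insert the segment $[p_z,p_z']$ of length $(y\cdot z)_w-a$ into the chain) uses only the insize bound and still gives the constant $\delta$, which is closer in spirit to the route the paper cites.
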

\begin{proof}
    One first shows that $X$ satisfies ${\rm insize}(\Delta)\leq \delta$ for every geodesic triangle if and only if $X$ has $\delta$-thin triangles. The ``if'' implication in this assertion is obvious and the ``only if'' direction follows from the proof of~\cite[Proposition III.H.1.17]{BriHae-99}.
    The assertion then follows from the proof of~\cite[Proposition III.H.1.22]{BriHae-99}.
\end{proof}

\begin{lemma}\label{lem:thin-hyperbolicity}
    If $X$ is $\delta$-hyperbolic, then $X$ has $4\delta$-thin triangles.
\end{lemma}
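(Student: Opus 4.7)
The plan is to bound the diameter of $\chi_\Delta^{-1}(t)$ for every point $t$ on the comparison tripod $T_\Delta$ of an arbitrary geodesic triangle $\Delta$ with vertices $x,y,z$ in $X$. Since $\chi_\Delta$ is an isometry on each side of $\Delta$, the preimage $\chi_\Delta^{-1}(t)$ consists of at most three points, one on each side of $\Delta$ that reaches height $t$ in the tripod. The statement reduces to bounding pairwise distances between such points, and by the symmetry of the tripod picture it suffices to handle one representative case.

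First I would set up the reduction. For a point $t \in T_\Delta$ lying on the leg toward $x$, its preimage meets only the two sides $[x,y]$ and $[x,z]$, in points $p$ and $q$ respectively with $d(x,p)=d(x,q)=s$ for some $s \leq (y\cdot z)_x$. If $t$ is the central vertex of $T_\Delta$, the preimage is a third point $r\in[y,z]$, but pairwise estimates $d(p,r), d(q,r)\leq 4\delta$ will follow from the same argument with the base point moved to $y$ or $z$. So the main task is to prove
\begin{equation*}
d(p,q)\leq 4\delta \quad \text{whenever } p\in[x,y],\ q\in[x,z],\ d(x,p)=d(x,q)=s\leq (y\cdot z)_x.
\end{equation*}

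The key computation is a double application of the Gromov $4$-point inequality \eqref{eqn:Gromov-hyperbolicity} based at $w=x$. Because $p$ lies on the geodesic $[x,y]$, the relation $d(x,y)=d(x,p)+d(p,y)$ gives $(p\cdot y)_x = s$, and similarly $(q\cdot z)_x = s$. Setting $s':=(y\cdot z)_x\geq s$, inequality \eqref{eqn:Gromov-hyperbolicity} applied first to the triple $(y,z,q)$ yields
\begin{equation*}
(y\cdot q)_x \geq \min\bigl\{(y\cdot z)_x,\ (z\cdot q)_x\bigr\}-\delta = \min\{s',s\}-\delta = s-\delta,
\end{equation*}
and then applied to the triple $(p,y,q)$ yields
\begin{equation*}
(p\cdot q)_x \geq \min\bigl\{(p\cdot y)_x,\ (y\cdot q)_x\bigr\}-\delta \geq \min\{s,\, s-\delta\}-\delta = s-2\delta.
\end{equation*}
Unwinding the definition of the Gromov product then gives $d(p,q) = d(x,p)+d(x,q) - 2(p\cdot q)_x \leq 2s - 2(s-2\delta) = 4\delta$, as required.

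Finally I would assemble the three symmetric versions of this estimate (around $x$, around $y$, around $z$) to cover every possible location of $t$ on the tripod, including the central vertex, thereby showing $\operatorname{insize}(\Delta)\leq 4\delta$ and more generally $\operatorname{diam}\chi_\Delta^{-1}(t)\leq 4\delta$ for all $t\in T_\Delta$. There is no real obstacle here beyond careful bookkeeping: everything follows from the identity for Gromov products along geodesics together with two uses of \eqref{eqn:Gromov-hyperbolicity}. The only mild subtlety is remembering that \eqref{eqn:Gromov-hyperbolicity} was only postulated for a single fixed basepoint $w_0$, but by the remark following Definition~\ref{def:delta-hyp} this costs at most a factor of $2$ in $\delta$, which would be absorbed if one prefers to state the intermediate step at $w_0$ and then specialise; alternatively one uses the equivalent $4$-point condition \eqref{eqn:4-point-condition} directly, which is symmetric in all four points.
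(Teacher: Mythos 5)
Your proof is correct, but it takes a genuinely different route from the paper. The paper disposes of this lemma with a one-line citation to~\cite[Proposition 2.21]{dlHGhys-90}, whereas you supply a short self-contained argument: you use the identity $(p\cdot y)_x = s$ for a point $p$ at distance $s$ from $x$ along a geodesic $[x,y]$, and then apply the four-point inequality \eqref{eqn:Gromov-hyperbolicity} twice (to $(y,z,q)$ and then $(p,y,q)$, both based at $x$) to conclude $(p\cdot q)_x \geq s-2\delta$, whence $d(p,q) = 2s - 2(p\cdot q)_x \leq 4\delta$. The computation is clean and the reduction to the three symmetric cases around $x$, $y$, $z$ is handled appropriately, including the central vertex where one needs all three pairwise bounds. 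What your version buys is transparency: the reader sees exactly where each $\delta$ enters and why the constant is $4\delta$ rather than $3\delta$ or $\delta$, which is precisely the kind of quantitative bookkeeping this paper cares about elsewhere.

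One small remark on your final paragraph: the worry about the basepoint is unfounded. Definition~\ref{def:delta-hyp} in the paper already quantifies over all $w\in X$, so your use of basepoint $x$ (and by symmetry $y$, $z$) is directly licensed by the definition without any loss of a factor of $2$. The remark following the definition goes in the opposite direction, namely that checking the inequality at a single $w_0$ suffices to deduce $2\delta$-hyperbolicity for all basepoints. Your hedge is harmless, but the factor-of-$2$ discussion can simply be deleted.
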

\begin{proof}
    This follows from~\cite[Proposition 2.21]{dlHGhys-90}.
\end{proof}

It would be interesting to know if  one can get better quantitative bounds between the hyperbolicity constants than the ones provided by these results, when restricting to the case of $1$-skeleta of cube complexes, as this would improve the upper bound in Theorem \ref{thm:Main-Dehn}.

We conclude this discussion with the definition of hyperbolicity for groups.

\begin{definition}
    A finitely generated group $G$ is \emph{hyperbolic} if it acts properly discontinuously and cocompactly by isometries on a geodesic $\delta$-hyperbolic metric space, for some $\delta\ge0$.
\end{definition}

\subsection{\texorpdfstring{${\rm CAT}(0)$}{CAT(0)} cube complexes and Morse theory}\label{sec:Cat-0-cube-complexes}

A cube complex $X$ is an affine cell complex such that all of its cells are isometric to unit cubes $\left[0,1\right]^n$. It can be endowed with a quotient metric, which restricts to the standard Euclidean metric on every cube. We call $X$ finite-dimensional if there is a uniform upper bound on the dimension of its cubes and locally finite if every $k$-cube only belongs to finitely many $(k+1)$-cubes. We refer to~\cite{BriHae-99} for a detailed introduction to cube complexes. Here we will only summarise the most important results required for our work.

A \emph{height map} (or \emph{Morse function}) $h: X\to \mathbb{R}$ on a cube complex $X$ is a map with the following properties:
\begin{enumerate}
    \item the restriction of $h$ to every cube of $X$ is affine linear; and
    \item the restriction of $h$ to every $1$-cube in $X$ is a homeomorphism onto an interval $\left[n,n+1\right]\subset \mathbb{R}$ for some integer $n\in \mathbb{Z}$.
\end{enumerate}

More generally, we call a map $h:X\to S^1=\left[0,1\right]/\,0\sim 1$ a \emph{height map} (or \emph{Morse function}) if its lift $\widetilde{h}:\widetilde{X}\to \mathbb{R}$ to universal covers is a height map. 

A height map equips the edges of $X$ with a natural orientation induced by the natural orientation on $\mathbb{R}$. Conversely, height maps are often defined by equipping the edges of a cube complex with suitable orientations. 

The link $\lk(v)$ of vertex $v\in X$ is the simplicial complex defined by intersecting $X$ with a sphere of radius $0<\epsilon < 1$. The $m$-cells of $\lk(v)$ are in natural correspondence with $(m+1)$-cubes with vertex $v$; here we allow that a cube contributes more than one $m$-cell if several of its vertices are identified with $v$ in $X$. In particular, the vertices of $\lk(v)$ are in correspondence with the (oriented) edges of $X$ with vertex $v$. 

If $X$ is equipped with a height map, then we use the induced orientation to define the ascending link $\lk^{\uparrow}(v)$ (respectively the descending link $\lk^{\downarrow}(v)$) of a vertex $v\in X$ to be the full subcomplex induced by all vertices of $\lk(v)$ corresponding to outgoing edges (respectively to incoming edges) in $v$.

The main result of Bestvina--Brady's Morse theory~\cite{BesBra-97} is a characterisation of finiteness properties of kernels of homomorphisms induced by Morse functions. Let $X$ be a finite cube complex and let $G=\pi_1(X)$ be its fundamental group. Assume that $h:X\to S^1$ is a Morse function that induces a non-trivial homomorphism $\phi: \pi_1(X)\to \pi_1(S^1)=\mathbb{Z}$ on fundamental groups. 

\begin{theorem}[{\cite[Theorem 4.7]{Bra-99}}]\label{thm:BB-Morse}
    Let $h$ and $\phi$ be as above, let $H:=\ker(\phi)$ and assume that $\widetilde{X}$ is contractible. Then the following hold:
    \begin{enumerate}
        \item if for every vertex $v\in X$ the reduced homology of the ascending and descending links $\lk^{\uparrow}(v)$ and $\lk^{\downarrow}(v)$ is zero over a unital ring $R$ in dimension $\leq n-1$, then $H$ is of type $FP_{n}(R)$;
         \item if for every vertex $v\in X$ the ascending and descending links $\lk^{\uparrow}(v)$ and $\lk^{\downarrow}(v)$ are $(n-1)$-connected, then $H$ is of type $F_{n}$; and
         \item if (1) holds and, in addition, the reduced homology of all ascending and descending links of $X$ is non-zero in dimension $n$ and zero in dimension $n+1$, then $H$ is of type $FP_{n}(R)$ and not of type $FP_{n+1}(R)$.
    \end{enumerate}
\end{theorem}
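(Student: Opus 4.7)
The plan is to lift $h$ to a Morse function $\widetilde{h}\colon \widetilde{X} \to \mathbb{R}$ and carry out a Bestvina--Brady style Morse-theoretic analysis of the (super)level sets of $\widetilde{h}$. The crucial observation is that $H = \ker\phi$ is precisely the subgroup of $G$ that preserves $\widetilde{h}$, while $G/H \cong \mathbb{Z}$ acts by unit translations of height; hence $H$ acts freely and cocompactly on every horizontal strip $\widetilde{X}_{[a,b]} := \widetilde{h}^{-1}([a,b])$ with $b - a \geq 1$. The task therefore reduces to bounding the connectivity of such strips in terms of the connectivity of the ascending and descending links, and then invoking the cocompact $H$-action.

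The first step is the cube-complex Morse lemma: for regular $t \in \mathbb{R}\setminus\mathbb{Z}$ the homotopy type of $\widetilde{X}_{\leq t}$ depends only on $\lfloor t\rfloor$, and on crossing an integer level $n_0$ from below, $\widetilde{X}_{\leq n_0+\epsilon}$ is obtained from $\widetilde{X}_{\leq n_0-\epsilon}$ by attaching, at each vertex $v \in \widetilde{h}^{-1}(n_0)$, the descending closed star of $v$ -- a cone on $\lk^{\downarrow}(v)$ glued along $\lk^{\downarrow}(v)$. If every $\lk^{\downarrow}(v)$ is $(n-1)$-connected, then each such cone attachment has relative cells only in dimensions $\geq n+1$, so every inclusion $\widetilde{X}_{\leq s}\hookrightarrow\widetilde{X}_{\leq t}$ induces an isomorphism on $\pi_i$ for $i \leq n-1$; since $\widetilde{X}$ is contractible, each $\widetilde{X}_{\leq t}$ is itself $(n-1)$-connected, and similarly for superlevel sets under the ascending link hypothesis. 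Running the analogous Morse argument \emph{within} a strip $\widetilde{X}_{[a,b]}$, where new cubes are attached through cones on both ascending and descending links, yields that $\widetilde{X}_{[a,b]}$ is $(n-1)$-connected (respectively $(n-1)$-acyclic over $R$, in the setting of (1)). Passing to the quotient $\widetilde{X}_{[0,1]}/H$ produces a finite CW-complex that is a $K(H,1)$ through dimension $n$ in the sense appropriate for the assertion, and Brown's criterion then yields $FP_n(R)$ for (1) and $F_n$ for (2).

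The main obstacle is the sharpness assertion (3), that $H$ fails to be $FP_{n+1}(R)$. The positive bounds are formal Morse bookkeeping once the local lemma is in hand, but to rule out $FP_{n+1}(R)$ one must produce an infinite family of independent homological obstructions. My plan is to exploit the hypothesis that each link has non-zero reduced $R$-homology in degree $n$ and zero in degree $n+1$: the degree-$n$ classes of $\lk^{\downarrow}(v)$ are killed upon attaching the descending cone but contribute non-trivial relative classes in $H_{n+1}(\widetilde{X}_{\leq n_0+\epsilon},\widetilde{X}_{\leq n_0-\epsilon}; R)$, and the vanishing of the links in degree $n+1$ prevents these classes from being filled by contributions from lower critical levels. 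Translating by the $\mathbb{Z} = G/H$ action produces infinitely many linearly independent such classes, and packaging them through Brown's criterion -- or, equivalently, through the Lyndon--Hochschild--Serre spectral sequence for $1 \to H \to G \to \mathbb{Z} \to 1$ -- shows that $H_n(H; R)$ cannot be finitely generated as an $R$-module, ruling out $FP_{n+1}(R)$ and closing (3).
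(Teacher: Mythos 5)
The paper does not prove this theorem; it is quoted directly from Brady~\cite{Bra-99}, who in turn adapts the Morse-theoretic framework of Bestvina and Brady~\cite{BesBra-97}. Your proposal correctly reconstructs that approach for parts (1) and (2): lift the circle-valued Morse function to $\widetilde{X}$, observe that $H$ acts freely and cocompactly on each strip $\widetilde{X}_{[a,b]}$, use the cube-complex Morse lemma to express level crossings as attachments of cones on ascending/descending links, deduce $(n-1)$-connectivity (resp.\ $(n-1)$-acyclicity) of strips from the connectivity hypotheses on the links together with contractibility of $\widetilde{X}$, and then invoke Brown's criterion.

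Your sketch of part (3) is in the right spirit but is imprecise about the mechanism and has the direction of the argument somewhat muddled. When one enlarges a strip $\widetilde{X}_{[-m,m]}\hookrightarrow\widetilde{X}_{[-m',m']}$, excision gives relative homology $\bigoplus_v\widetilde{H}_{*-1}(\lk^{\updownarrow}(v);R)$ over the newly swallowed vertices. With links $(n-1)$-acyclic, the long exact sequence of the pair shows the induced map on $H_n$ is \emph{surjective}, with kernel controlled by the images of $\bigoplus_v\widetilde{H}_n(\lk^{\updownarrow}(v);R)$ and potential new classes obstructed by $\bigoplus_v\widetilde{H}_{n+1}$. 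Brown's criterion for $FP_{n+1}(R)$ requires the directed system $\{\widetilde{H}_n(\widetilde{X}_{[-m,m]};R)\}_m$ of $RH$-modules to be essentially trivial, and the content of (3) is precisely to use the nonvanishing of link $\widetilde{H}_n$ and vanishing of link $\widetilde{H}_{n+1}$ to show it is not. Your phrase about classes ``killed upon attaching the descending cone'' and then ``filled by contributions from lower critical levels'' does not match this structure, and the claim that $H_n(H;R)$ itself fails to be finitely generated is also not what Brown's criterion directly delivers. These are genuine gaps in (3), though they are filled in the references the paper is citing.
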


The finiteness conditions $FP_n(R)$ in Theorem \ref{thm:BB-Morse} are algebraic analogues of the homotopical finiteness properties $F_n$, defined via projective resolutions. We omit their precise definition here, since we will not require it. 

Finally, we call a cube complex $X$ \emph{locally ${\rm CAT}(0)$} if it is finite-dimensional and the link of each of its vertices is a flag complex, and \emph{${\rm CAT}(0)$} if, in addition, $X$ is simply connected. By Gromov's Link Condition, this is equivalent to the classical definition of ${\rm CAT}(0)$ (respectively locally ${\rm CAT}(0)$) in terms of comparison triangles in Euclidean space. Since we will not make direct use of either of the two conditions here, we refer to~\cite{BriHae-99} for further details.

\subsection{Brady's example}\label{sec:Bradys-group}

In~\cite{Bra-99}, Brady constructed the first example of a finitely presented non-hyperbolic subgroup of a hyperbolic group. We summarise his construction, as this will be the main example to which we will apply our methods. 

Let $\Theta ^3$ be a direct product of three copies of the graph $\Theta$ with two vertices and four edges between them. We orient two edges of $\Theta$ in one direction and the other two in the other. This orientation provides a natural map $f: \Theta ^3 \to S^1$ to which one can apply Bestvina--Brady Morse theory. The ascending and descending links of $f$ are octahedra (obtained as joins of three $0$-spheres). Thus, $\ker(f_{\ast}: \pi_1(\Theta^3)\to \pi_1(S^1))$ is of type $F_2$ and not $F_3$. Brady then takes a ramified cover $X\to \Theta^3$ of this product with ramification locus consisting of three disjoint copies of $\Theta$ (one aligned with each factor) and proves that one can still apply Morse theory to the induced map $g: X\to S^1$ ($X$ inherits a natural cubulation from $\Theta^3$). The ascending and descending links are now octahedra or suspensions of a $20$-cycle, depending on whether the corresponding vertex of $X$ lies outside or inside the ramification locus. It thus follows from Bestvina--Brady Morse theory that $\ker(g_{\ast})$ is $F_2$ and not $F_3$. Moreover, Brady proves that $X$ is a $\delta$-hyperbolic space and thus $\pi_1(X)$ is a hyperbolic group, hence producing an example of a non-hyperbolic finitely presented subgroup of a hyperbolic group.

For our proof it will be important to have a precise understanding of the links, as well as the ascending and descending links, of the cube complex $X$ with respect to the map $g$. We thus recall some results from~\cite{Bra-99}. Before stating them, we require a further definition. A \emph{ramified} vertex, edge or subgraph of $X$ (resp. $\tX$) will be a subset of the ramification locus of the cover $X\to \Theta^3$ (resp. $\tX\to \Theta^3$). If a vertex, edge or subgraph is not ramified, we call it \emph{unramified}.

\begin{lemma}\label{lem:4-cycles}
Let $v\in X$ be a vertex. Its link has the following form
\begin{enumerate}
    \item if $v$ is unramified, then $\lk(v)=F\ast F\ast F$ for $F$ a discrete set with $4$ elements;
    \item if $v$ is ramified, then (up to permutation of factors) $\lk(v)=\Gamma\ast F$ for $F$ a discrete set with $4$ elements and $\Gamma\to F\ast F$ a $5$-fold covering such that the preimage of every $4$-cycle in $F\ast F$ in $\Gamma$ is a $20$-cycle. In particular, $\Gamma$ contains no $4$-cycles. 
\end{enumerate}
\end{lemma}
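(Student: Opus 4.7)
The plan is to exploit the local structure of the ramified covering $p: X \to \Theta^3$ together with the product structure of $\Theta^3$. Since every vertex of $\Theta$ has valence four, its link is a discrete four-point set $F$, and links distribute over products, so $\lk_{\Theta^3}(\bar v)=F*F*F$ at every vertex $\bar v\in\Theta^3$, with one join factor per coordinate direction. For part (1), an unramified vertex $v$ admits a cubical neighbourhood on which $p$ restricts to a combinatorial isomorphism onto its image; hence $\lk_X(v)\cong\lk_{\Theta^3}(p(v))=F*F*F$.

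For part (2), let $v$ be ramified and, after permuting the three $\Theta$-factors of $\Theta^3$, assume that the component of the ramification locus of $X$ through $v$ maps to the copy of $\Theta$ aligned with the first factor. Brady's construction makes $p$ restrict to a homeomorphism on each component of the ramification locus, so the four edges of $X$ at $v$ lying along this component map bijectively to the four edges of $\Theta$ at $p(v)$ and contribute a four-point join factor $F$ to $\lk_X(v)$. In a small neighbourhood of $v$ the map $p$ factors as the product of the identity along the ramification axis with a $5$-fold cyclic branched cover of $\Theta\times\Theta$ branched at the single vertex $p(v)$. Passing to links, the transverse factor becomes a connected $5$-fold covering $\Gamma\to F*F = K_{4,4}$, and the product structure yields the join decomposition $\lk_X(v)=\Gamma*F$.

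To prove the 4-cycle statement, I would analyse the covering $\Gamma\to K_{4,4}$ via the local monodromy of the branched cover: $\Gamma\to K_{4,4}$ is classified by a surjection $\phi:\pi_1(K_{4,4})\twoheadrightarrow\Z/5$ recording the deck transformation incurred when winding around the cone point of the local cone on $K_{4,4}$ at $p(v)$. A $4$-cycle $\gamma\subset K_{4,4}$ chooses two edges from each factor of $\Theta\times\Theta$ and corresponds to a small loop that makes one turn around $p(v)$; by direct inspection of Brady's explicit $\Z/5$-cover, $\phi(\gamma)$ is a generator of $\Z/5$. Consequently $\gamma$ has monodromy of order exactly five and lifts to a single connected cycle of length $4\cdot 5=20$, and $\Gamma$ contains no $4$-cycle. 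The main obstacle is this last verification, which is essentially combinatorial: one must unwind Brady's explicit recipe for the $5$-fold branched cover and check that his assignment of deck-transformations gives a nontrivial contribution modulo $5$ on every $4$-cycle of $K_{4,4}$.
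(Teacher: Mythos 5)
The paper's proof is very short: it simply cites Brady~\cite{Bra-99} for parts (1) and (2) and supplies a one-line argument for the ``in particular'' clause (every embedded $4$-cycle in a graph is $\pi_1$-embedded, so a $4$-cycle in $\Gamma$ would map to an embedded $4$-cycle in $F\ast F$, whose preimage would then have to be a $20$-cycle, a contradiction). Your proposal instead attempts to reconstruct Brady's argument from scratch: the unramified case via the local isomorphism, the ramified case via the local product decomposition and the branched-cover structure, and the $4$-cycle statement via monodromy $\phi\colon\pi_1(K_{4,4})\twoheadrightarrow\Z/5$. The outline is plausible, but you yourself flag the genuine gap at the end: you never carry out the combinatorial verification that Brady's explicit $\Z/5$-cover has $\phi(\gamma)$ a generator for every $4$-cycle $\gamma\subset K_{4,4}$. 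Without that computation the lemma is not actually proved; the paper sidesteps it entirely by citing Brady's verification.

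There is also a smaller logical gap in the last sentence of your argument. From ``every $4$-cycle in $K_{4,4}$ lifts to a single $20$-cycle'' you assert ``and $\Gamma$ contains no $4$-cycle,'' but this is not an immediate consequence. You still need to argue that a hypothetical embedded $4$-cycle $C\subset\Gamma$ must project to an embedded $4$-cycle in $K_{4,4}$ (for instance: the covering map has no backtracking, and a non-backtracking closed $4$-walk in $K_{4,4}$ is automatically an embedded $4$-cycle since $K_{4,4}$ is bipartite with no multiple edges), at which point $C$ would lie inside a $20$-cycle, which is impossible. The paper packages this step cleanly via the observation that embedded $4$-cycles in graphs are $\pi_1$-embedded; your write-up should make the analogous deduction explicit rather than treating it as automatic.
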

\begin{proof}
    This is proved in~\cite{Bra-99}, after observing that the ``in particular''-part of (2) follows because every embedded $4$-cycle in a graph is $\pi_1$-embedded. Thus, a $4$-cycle in $\Gamma$ would map to an embedded $4$-cycle in $F\ast F$, which would constitute a contradiction. 
\end{proof}

\begin{remark}\label{rmk:4-cycles}
    An analogous result holds for the ascending and descending links of $v$ with respect to $g$, where we replace $F$ by the two-point subsets $F^{\uparrow/\downarrow}:=\lk^{\uparrow/\downarrow}(v) \cap F \subset F$ corresponding to the outgoing/incoming edges, and $\Gamma$ by the preimages $\Gamma^{\uparrow/\downarrow}$ of the subgraphs $F^{\uparrow/\downarrow}\ast F^{\uparrow/\downarrow}$ under the covering map $\Gamma \to F\ast F$. Importantly, the subgraphs $\Gamma^{\uparrow/\downarrow}$ will be connected (in fact they consist of a single $20$-cycle). In particular, all ascending and descending links are spheres of diameter two.
\end{remark}

\begin{remark}
    The graph $\Gamma$ in Lemma~\ref{lem:4-cycles} will contain $k$-cycles with $k<20$. Indeed, it is not hard to see that every $20$-cycle will intersect at least one other $20$-cycle non-trivially. This intersection will create shortcuts between vertices in the $20$-cycle. For examples of cube complexes with larger links such as the ones in~\cite{Lod-18,Kro-21} such intersections will also appear in the ascending and descending links.
\end{remark}

\section{A geometric version of a Theorem of Gersten and Short}
\label{sec:geometric-Gersten-Short}

In this section we will prove a geometric version of a result by Gersten and Short about Dehn functions of cocyclic subgroups of hyperbolic groups~\cite{GerSho-02} that provides area estimates for fillings in level sets of cube complexes with respect to a suitable height function.

\subsection{A theorem of Gersten and Short}
\label{sec:Initial-filling}
We call a subgroup $H$ of a group $G$ \emph{cocyclic} if there is a surjective homomorphism $\phi: G\to \Z$ such that $H=\ker(\phi)$. In~\cite{GerSho-02} Gersten and Short prove the following result.
\begin{theorem}\label{thm:GerSho}
    Let $H\leq G$ be a cocyclic finitely presented subgroup of a hyperbolic group $G$. Then there is a polynomial function $p$ such that $\delta_H(n)\leq p(n)$ for all $n\in \mathbb{N}$.
\end{theorem}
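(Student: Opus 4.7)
The plan is to bound the Dehn function of $H$ by combining the linear isoperimetric inequality available in the hyperbolic group $G$ with a controlled procedure for transferring a van Kampen filling in $G$ into one in $H$. First I would fix a finite symmetric generating set $S$ of $G$ and an element $t\in G$ with $\phi(t)=1$. A standard consequence of finite presentability of $H$ together with hyperbolicity of $G$ is that $H$ admits a finite generating set of the form
\[
    T = \bigl\{\, t^{k}\,s\,t^{-k} \;\big|\; s\in S,\; |k|\leq N\,\bigr\}
\]
for some integer $N$, and a finite presentation $\mathcal{Q}=\langle T\mid R_H\rangle$ whose relators have bounded length in $T$. Unpacking this presentation yields, for every $k\in\Z$ and every $s\in S$, an expression of the conjugate $t^{k}st^{-k}$ as a word over $T$ whose length and $\mathcal{Q}$-area grow at most polynomially in $|k|$.

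Given a null-homotopic word $w$ of length $n$ over $T$, the first step is to expand $w$ into a word $\hat w$ over $S^{\pm 1}$ of length $\leq(2N+1)n$ that still represents the identity in $G$. Since $G$ is hyperbolic, its linear isoperimetric inequality produces a van Kampen diagram $D$ for $\hat w$ over a fixed presentation of $G$ with $\Area(D)=O(n)$ and $O(n)$ vertices. The next step, and the technical core of the argument, is to control the $\phi$-heights of the vertices of $D$. The boundary vertices of $D$ have $\phi$-values in an interval of length $O(n)$, and using $\delta$-hyperbolicity via the thin triangle condition of \Cref{lem:thin-hyperbolicity}, $D$ can be rechosen so that no interior vertex has $|\phi|$-value exceeding $Cn$, for a constant $C$ depending only on $\delta$ and the translation length of $t$. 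One achieves this by realising $D$ as a union of thin corridors running between consecutive level sets of $\phi$, exploiting that these level sets are quasiconvex in $G$ and that geodesics joining points of comparable height cannot stray far from them.

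The final step is the transfer. Each $2$-cell of $D$ is labelled by a $G$-relator $r$ based at some vertex of height $k$ with $|k|\leq Cn$. Since $\phi(r)=0$, conjugating $r$ by $t^{-k}$ turns it into a bounded-length word in $H$ that has bounded $\mathcal{Q}$-area, while undoing the conjugation through the polynomial-cost expressions for $t^{\pm k}st^{\mp k}$ from the setup inflates this by at most a polynomial factor in $|k|=O(n)$. Summing these polynomial costs over the $O(n)$ cells of $D$ yields a polynomial $p$ with $\delta_H(n)\leq p(n)$, whose degree is governed by the height-growth exponent of the conjugation relations, the hyperbolicity constant $\delta$ and the translation length of $t$.

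The main obstacle will be the height-control step: a careless filling of $\hat w$ in $G$ can wander to heights vastly exceeding $n$ inside $D$, which would translate into super-polynomial blow-up during transfer. The quantitative use of hyperbolicity to straighten $D$ into a diagram of height $O(n)$ is thus the principal source of the explicit polynomial degree in the final bound. Because extracting a usable degree from this general strategy is delicate in concrete presentations, this in particular motivates replacing the above algebraic transfer by the geometric pushing of fillings in the universal cover of a ${\rm CAT}(0)$ cube complex, which is the point of view developed in the remainder of this section.
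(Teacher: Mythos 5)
The paper does not prove \Cref{thm:GerSho} itself; it cites Gersten and Short~\cite{GerSho-02} and then develops a geometric analogue rather than reproducing their argument. Your outline has the right shape at a high level (fill in $G$, control heights, transfer to $H$), but it contains a quantitative error that would cause the proof to collapse as written. The assertion that $t^{k}st^{-k}$ has an expression over $T$ of length and $\mathcal{Q}$-area ``at most polynomial in $|k|$'' is unjustified and, in this setting, false: since $H$ is not quasiconvex in $G$ (indeed $H$ is not hyperbolic), it is exponentially distorted, and the a priori cost from finite generation and normality is only $|t^{k}\tau t^{-k}|_{T}\leq M^{|k|}$ for some constant $M>1$, obtained by rewriting one conjugation by $t$ at a time at bounded multiplicative cost. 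This exponential per-level cost is precisely the factor $C_3$ per height reduction in \Cref{prop:geometric-Gersten-Short}. Combined with your proposed $O(n)$ height bound for the filling $D$, the resulting transfer cost would be of order $M^{n}$, which is nowhere near polynomial.

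The crucial ingredient your proposal omits is the \emph{logarithmic radius} filling available in hyperbolic spaces, \cite[Lemma 2.2]{GerSho-02}, restated geometrically as \Cref{lem:hyperbolic-area-radius-pairs}: a loop of length $n$ admits a van Kampen diagram of area $O(n\log n)$ \emph{and radius} $O(\delta\log n)$. This is a strictly stronger quantitative consequence of hyperbolicity than the linear isoperimetric inequality, which is all your outline actually uses. With the heights of $D$ bounded by $O(\log n)$ instead of $O(n)$, the exponential per-level cost becomes $M^{O(\log n)}=n^{O(\log M)}$, hence polynomial, and summing over the cells of $D$ gives the claimed polynomial Dehn function. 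This exponential-in-logarithm interplay is exactly what produces the $C_3^{\delta\log_2 n}$ term in \eqref{eqn:upper-area-bound}. A secondary point: your $T=\{t^{k}st^{-k}: s\in S,\ |k|\leq N\}$ does not lie in $H$ unless $\phi(s)=0$, so the generating set needs the usual correction (e.g.\ $t^{k}s\,t^{-k-\phi(s)}$), and the boundary heights of $\hat w$ are then in fact $O(1)$, not $O(n)$; but this is cosmetic compared to the distortion issue.
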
 

As explained in the introduction, a careful study of Gersten--Short's proof of Theorem~\ref{thm:GerSho} reveals that the degree of $p$ depends on several seemingly explicit constants, including the hyperbolicity constant $\delta$ in the $\delta$-thinness characterisation of hyperbolicity and a constant measuring the growth of relations of $H$ under the $\Z$-action by conjugation. In theory, one should be able to determine them and deduce an explicit upper bound on the degree of $p$. However, finding presentations for $G$ and $H$ for which this is feasible in practice seems rather challenging.

To circumvent this challenge we will prove a geometric version of Gersten--Short's results for cube complexes. We will say that a loop $\gamma : [0,n]\to Y$ in a CW-complex $Y$, where open edges are identified with bounded open intervals, is a {\em combinatorial loop} if it is an edge loop whose restriction to every edge is parametrised by length. A key ingredient in our proof will be the following geometric version of \cite[Lemma 2.2]{GerSho-02}.

\begin{lemma}\label{lem:hyperbolic-area-radius-pairs}
    Let $Y$ be a compact CW-complex such that the $1$-skeleton $\widetilde{Y}^{(1)}$ of its universal cover $\widetilde{Y}$, equipped with the induced length metric where every edge has length $1$, has $\delta$-thin triangles. Then there are constants $C_1, C_2>0$ such that every combinatorial loop $\gamma:\left[0,n\right]\to \widetilde{Y}$ admits a filling by a van Kampen diagram of area $\leq C_1 n \log_2(n)$ and radius $\leq \delta \log_2(n) + C_2$.
\end{lemma}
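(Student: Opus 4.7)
The plan is to construct the filling by recursive dyadic subdivision of $\gamma$, filling the resulting $\delta$-thin geodesic triangles, and tracking area and radius contributions across a $\log_2 n$-deep recursion tree. Setting $K := \lceil \log_2 n \rceil$, the first step is to introduce dyadic vertices $v_{j, k} := \gamma(\lfloor jn/2^k \rfloor)$ for $k = 0, \ldots, K$ and $j = 0, \ldots, 2^k$; the level-$K$ vertices then exhaust the vertices of $\gamma$. Between consecutive dyadic vertices $v_{j, k}, v_{j+1, k}$ one picks a combinatorial geodesic $\alpha_{j, k}$ in $\widetilde{Y}^{(1)}$, of length at most $n/2^k$; at the deepest level these collapse to edges of $\gamma$ (possibly trivial), while $\alpha_{0, 0}$ is the trivial chord at the basepoint $\gamma(0)$.

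Next, for each $0 \leq k < K$ and each $j$, the three chords $\alpha_{j, k}$, $\alpha_{2j, k+1}$, $\alpha_{2j+1, k+1}$ bound a (possibly degenerate) geodesic triangle $\Delta_{j, k}$ of perimeter at most $2n/2^k$. The key step is to fill $\Delta_{j, k}$ by a ``ladder'' van Kampen diagram of area $\leq A_0 \delta \cdot n/2^k$ and combinatorial radius $\leq B_0 \delta$ for suitable constants $A_0, B_0$. Since $\widetilde{Y}^{(1)}$ has $\delta$-thin triangles, the tripod decomposition of $\Delta_{j, k}$ matches points on pairs of sides at distance at most $\delta$ in $\widetilde{Y}^{(1)}$; and since $Y$ is compact, any loop of length bounded in terms of $\delta$ admits a filling of uniformly bounded area in $\widetilde{Y}$, because $\widetilde{Y}$ is simply connected and there are only finitely many $\pi_1(Y)$-orbits of such short loops. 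Gluing these triangle fillings along shared chords yields a van Kampen diagram for $\gamma$.

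For the area estimate, summing over all levels gives
\[
    \sum_{k=0}^{K-1} 2^k \cdot A_0 \delta \cdot \frac{n}{2^k} \;=\; A_0 \delta n K \;\leq\; C_1 n \log_2 n
\]
for $C_1 = 2 A_0 \delta$ (using $K \leq \log_2 n + 1$). The radius bound proceeds by downward induction on $k$: every vertex on $\alpha_{j, k}$ lies within $(K-k)\delta + C''$ of $\gamma$ in the diagram. The base case $k = K$ is immediate since $\alpha_{j, K} \subset \gamma$, and in the inductive step the ladder filling of $\Delta_{j, k}$ places each vertex of $\alpha_{j, k}$ within $\delta$ of some vertex of $\alpha_{2j, k+1} \cup \alpha_{2j+1, k+1}$. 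Specialising to $k = 0$ gives radius $\leq \delta \log_2 n + C_2$.

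The principal obstacle is to make the ladder construction rigorous: the $\delta$-thin triangles hypothesis is a purely metric statement about $\widetilde{Y}^{(1)}$, whereas a van Kampen diagram is a combinatorial 2-complex mapped to $\widetilde{Y}$. One must pick, for each pair of tripod-matched vertices, an explicit combinatorial path of length at most $\delta$ in $\widetilde{Y}^{(1)}$, and then fill the resulting short loops of length $O(\delta)$ by 2-cells of $\widetilde{Y}$. The uniform boundedness of these small fillings -- which controls both $C_1$ and $C_2$ -- is precisely where compactness of $Y$ enters the argument.
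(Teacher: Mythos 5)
Your proposal follows essentially the same approach as the paper, which in turn cites the argument of Gersten and Short: dyadic subdivision of $\gamma$ into a binary tree of $\delta$-thin geodesic triangles, ladder fillings of each triangle with rungs of length $\leq\delta$ at area cost linear in the perimeter, and compactness of $Y$ to uniformly bound the area and radius of the finitely many $\pi_1(Y)$-orbits of small-loop fillings; the level-by-level sums give $C_1 n\log_2 n$ for the area and $\delta$ per level plus a constant for the radius. The one slight blemish is the statement of the inductive radius hypothesis ``every vertex on $\alpha_{j,k}$ lies within $(K-k)\delta + C''$ of $\gamma$'': the base case $k=K$ forces $C''=0$ for chord vertices, so $C''$ should more accurately be reserved for the vertices interior to the small fillings, which lie a bounded diagram-distance from the rungs; this is exactly the role of $C_2$ in the paper's formulation.
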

\begin{proof}
    The proof is standard and completely analogous to the proof of~\cite[Lemma 2.2]{GerSho-02}; we thus omit the details here. The main idea is to subdivide the loop $\gamma$ into geodesic triangles, such that for every $1\leq j \leq \log_2(n)$ there are $n/2^{\log_2(n)-j}$ triangles with side lengths $\simeq n/2^j$ and area $\simeq n/2^j$, as in Figure~\ref{fig:Farey-graph}. The conclusion then follows readily from the explicit linear area filling for $\delta$-thin triangles constructed in~\cite{GerSho-02}. The constants $C_1$ and $C_2$ are the maximal area and radius of a set of fixed fillings for all loops of length $\leq 3\delta + 3$ in $\widetilde{Y}$, where the compactness of $Y$ ensures their finiteness.
\end{proof}

\begin{figure}
    \begin{center}
    {
    \def\svgwidth{12cm}
    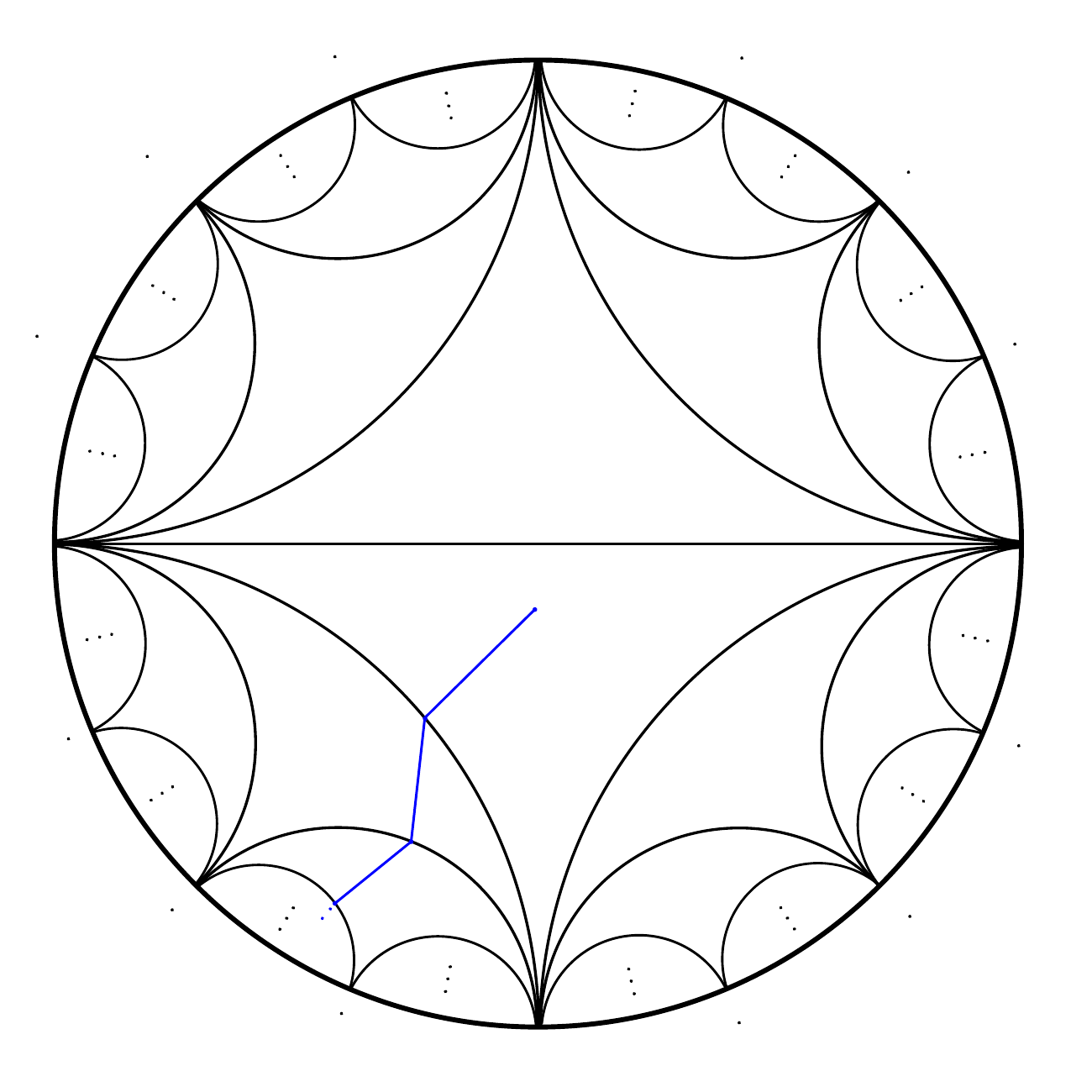}
    \end{center}
    
    \caption{The filling of a loop $\gamma$ in the proof of Lemma~\ref{lem:hyperbolic-area-radius-pairs}}
    \label{fig:Farey-graph}
\end{figure}

\subsection{Fillings in cube complexes}

We will now assume that $G$ is a hyperbolic group which is the fundamental group of a cube complex $X$. We further assume that we have a finitely presented non-hyperbolic subgroup which is the kernel of a surjective homomorphism $\phi : G=\pi_1(X) \to \mathbb{Z}=\pi_1(S^1)$, which is induced by a continuous map $g: X \to S^1$ that maps edges non-trivially onto $S^1$ and restricts to an affine map on the interior of every cube. We denote $h: \tX \to \mathbb{R}$ the lift of $g$ to universal coverings. 

\begin{remark}
    In principle, our approach can also be applied to obtain upper bounds on Dehn functions in the more general setting where we consider a sufficiently nice $\phi$-equivariant height function on a simply connected affine cell complex equipped with a free cocompact $G$-action, where $\phi : G \to \mathbb{Z}$ is some morphism. However, we restrict ourselves to the above situation for simplicity.
\end{remark}

The intersection of cubes with the integer level sets $h^{-1}(m)$, $m\in \Z$, induces a \emph{sliced cell structure} on $\tX$, which restricts to an affine cell structure on $h^{-1}(m)$. This sliced cell structure was introduced in~\cite{CarFor-17} and Figure~\ref{fig:sliced-3-cube} provides an illustration of its restriction to a $3$-cube. In the next section we will push fillings in $\tX$ into the 0-level set $\tX_0$ of $h$. For this it will turn out to be natural to work with this sliced cell structure. On the other hand we will later bound from above the $\delta>0$ such that the $1$-skeleton of $\tX$ equipped with its coarser cubular structure has $\delta$-thin triangles. The following lemma bridges the gap between these two different cell structures on $\tX$.

\begin{lemma}\label{lem:sliced-hyperbolic-area-radius-pairs}
    Assume that the $1$-skeleton of the cube complex $\tX$ equipped with its cubular cell structure has $\delta$-thin triangles. Then, there are constants $C_1,C_2>0$ such that every combinatorial loop $\gamma : [0,n]\to \tX$ in the sliced cell structure on $\tX$ admits a filling by a van Kampen diagram of area $\leq C_1 n \log_2(n)$ and radius $\leq \delta \log_2(n)+C_2$ in the sliced cell structure on $\tX$.
\end{lemma}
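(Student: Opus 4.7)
The plan is to deduce this lemma from Lemma \ref{lem:hyperbolic-area-radius-pairs} applied to $Y=X$ in its cubular structure, together with a precise comparison of the cubular and sliced cell structures on $\tX$. First I would analyse how the sliced structure refines the cubular one. Since $h$ is affine on each cube and sends every cubical edge homeomorphically onto a unit interval $[m,m+1]$, all cube vertices lie at integer heights, so an integer level set can meet a cube through its interior only when that cube has dimension $\geq 2$. A direct case analysis on $2$-cubes shows that each one is cut by exactly one integer level set, running along a diagonal. Consequently, the sliced cell structure refines the cubular one by adding precisely one diagonal edge inside each $2$-cube (and additional higher-dimensional cells, which will not concern us). In particular, the sliced $1$-skeleton of $\tX$ has the same vertex set as the cubular $1$-skeleton, every cubular edge is a sliced edge, and every new sliced edge has endpoints at cubular distance exactly $2$.

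Given a combinatorial sliced loop $\gamma\colon [0,n]\to \tX$, I would replace each of its non-cubular sliced edges by one of the two cubical length-$2$ paths around the corresponding $2$-cube, producing a cubical combinatorial loop $\gamma'$ in $\tX^{(1)}$ of length at most $2n$. Applying Lemma \ref{lem:hyperbolic-area-radius-pairs} to $Y=X$, I obtain a cubical van Kampen diagram $D'$ for $\gamma'$ of area at most $C_1'(2n)\log_2(2n)$ and radius at most $\delta\log_2(2n)+C_2'$ for some constants $C_1',C_2'>0$. Reinterpreted in the sliced structure, each cubular $2$-cell of $D'$ becomes the union of two triangular sliced $2$-cells glued along the added diagonal, so $D'$ becomes a sliced van Kampen diagram with at most twice as many $2$-cells, the same underlying disc, and boundary still equal to $\gamma'$.

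To convert this diagram into a filling of $\gamma$ itself, I would modify the boundary: for each sliced edge of $\gamma$ that had been replaced by a cubical $2$-path in $\gamma'$, the $2$-path together with the sliced edge cobound a triangular sliced $2$-cell, namely one of the two halves of the relevant $2$-cube. Attaching or removing at most one such triangle per replaced edge turns the boundary of the diagram back into $\gamma$ and contributes at most $n$ extra sliced $2$-cells overall. The resulting sliced van Kampen diagram has area at most $C_1 n\log_2 n$ for a suitable constant $C_1$, and since all three vertices of each added triangle already lie on the boundary of $D'$, no new interior vertices are created. Hence the radius stays bounded by $\delta\log_2(2n)+C_2' = \delta\log_2 n + C_2$, after absorbing the additive $\delta$ into $C_2$.

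The main difficulty I anticipate is the bookkeeping needed to verify that the boundary modifications produce a genuine planar van Kampen diagram and that the reinterpretation of cubular $2$-cells as pairs of sliced triangles is globally consistent along shared edges. Both are local checks on each $2$-cube, but they require some care. A cleaner alternative, which I would use if this approach becomes cumbersome, is to repeat the Farey-subdivision argument of Lemma \ref{lem:hyperbolic-area-radius-pairs} directly in the sliced $1$-skeleton: cocompactness of the induced $G$-action on the sliced cell structure of $\tX$ provides bounded fillings of short loops, and every sliced geodesic lies within Hausdorff distance $O(1)$ of a cubular geodesic between the same endpoints, so cubular $\delta$-thinness of triangles transfers to the sliced $1$-skeleton with only an additive constant loss that can again be absorbed into $C_2$.
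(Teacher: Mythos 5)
Your proof follows essentially the same route as the paper: replace the diagonal sliced edges of $\gamma$ by cubical length-$2$ paths to get a cubular loop $\gamma'$ of length $\leq 2n$, apply \Cref{lem:hyperbolic-area-radius-pairs} to $\gamma'$, subdivide each cubular $2$-cell of the resulting diagram into two sliced triangles, and attach one more triangle per replaced edge along the boundary to recover a filling of $\gamma$. The one small slip is the claim that ``no new interior vertices are created'': the midpoint of each replaced length-$2$ path does become interior to the new diagram (at distance $1$ from the new boundary, via one of the cubical edges of the attached triangle), so the radius can increase by $1$; this additive constant is absorbed into $C_2$, exactly as in the paper, so your final bound is unaffected.
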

\begin{proof}
    By replacing every path along a diagonal edge of a square by a path along two sides of this square, at area cost at most $n$, we can replace $\gamma$ by a combinatorial loop $\widehat{\gamma} : [0,m] \to \tX$ of length $m\leq 2n$ with image in the $1$-skeleton of the cubular cell structure on $\tX$. The assumption that the $1$-skeleton of $\tX$ has $\delta$-thin triangles and \Cref{lem:hyperbolic-area-radius-pairs} imply that there is a van Kampen diagram for $\widehat{\gamma}$ in the cubular cell structure on $\tX$ of area $\leq C_1 n \log_2(n)$ and radius $\leq \delta \log_2(n) + C_2$ for suitable constants $C_1,~C_2>0$. Observe that the edges of the cubular cell structure on $\tX$ are naturally a subset of the edges of the sliced cell structure and every square in the cubular cell structure is subdivided into two triangles in the sliced cell structure. Thus, we can construct from the van Kampen diagram for $\widehat{\gamma}$ in the cubular cell structure a van Kampen diagram for $\gamma$ in the sliced cell structure of area $\leq 2(C_1+1) n\log_2(n)$ and radius $\leq \delta \log_2(n) + C_2 + 1$. Thus, replacing the constants $C_1,~C_2>0$ from \Cref{lem:hyperbolic-area-radius-pairs} by $2(C_1+1)$ and $C_2+1$ completes the proof.
\end{proof}

\subsection{Pushing fillings into level sets}

We now adapt the proof of Theorem~\ref{thm:GerSho} to give a geometric version of it. We will retain the notation from \Cref{sec:Initial-filling}. In particular, $\tX$ will be a cube complex equipped with a Morse function $h:\tX \to \mathbb{R}$ and $\gamma:\left[0,n\right]\to \tX_0$ is a combinatorial loop of length $n$ in the $1$-skeleton of $\tX_0$. Throughout this section we will assume that $\tX$ is equipped with the sliced cell structure induced by $h$, unless explicitly mentioned otherwise. We will further assume that the ascending and descending links of $\tX$ with respect to $h$ are simply connected. Denote by $T$ the maximal filling area of a loop of length $$\leq 2\max\left\{{\rm diam}(\lk^{\downarrow}(v)), {\rm diam}(\lk^{\uparrow}(v))\mid v\in \tX^{(0)}\right\} +1$$ inside these links. The following is the main result of this section.

\begin{proposition}
    \label{prop:geometric-Gersten-Short}
    Let $\tX$ be a cube complex equipped with a Morse function $h:\tX\to \mathbb{R}$ with all ascending and descending links simply connected. Let $C_3:=3 T +1$ and assume that $\tX^{(1)}$ equipped with the cubular cell structure has $\delta$-thin triangles for some $\delta>0$. Then there is a constant $C_4>0$ that only depends on $\tX$ and $h$ such that the area of every combinatorial loop $\gamma:\left[0,n\right]\to \tX_0^{(1)}$ of length $n$ satisfies
    \[
        \Area_{\tX_0}(\gamma)\leq C_4\cdot n^{1+\delta\cdot \log_2(C_3)} \cdot \log_2(n).
    \]
\end{proposition}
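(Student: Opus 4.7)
The plan is to follow the recipe suggested by the section title: first fill $\gamma$ by a van Kampen diagram inside the whole cube complex $\tX$, then iteratively push the filling down into the level set $\tX_0$, one height step at a time, using the simple connectivity of the ascending and descending links.

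\textbf{Step 1: Initial filling in $\tX$.} Since $\gamma$ lies in $\tX_0^{(1)}\subset \tX^{(1)}$ and has combinatorial length $n$ (in the sliced cell structure), Lemma~\ref{lem:sliced-hyperbolic-area-radius-pairs} produces a van Kampen diagram $D_0\to \tX$ for $\gamma$ of area at most $C_1\, n\log_2(n)$ and radius at most $R:=\delta\log_2(n)+C_2$.

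\textbf{Step 2: One-step push via links.} I want to replace $D_0$ successively by diagrams $D_1,D_2,\dots,D_R$ where $D_k$ contains no vertex of height $>R-k$ (and, by symmetry, no vertex of height below some symmetric bound). At each step I locate a vertex $v$ of maximal height in the current diagram, look at its star, and replace it by a filling supported one level lower. Concretely, the edges and triangles of the filling incident to such a local maximum $v$ determine a loop in $\lk^{\downarrow}(v)$ of length at most $2\,\mathrm{diam}(\lk^{\downarrow}(v))+1$; by the simple-connectedness hypothesis this loop bounds a disc of area at most $T$ in $\lk^{\downarrow}(v)$, which lifts to a replacement patch in $\tX$ that only involves cells below $v$. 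A careful bookkeeping shows that each original $2$-cell at height $v$ gets replaced by at most $C_3=3T+1$ new $2$-cells (three copies corresponding to the three sides of the triangle being re-routed through the link, plus the original lower side). This is the single place where the hypothesis on links, and the explicit value of $C_3$, enters.

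\textbf{Step 3: Iterating and bookkeeping.} Because each push reduces the maximum height by one, after at most $R$ pushes all vertices lie at height $0$, and by the symmetric argument on ascending links we can perform the analogous operation to eliminate any vertex of height below $0$. Each push multiplies the current area by at most $C_3$, so the final diagram $D_R\to \tX_0$ has area at most
\[
   (C_1\, n\log_2 n)\cdot C_3^{\,R}
   \;\leq\; C_1 C_3^{C_2}\cdot n\log_2(n)\cdot C_3^{\,\delta\log_2 n}
   \;=\; C_4\cdot n^{1+\delta\log_2 C_3}\log_2 n,
\]
with $C_4:=C_1 C_3^{C_2}$, which is the desired bound.

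\textbf{Main obstacle.} The bookkeeping in Step 2 is the only genuinely delicate point: one has to verify that the local replacement at a maximum vertex $v$ is well-defined inside a van Kampen diagram (not just in $\tX$), that the resulting multiplicative area cost is indeed bounded by $C_3$ independent of the combinatorics at $v$, and that the radius estimate from Lemma~\ref{lem:sliced-hyperbolic-area-radius-pairs} really controls how many times a given $2$-cell can be pushed. All of this uses only the sliced cell structure of $\tX$, $\pi_1$-triviality of $\lk^{\uparrow/\downarrow}(v)$, and the diameter bound hidden inside $T$, so no further properties of the map $h$ are needed.
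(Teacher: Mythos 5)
Your outline follows the same strategy as the paper: use the hyperbolicity of $\tX^{(1)}$ to obtain an initial filling of area $O(n\log_2 n)$ and radius $O(\delta\log_2 n)$ (Lemma~\ref{lem:sliced-hyperbolic-area-radius-pairs}), then iteratively push the diagram towards the $0$-level set, paying a multiplicative cost of at most $C_3 = 3T+1$ per height level, with the value of $C_3$ coming from filling three loops in descending links plus one new horizontal $2$-cell. The overall exponent $1+\delta\log_2 C_3$ and the role of the radius bound are exactly as in the paper, so the architecture of your argument is correct.

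There are, however, two places where the "careful bookkeeping" you flag is not as innocuous as your write-up suggests, and one of them is a genuine gap. First, you describe the inductive step as locating a single vertex $v$ of maximal height and retracting its star through $\lk^{\downarrow}(v)$, but a vertex at the top level need not be a local maximum: the maximal level set of the diagram may contain entire edges and horizontal $2$-cells, in which case the boundary of the star of $v$ is not a loop in $\lk^{\downarrow}(v)$. The paper avoids this by pushing an entire connected component $C$ of $\alpha_k^{-1}(h^{-1}(\pm(m-k)))$ at once, first retracting its $1$-skeleton (choosing a base vertex $w_v\in\lk^{\downarrow}(v)$ for every $v\in C$ and geodesics $\mu_{(v,e)}$), and only then filling the $2$-cells; the $1+3T$ count is per triangle of $C$, with each of the three corners contributing one disc of area $\le T$. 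If you push vertex by vertex instead, each triangle can be affected several times and the per-level factor is no longer obviously $C_3$. Second, and more importantly, after retracting $C$ you must reconnect the new diagram to the part of $D_k$ at height $\le m-k-1$; this is the extension from $C$ to $N_1(C)$ in the paper (Step~3), and it creates additional horizontal $2$-cells along $\partial N_1(C)$ that are not captured by the factor $C_3$. The paper handles these by observing that they are created at most once per cell of the initial filling and that their total number is linear in the initial area (using that there are finitely many isomorphism types of links with linear filling functions); this is absorbed by replacing $C_1$ with a larger $C_1'\ge C_1$ in the inductive hypothesis, so the final constant is $C_4 = C_3^{C_2}\cdot C_1'$ rather than your $C_4 = C_1 C_3^{C_2}$. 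Your proposal omits this boundary-extension step entirely, so as written the induction does not close.
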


\begin{proof}
By Lemma~\ref{lem:sliced-hyperbolic-area-radius-pairs} there is a filling of $\gamma$ (in the sliced cell structure) by a van Kampen diagram $\alpha: D\to \tX^{(2)}$ of area $\leq C_1 n\log_2(n)$ and radius $\leq \delta \log_2(n) + C_2$. The radius bound means that $$h(\alpha(D))\subseteq \left[-\delta\log_2(n)-C_2,\delta\log_2(n) + C_2\right].$$ We call $m:=\max\bigl\{\,|h(\alpha(x))|\,\big\mid\, x\in D\,\bigr\}\leq \delta \log_2(n)+C_2$ the \emph{height of the filling} $\alpha$. 

We will inductively ``push'' this filling down into the 0-level set, reducing its height by 1 in every step. Our inductive step can be interpreted as a geometric version of the inductive reduction of $t$-rings in Gersten--Short's work~\cite{GerSho-02}. However, our actual proof relies on different arguments from the ones in their work.

Our inductive assumption is that after $k$ steps we have reduced to a filling $\alpha_k: D_k\to \tX^{(2)}$ of area $\leq C_3^k\cdot C_1'\cdot n\log_2(n)$ and height $m-k$, where $C'_1\geq C_1$ is a constant that incorporates the cost of extending our pushing map to a $1$-neighbourhood of the connected components at height $m-k$. Note that the case $k=0$ is satisfied by assumption. Since $m\leq \delta \log_2(n) + C_2$, this will imply that
\begin{equation}\label{eqn:upper-area-bound}
	\Area_{\tX_0}(\gamma)\leq C_3^{\delta\log_2(n)+C_2}\cdot C_1' \cdot n\log_2(n)\leq C_4 n^{1+\delta \log_2(C_3)}\log_2(n),
\end{equation}
for $C_4:= C_3^{C_2}\cdot C_1'$, completing the proof.

We explain the inductive procedure for reducing the height of the closed $1$-neighbourhood $N_1(C)$ of a connected component $C$ of $\alpha_k^{-1}(h^{-1}(\pm(m-k)))$ by one. To simplify notation, we will not distinguish in notation between $C$ and its image in the $\pm (m-k)$-level set of $\tX$. However, we emphasise that our line of argument actually inductively replaces the van Kampen diagram $D$ by a van Kampen diagram of height $\leq m-k$, which is in general only immersed in $\tX$. Moreover, we will now assume that $C$ maps into the $(m-k)$-level set. The case when $C$ maps into the $-(m-k)$-level set is analogous, except that one needs to replace descending links by ascending links everywhere.

Observe that the boundary $\partial N_1(C)$ of the closed $1$-neighbourhood of $C$ is contained in the $(m-k-1)$-level set of $h$. Our goal is to replace the filling $\alpha_k|_{N_1(C)}$ by a filling $\beta:C'\to \tX^{(2)}$ of height $\leq m-k-1$ with $\beta|_{\partial C'} = \alpha|_{\partial N_1(C)}$. Doing this with all connected components of $\alpha_k^{-1}(h^{-1}(\pm(m-k)))$ will yield a filling $\alpha_{k+1}$ of height $m-k-1$ that will satisfy the asserted area bound.

To construct the filling $\beta$ we first construct a homotopy of the restriction of $\alpha_k$ to the $1$-skeleton of $C$ into the $(m-k-1)$-level set, starting with the vertices, then we extend this homotopy to the $2$-skeleton, and finally we extend it to $N_1(C)$. After equipping $N_1(C)$ with a suitable cell structure, which is implicit in the construction of the homotopy, this produces the desired filling $\beta$.

\begin{figure}[!ht]
\begin{tikzpicture}[scale=2] %strangely, any other scaling factor causes error 'Dimension too large' in the arc operator below
\fill (1,0) circle (1pt);
\fill (1,2) circle (1pt);
\fill (0,1) circle (1pt) node [above left=1pt] {$v$};
\fill (2,1) circle (1pt);
\draw[->-=0.8] (1,0)--(0,1);
\draw[->-=0.8] (1,0)--(2,1);
\draw[->-=0.6] (0,1)--(1,2);
\draw[->-=0.6] (2,1)--(1,2);
\draw (0,1)--node[above=1pt] {$e$}(2,1);
\draw[->-=0.8] (-0.5,0.15)--(0,1);
\draw [red,thick] (0.51,0.49)--(1.49,0.49);
\draw [red,thick,snake it] (-0.3,0.49) arc (180:360:0.4cm and 0.03cm);
\draw [red] (0.1,0.2) node {$\mu_{(v,e)}$};
\fill (-0.3,0.49) circle (1pt) node [left=2pt] {$w_v$};
\fill (0.51,0.49) circle (1pt);
\draw (0.7,0.6) node {$w_{(v,e)}$};
\fill (1.49,0.49) circle (1pt);

\draw (-0.75,1.25) node {$C:$};
\draw[mapping=0.99] (2.75,1)--(3.75,1);

\begin{scope}[xshift=-3.5cm]
\draw (8,2.5) node[right=1pt]{$\R$};
\draw[->] (8,-0.5)--(8,2.5);
\fill (8,0) circle (1pt) node[right=1pt] {$m-k-1$};
\fill (8,1) circle (1pt) node[right=1pt] {$m-k$};
\fill (8,2) circle (1pt) node[right=1pt] {$m-k+1$};
\end{scope}
\end{tikzpicture}
    \caption{Pushing the $1$-skeleton around a vertex $v\in C$.}
  \label{fig:locally-pushing-the-1-skeleton}
\end{figure}
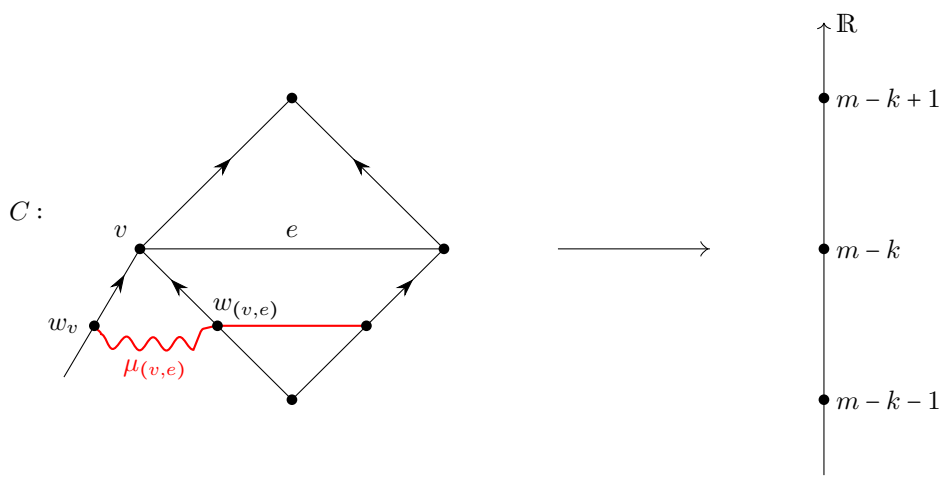

\smallskip
\noindent \emph{Step 1 (Pushing down the $1$-skeleton of $C$).}

We begin by choosing for every vertex $v\in C$ an arbitrary vertex $w_v\in \lk^{\downarrow}(v)$ in its descending link; the vertex $w_v$ corresponds to a choice of an incoming edge in $v$. By definition of the sliced cell structure on $\tX$, every edge $e\in C$ corresponds to the diagonal of a square in $\tX$. We can thus associate to a pair $(v,e)$ consisting of a vertex $v\in C$ and an edge $e\in C$ which has $v$ as one of its endpoints the unique vertex $w_{(v,e)}\in \lk^{\downarrow}(v)$ corresponding to the edge of this square which connects $v$ to the $(m-k-1)$-level set. We fix a geodesic $\mu_{(v,e)}$ between $w_v$ and $w_{(v,e)}$ in $\lk^{\downarrow}(v)$ (see Figure~\ref{fig:locally-pushing-the-1-skeleton}). Then the geodesics $\mu_{(v,e)}$ together with affine homotopies of the edges $e$ inside their corresponding squares define a homotopy of the restriction of $\alpha_k$ to the $1$-skeleton of $C$ to a map with image in the $1$-skeleton of $\tX$ at height $(m-k-1)$. More precisely, this map maps vertices $v\in C$ to the endpoints of the edges defined by the $w_v$ and edges $e=\left[v_1,v_2\right]\in C$ to the paths induced by the concatenations $\mu_{(v_1,e)}\cdot \mu_{(v_2,e)}^{-1}$ in the $(m-k-1)$-level set.

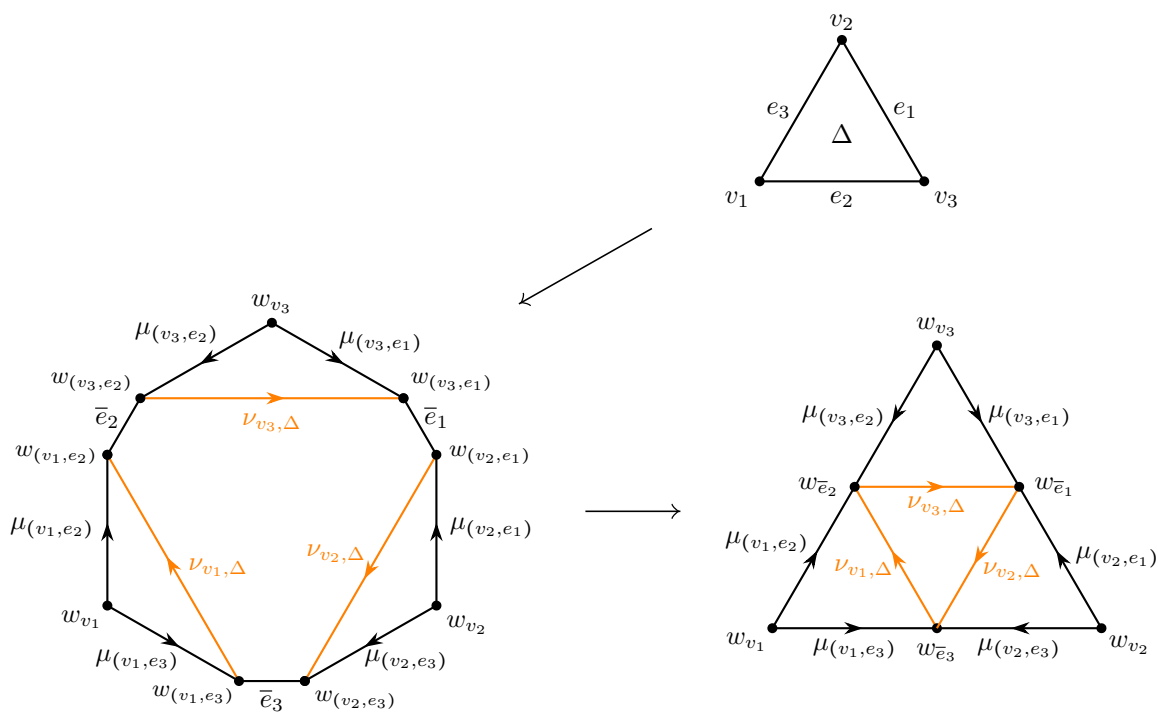
\begin{figure}
    \centering
		\tikzmath{\x1=0.2;}

		\begin{tikzpicture}[thick,scale=2.5]
				
%\draw [help lines] (-1,-1) grid (2,2);
				
			\coordinate (A) at (0.866*\x1, -1+0.5*\x1);
			\coordinate (B) at (-0.866*\x1, -1+0.5*\x1);
			\coordinate (C) at (-0.866, 0.5-\x1);

\begin{scope} % 9-gon			
			\draw (A)--(B);
			\draw [->-=0.55] (-0.866,-0.5)--(B);
			\draw [->-=0.55] (-0.866,-0.5)--(C);
			\draw [orange, ->-=0.55] (B)-- node[right=2pt] {$\nu_{v_1,\Delta}$} (C);
			\fill (A) circle (0.75pt);
			\fill (B) circle (0.75pt);
			\fill (C) circle (0.75pt);
			\fill (-0.866,-0.5) circle (0.75pt);
			
	\begin{scope}[rotate=-120]
			\draw ([rotate=-120]A)--([rotate=-120]B);
			\draw [->-=0.55] (-0.866,-0.5)--([rotate=-120]B);
			\draw [->-=0.55] (-0.866,-0.5)--([rotate=-120]C);
			\draw [orange, ->-=0.55] ([rotate=-120]B)-- node[below=2pt] {$\nu_{v_3,\Delta}$} ([rotate=-120]C);
			\fill ([rotate=-120]A) circle (0.75pt);
			\fill ([rotate=-120]B) circle (0.75pt);
			\fill ([rotate=-120]C) circle (0.75pt);
			\fill (-0.866,-0.5) circle (0.75pt);
	\end{scope}		

	\begin{scope}[rotate=120]
			\draw ([rotate=120]A)--([rotate=120]B);
			\draw [->-=0.55] (-0.866,-0.5)--([rotate=120]B);
			\draw [->-=0.55] (-0.866,-0.5)--([rotate=120]C);
			\draw [orange, ->-=0.55] ([rotate=120]B)-- node[above left=-2pt] {$\nu_{v_2,\Delta}$} ([rotate=120]C);
			\fill ([rotate=120]A) circle (0.75pt);
			\fill ([rotate=120]B) circle (0.75pt);
			\fill ([rotate=120]C) circle (0.75pt);
			\fill (-0.866,-0.5) circle (0.75pt);
	\end{scope}		

		\draw (0,-1) node {$\overline e_3$}; 
		\draw (-0.866,0.5) node {$\overline e_2$};
		\draw (0.866,0.5) node {$\overline e_1$};
		
		\draw (0,1) node[above] {$w_{v_3}$};
		\draw (0.866,-0.5) node[below right] {$w_{v_2}$};
		\draw (-0.866,-0.5) node[below left=-2pt] {$w_{v_1}$};
		
		\draw ([rotate=-120]B) node[above left=-1pt] {\small$w_{(v_3,e_2)}$};
		\draw ([rotate=120]A) node[above right=-1pt] {\small$w_{(v_3,e_1)}$};
		\draw ([rotate=-120]A) node[left] {\small$w_{(v_1,e_2)}$};
		\draw ([rotate=120]B) node[right=1pt] {\small$w_{(v_2,e_1)}$};
		\draw (B) node[below left=-2pt] {\small$w_{(v_1,e_3)}$};
		\draw (A) node[below right] {\small$w_{(v_2,e_3)}$};
		
		\draw ([rotate=30]0,1) node[above=-2pt] {$\mu_{(v_3,e_2)}$};
		\draw ([rotate=-35]0,1) node[above=-2pt] {$\mu_{(v_3,e_1)}$};
		\draw ([rotate=45]0,-1) node[below=-2pt] {$\mu_{(v_2,e_3)}$};
		\draw ([rotate=-45]0,-1) node[below=-2pt] {$\mu_{(v_1,e_3)}$};
		\draw ([rotate=5]-1,0) node[left=-8pt] {$\mu_{(v_1,e_2)}$};
		\draw ([rotate=-5]1,0) node[right=-8pt] {$\mu_{(v_2,e_1)}$};
\end{scope} % 9-gon

% arrow from 9-gon to the big triangle
\draw[semithick,mapping=0.99] (1.65,0)--(2.15,0); 

\begin{scope}[xshift=3.5cm, yshift=-0.12cm]  % big triangle on the right
			\draw [->-=0.55] (-0.866,-0.5)-- node[below] {$\mu_{(v_1,e_3)}$}(0,-0.5);
			\draw [->-=0.55] (-0.866,-0.5)-- node[above left=-2pt] {$\mu_{(v_1,e_2)}$}(-0.433,0.25);
			\draw [orange, ->-=0.55] (0,-0.5)-- node[below left=-3pt] {$\nu_{v_1,\Delta}$} (-0.433,0.25);
			\fill (-0.866,-0.5) circle (0.75pt) node [below left=-2pt] {$w_{v_1}$};
			\fill (0,-0.5) circle (0.75pt) node [below=2pt] {$w_{\overline e_3}$};
			
		\begin{scope}[rotate=-120]
			\draw [->-=0.55] (-0.866,-0.5)--node[left] {$\mu_{(v_3,e_2)}$}(0,-0.5);
			\draw [->-=0.55] (-0.866,-0.5)--node[right] {$\mu_{(v_3,e_1)}$}(-0.433,0.25);
			\draw [orange, ->-=0.55] (0,-0.5)-- node[below] {$\nu_{v_3,\Delta}$} (-0.433,0.25);
			\fill (-0.866,-0.5) circle (0.75pt) node [above] {$w_{v_3}$};
			\fill (0,-0.5) circle (0.75pt) node [left=2pt] {$w_{\overline e_2}$};
		\end{scope}
		
		\begin{scope}[rotate=120]
			\draw [->-=0.55] (-0.866,-0.5)--node[right=2pt] {$\mu_{(v_2,e_1)}$}(0,-0.5);
			\draw [->-=0.55] (-0.866,-0.5)--node[below] {$\mu_{(v_2,e_3)}$}(-0.433,0.25);
			\draw [orange, ->-=0.55] (0,-0.5)-- node[below right=-2pt] {$\nu_{v_2,\Delta}$} (-0.433,0.25);
			\fill (-0.866,-0.5) circle (0.75pt) node [below right=-1pt] {$w_{v_2}$};
			\fill (0,-0.5) circle (0.75pt) node [right=2pt] {$w_{\overline e_1}$};
		\end{scope}
\end{scope} % big triangle on the right
		
% arrow from small triangle to 9-gon 
\draw[semithick,mapping=0.99] (2,1.5)--(1.3,1.1); 
				
\begin{scope}[xshift=3cm, yshift=2cm]		% small triangle 

	\draw (-0.433,-0.25)-- node[left] {$e_3$}(0,0.5)--node[right]{$e_1$}(0.433,-0.25)--node[below]{$e_2$}(-0.433,-0.25);
	\fill (-0.433,-0.25) circle (0.75pt) node [below left] {$v_1$};
	\fill (0.433,-0.25) circle (0.75pt) node [below right] {$v_3$};	
	\fill (0,0.5) circle (0.75pt) node [above] {$v_2$};
	\draw (0,0) node {$\Delta$};		
		
\end{scope} % small triangle
		
\end{tikzpicture}

    \caption{Pushing a $2$-cell $\Delta$ in $C$ into the $(m-k-1)$-level set}
    \label{fig:pushing-2-cells}
\end{figure}

\smallskip
\noindent \emph{Step 2 (Pushing down the $2$-cells of $C$).}

We need to extend the above homotopy to the $2$-cells of $C$. For this we observe that the $2$-cells are triangles arising by slicing a $3$-cube along a level set. They are determined by their three vertices $v_1$, $v_2$ and $v_3$ which lie in a common $3$-cube. For each of the $v_i$ we denote by $\nu_{v_i,\Delta}$ a geodesic in $\lk^{\downarrow}(v_i)$ between $w_{(v_i,e_j)}$ and $w_{(v_i,e_k)}$ for $\left\{i,j,k\right\}=\left\{1,2,3\right\}$ the labelling in Figure~\ref{fig:pushing-2-cells}. Since $\Delta$ is contained in a $3$-cube the $\nu_{v_i,\Delta}$ either have length $1$ (if $\Delta$ is a top triangle) or length $0$ (if $\Delta$ is a bottom triangle); see Figure~\ref{fig:sliced-3-cube}. 

Since all ascending and descending links of $\tX$ are simply connected, we can fill the loop $\mu_{(v_1,e_3)}\cdot \nu_{v_1,\Delta}\cdot \mu_{(v_1,e_2)}^{-1}$ in $\lk^{\downarrow}(v_1)$ and analogously for the loops based at $w_{v_2}$ and $w_{v_3}$. Moreover, the $\nu_{v_i,\Delta}$ are either all constant or they span a triangle in the sliced cell structure of the $(m-k-1)$-level set. This provides us with a way of extending our homotopy of the restriction of $\alpha_k$ to the $1$-skeleton of $C$ to a homotopy of its restriction to $C$ so that the resulting map has image in the $(m-k-1)$-level set. Moreover, every $2$-cell in $C$ is replaced by at most
\begin{multline}\label{eqn:area-of-replacing-a-2-cell}
    1 + \Area_{\lk^{\downarrow}(v_1)}(\mu_{(v_1,e_3)}\cdot \nu_{v_1,\Delta}\cdot \mu_{(v_1,e_2)}^{-1}) + \Area_{\lk^{\downarrow}(v_2)}(\mu_{(v_2,e_1)}\cdot \nu_{v_2,\Delta}\cdot \mu_{(v_2,e_3)}^{-1}) + \\
    \Area_{\lk^{\downarrow}(v_3)}(\mu_{(v_3,e_2)}\cdot \nu_{v_3,\Delta}\cdot \mu_{(v_3,e_1)}^{-1})
\end{multline}
$2$-cells. Since geodesics in $\lk^{\downarrow}(v_i)$ have length at most ${\rm diam}(\lk^{\downarrow}(v_i))$ and the $\nu_{v_i,\Delta}$ have length at most one, we deduce that the expression~\eqref{eqn:area-of-replacing-a-2-cell} is bounded above by $C_3=3T+1$. This implies that the area of the filling obtained by the homotopy that pushes the image of $C$ into the level set at height $m-k-1$ is $\leq C_3 \cdot \Area(C)$.

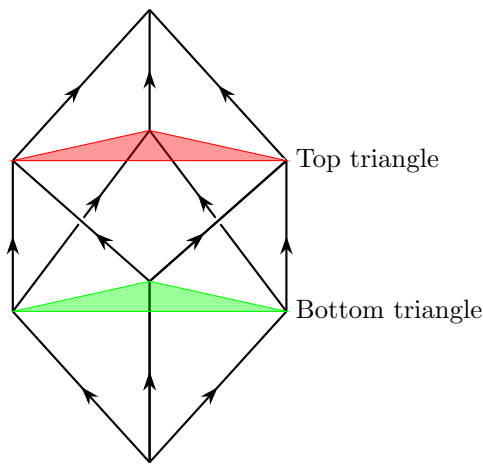
\begin{figure}
    \centering
    \begin{tikzpicture}

    \coordinate (A) at (0,0); 
    \coordinate (B) at (-1.8,2);
    \coordinate (C) at (0,4.4);
    \coordinate (D) at (1.8,2);
    \coordinate (E) at (0,2.4); 
    \coordinate (F) at (-1.8,4);
    \coordinate (G) at (0,6);
    \coordinate (H) at (1.8,4);

    \draw[thick, ->-=0.5] (A) -- (B);
    \draw[thick, ->-=0.65] (B) -- (C);
    \draw[thick, ->-=0.65] (D) -- (C);
    \draw[thick, ->-=0.5] (A) -- (D);

    \draw[thick, ->-=0.4] (E) -- (F);
    
    \draw[thick, ->-=0.5] (H) -- (G);

    \draw[thick, ->-=0.5] (A) -- (E);
    
    \draw[thick, ->-=0.5] (C) -- (G);

    \draw[white, line width=3pt] (E) -- (F);
    \draw[white, line width=3pt] (E) -- (H);
    
    \draw[thick, ->-=0.4] (E) -- (F);
    \draw[thick, ->-=0.4] (E) -- (H);

    \draw[thick, ->-=0.5] (B) -- (F);
    \draw[thick, ->-=0.5] (F) -- (G);
    \draw[thick, ->-=0.5] (A) -- (E);
    \draw[thick, ->-=0.4] (E) -- (H);
    \draw[thick, ->-=0.5] (D) -- (H);
    
    \fill[fill=red, draw=red, fill opacity=0.4] (C)--(F)--(H)--(C);
    \fill[fill=green, draw=green, fill opacity=0.4] (B)--(E)--(D)--(B);

    \node at (D) [right] {Bottom triangle};
    \node at (H) [right] {Top triangle};

\end{tikzpicture}
    \caption{The sliced cell structure on a $3$-cube with coloured top and bottom triangles}
    \label{fig:sliced-3-cube}
\end{figure}

\smallskip
\noindent \emph{Step 3 (Extending the filling from $C$ to $N_1(C)$).}

To construct the filling $\beta$ and prove \eqref{eqn:upper-area-bound}, it suffices to explain that we can extend the van Kampen diagram that we constructed in Step 2 to a van Kampen diagram $\beta$ which coincides with $\alpha_k$ in $\partial N_1(C)$ and still satisfies the asserted area bounds. It suffices to explain how to extend the homotopy of $C$ to each of the connected components of $N_1(C)\setminus C$. Since $C$ is a planar $2$-complex, its complement inside the plane is homeomorphic to a disjoint union of open discs and an unbounded component. Moreover, these components are in correspondence with the components of $N_1(C)\setminus C$. We fix such a component, take its closure and parametrise its boundary that intersects $\partial C$ non-trivially by a circle $K$. The cell structure on $\partial C$ induces a cell structure on $K$. By the definition of the sliced cell structure on $\tX$ at every vertex of $K$ there will be a finite (non-trivial) number of edges connecting it to the $(m-k-1)$-level set inside $N_1(C)$ and every edge of $K$ will be the diagonal of a square with a unique vertex in the $(m-k-1)$-level set. This means that locally around a vertex of $K$ the closure of the connected component of $N_1(C)\setminus C$ corresponding to $K$ looks as in Figure~\ref{fig:neighbourhood-of-boundary-vertex}. In particular, there is a canonical homotopy of $K$ into the $(m-k-1)$-skeleton inside its connected component in $N_1(C)\setminus C$, which is indicated in green and red in Figure~\ref{fig:neighbourhood-of-boundary-vertex}, where $\eta_{(e,f)}$ is the path defined inside $\lk^{\downarrow}(v_2)$ connecting the vertices $w_{(v_2,e)}$ and $w_{(v_2,f)}$ via the sequence of descending edges starting at $v_2$. We can thus extend the homotopy of $C$ to a homotopy of $N_1(C)$ that fixes the boundary as indicated in Figure~\ref{fig:homotopy-extension-to-boundary}.

\begin{figure}
    \centering
    \begin{tikzpicture}[scale=1.5]
		\begin{scope}[blue]
			\fill (0,0) circle (1.5pt); 
			\fill (2,0) circle (1.5pt) node [below=4pt] {$U_e$}; 
			\fill (4,0) circle (1.5pt) node [below=4pt] {$U_f$}; 
			\fill (6,0) circle (1.5pt);
			\draw[thick] (0,0)--(1,1.5)--(2,0)--(3,1.5)--(4,0)--(5,1.5)--(6,0)--(0,0);
			\fill (2.8,0) circle (1.5pt); \fill (3.4,0) circle (1.5pt);
			\draw [thick] (3,1.5)--(2.8,0); \draw [thick] (3,1.5)--(3.4,0);
			\draw [thick,dashed] (1,1.5)--(0.8,0); \draw [thick,dashed] (1,1.5)--(1.4,0);
			\draw [thick,dashed] (5,1.5)--(4.8,0); \draw [thick,dashed] (5,1.5)--(5.4,0);
			\draw (7.5,0.7) node {$N_1(C)\setminus C$};
		\end{scope}
		
		\begin{scope}[black]
		\fill (1,1.5) circle (1.5pt) node [above=2pt] {$v_1$};
		\fill (3,1.5) circle (1.5pt) node [above=2pt] {$v_2$}; 
		\fill (5,1.5) circle (1.5pt) node [above=2pt] {$v_3$};
		\draw[very thick] (1,1.5)--node[above=1pt]{$e$}(3,1.5)--node[above=1pt]{$f$}(5,1.5);
		\draw[dashed,thick] (-0.5,1.5)--(1,1.5);
		\draw[dashed,thick] (5,1.5)--(6.5,1.5); \draw (7,1.5) node {$K$};
		\end{scope}
		
		\begin{scope}[green]
		\fill (2.9,0.75) circle (1.25pt); \fill (3.2,0.75) circle (1.25pt);
		\draw [dashed,thick] (0.5,0.75)--(1.5,0.75);
		\draw [thick] (2.5,0.75)--(3.5,0.75);
		\draw [dashed,thick] (4.5,0.75)--(5.5,0.75);
		\draw (3.0,0.5) node {$\eta(e,f)$};
		\end{scope}
		
		\begin{scope}[red]
			\fill (1.5,0.75) circle (1.5pt);
			\fill (2.5,0.75) circle (1.5pt);
			\fill (3.5,0.75) circle (1.5pt);
			\fill (4.5,0.75) circle (1.5pt);
			\draw[thick] (1.5,0.75)--node[above=2pt]{$\overline e$}(2.5,0.75);
			\draw[thick] (3.5,0.75)--node[above=2pt]{$\overline f$}(4.5,0.75);
		\end{scope}
			
    \end{tikzpicture}

    \caption{A connected neighbourhood of a boundary vertex inside $N_1(C)\setminus C$}
    \label{fig:neighbourhood-of-boundary-vertex}
\end{figure}
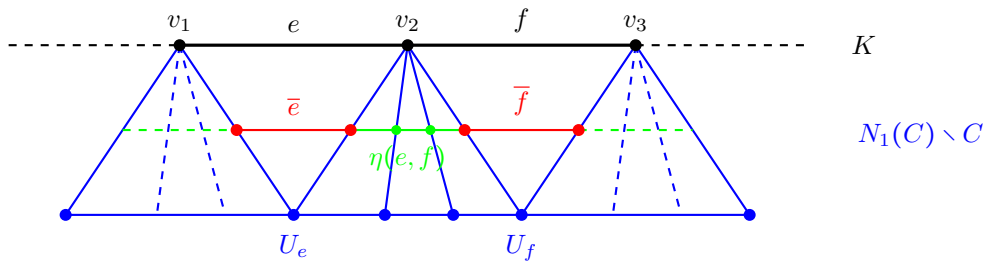

\begin{figure}
    \centering
    \begin{tikzpicture}[scale=2]
    % Define coordinates
    \coordinate (Ue) at (0, 0);
    \coordinate (Uf) at (4, 0);
    \coordinate (V1) at (-1, 1);
    \coordinate (V2) at (2, 1);
    \coordinate (V3) at (5, 1);

    % Draw the horizontal line between Ue and Uf
    \draw[thick, green] (Ue) -- node[below, green] {$\eta(e,f)$} (Uf);
    
    % Draw the red vectors
    \draw[thick, red, ->-=0.5] (V1) -- node[pos=0.4, below, sloped] {$\mu_{c(v_1, e)}$} (Ue);
    \draw[thick, red, ->-=0.5] (V2) -- node[pos=0.5, below, sloped] {$\mu_{c(v_2, e)}$} (Ue);
    \draw[thick, green, -] (Ue) -- (Uf);
    \draw[thick, red, ->-=0.5] (V2) -- node[pos=0.5, below, sloped] {$\mu_{c(v_2, f)}$} (Uf);
    \draw[thick, red, ->-=0.5] (V3) -- node[pos=0.5, below, sloped] {$\mu_{c(v_3, e)}$} (Uf);
    
    % Draw the points as small circles
    \filldraw[red] (V1) circle (1.25pt) node[left] {$W_{v_1}$};
    \filldraw[red] (V2) circle (1.25pt) node[above] {$W_{v_2}$};
    \filldraw[red] (V3) circle (1.25pt) node[right] {$W_{v_3}$};
    \filldraw[red] (Ue) circle (1.25pt) node[below left, blue] {$U_E$};
    \filldraw[red] (Uf) circle (1.25pt) node[below right, blue] {$U_F$};
    
\end{tikzpicture}
    \caption{Local extension of the homotopy of $C$ to a connected component of $N_1(C)\setminus C$}
    \label{fig:homotopy-extension-to-boundary}
\end{figure}
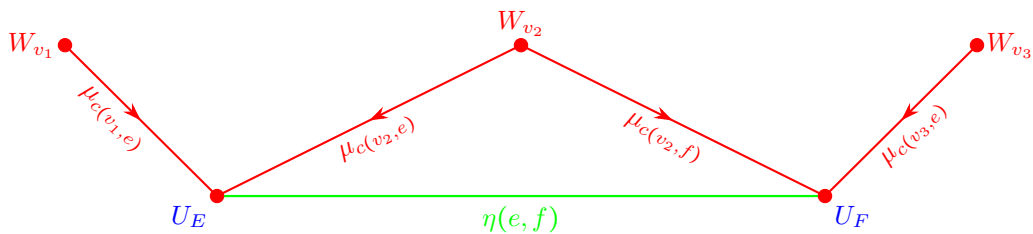

We observe that our extension of the filling to $N_1(C)$ required additional horizontal $2$-cells. However, every edge and vertex of the initial filling $\alpha$ will only create such $2$-cells at most once when we convert its adjacent non-horizontal $2$-cells going from height $m-k$ to height $m-k-1$ into horizontal $2$-cells. Moreover, the horizontal $2$-cells created will only start expanding by $C_3$ in the subsequent pushing steps. The number of new horizontal $2$-cells created in this way is bounded above by the super-additive closure of the maximum of the filling functions of the ascending and descending links, evaluated in the number of $2$-cells of the filling $\alpha$. Since there is a finite number of isomorphism types of ascending and descending links and their filling functions are linear, we deduce that the total number of $2$-cells created in this way will be bounded above by a constant multiple of the initial area of our filling. In particular, this can be accounted for by replacing the constant $C_1$ in the initial area bound for $\alpha$ by a sufficiently large constant $C_1'\geq C_1$ that only depends on $X$ and $h$. This completes the induction step and thus the proof of the inequality~\eqref{eqn:upper-area-bound}.
\end{proof}

\begin{remark}
    \Cref{prop:geometric-Gersten-Short} only gives information in the case that $T$ is finite. This is for instance the case if either $\tX$ admits a cocompact group action or there is a uniform bound on the diameter and filling functions of the ascending and descending links in $\tX$.     
\end{remark}

\begin{corollary}\label{cor:geometric-Gersten-Short}
    With the same assumptions as in \Cref{prop:geometric-Gersten-Short}, assume that there is a group $H$ that acts properly cocompactly on $\tX_0$. Then $\delta_H(n)\preccurlyeq n^{1+\delta\cdot \log_2(C_3)}\cdot \log_2(n)$.
\end{corollary}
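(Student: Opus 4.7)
The plan is to deduce \Cref{cor:geometric-Gersten-Short} from the filling bound of \Cref{prop:geometric-Gersten-Short} together with the quasi-isometry invariance of Dehn functions (\Cref{cor:Alonso}). Taking the maximum of $\Area_{\tX_0}(\gamma)$ over all combinatorial loops $\gamma$ of length at most $n$ in $\tX_0^{(1)}$, \Cref{prop:geometric-Gersten-Short} gives immediately
\[
    \delta_{\tX_0}(n)\leq C_4 \cdot n^{1+\delta\log_2(C_3)} \cdot \log_2(n),
\]
so the only remaining task is to transfer this bound from $\tX_0$ to $H$.

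To invoke \Cref{cor:Alonso}, I would first observe that $\tX_0$ is simply connected. Since the ascending and descending links of $h : \tX \to \mathbb{R}$ at every vertex of $\tX$ are assumed to be simply connected in \Cref{prop:geometric-Gersten-Short}, this is a standard consequence of Bestvina--Brady Morse theory (compare the $n=2$ case of \Cref{thm:BB-Morse}, where the same link hypothesis yields that the kernel $H$ is of type $F_2$ precisely because it acts cocompactly on a simply connected $2$-complex, namely $\tX_0$). Combined with local finiteness of $\tX$ and the hypothesis that $H$ acts properly discontinuously and cocompactly on $\tX_0$, this places us exactly in the setting of \Cref{cor:Alonso}, so that $\delta_H \asymp \delta_{\tX_0}$.

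Combining this equivalence with the upper bound on $\delta_{\tX_0}$ above yields
\[
    \delta_H(n) \preccurlyeq n^{1+\delta\log_2(C_3)} \cdot \log_2(n),
\]
as required. There is no real obstacle in this deduction; once one accepts the simple connectivity of $\tX_0$, which is the only substantive ingredient beyond what has already been established, the rest is a direct chaining of \Cref{prop:geometric-Gersten-Short} and \Cref{cor:Alonso}.
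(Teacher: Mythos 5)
Your proposal is correct and follows exactly the same route as the paper, which likewise deduces the corollary directly from \Cref{prop:geometric-Gersten-Short} and \Cref{cor:Alonso}. You merely make explicit the (standard) observation that $\tX_0$ is simply connected, which the paper leaves implicit.
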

\begin{proof}
    This is a direct consequence of \Cref{prop:geometric-Gersten-Short} and \Cref{cor:Alonso}.
\end{proof}

\begin{theorem}\label{thm:Dehn-Bradys-group}
 If $H$ is Brady's group from~\cite{Bra-99}, $X$ is the associated cube complex, $g:X\to S^1$ the associated height function, and $h\colon \tX\to \R$ its lift to the universal cover, then we can choose $C_3=61$ and $\delta=16$, and obtain $\delta_H(n)\preccurlyeq n^{96}$.
\end{theorem}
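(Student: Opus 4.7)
The plan is to verify the two explicit constants $\delta = 16$ and $C_3 = 61$, and then insert them into \Cref{cor:geometric-Gersten-Short}. The bound $\delta = 16$ follows by combining \Cref{thm:Main-hyperbolicity-constant}, which asserts that $\tX^{(1)}$ with its cubular metric satisfies the 4-point $\delta$-condition with $\delta = 4$, with \Cref{lem:thin-hyperbolicity}, which upgrades this to $4\cdot 4 = 16$-thin triangles.

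For $C_3 = 3T + 1$, one needs to bound $T$, the maximum filling area of a loop of length at most $2\max\bigl\{{\rm diam}(\lk^{\downarrow}(v)),{\rm diam}(\lk^{\uparrow}(v))\bigr\}+1$ inside any ascending or descending link of $\tX$ with respect to $h$. By \Cref{rmk:4-cycles}, each such link is either the octahedron $F^{\uparrow}\ast F^{\uparrow}\ast F^{\uparrow}$ (at an unramified vertex) or the suspension $\Gamma^{\uparrow}\ast F^{\uparrow}$ of a 20-cycle (at a ramified vertex), and both are 2-dimensional triangulations of $S^2$ of diameter $2$. Hence one must fill loops of length at most $5$.

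The heart of the argument is to establish $T = 20$. The octahedron has only $8$ triangles of girth $3$; a direct enumeration shows every loop of length $\leq 5$ there has filling area well below $20$. For the suspension of $C_{20}$ (with $20$ ``top'' triangles $N{-}v_i{-}v_{i+1}$ and $20$ ``bottom'' triangles $S{-}v_i{-}v_{i+1}$), the worst case is the $4$-cycle $N{-}v_i{-}S{-}v_j{-}N$ with $v_i, v_j$ antipodal in $C_{20}$ (i.e.\ $d_{C_{20}}(i,j) = 10$). This loop separates $S^2$ into two disks, each tiled by $10$ top- and $10$ bottom-triangles over one of the two 10-edge arcs of $C_{20}$ between $v_i$ and $v_j$, giving minimum filling area $2\cdot 10 = 20$. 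Every length-$5$ loop is, up to north-south symmetry, of the form $N{-}v_i{-}S{-}v_j{-}v_{j\pm 1}{-}N$; absorbing the $C_{20}$-edge into a single adjacent triangle reduces it to a length-$4$ cycle whose two $C_{20}$-endpoints are at distance at most $9$, yielding filling area $\leq 1 + 2\cdot 9 = 19 < 20$. Loops of length $\leq 3$ are filled by at most one triangle. Hence $T = 20$ and $C_3 = 3\cdot 20 + 1 = 61$.

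Plugging into \Cref{cor:geometric-Gersten-Short} yields
\[
    \delta_H(n) \preccurlyeq n^{1+16\log_2(61)}\log_2(n).
\]
Since $\log_2(61) < \log_2(64) = 6$, numerically $1 + 16\log_2(61) < 95.9$, so the additional $\log_2(n)$ factor is absorbed into the difference $n^{96 - (1+16\log_2(61))}$ asymptotically, giving $\delta_H(n) \preccurlyeq n^{96}$. The main obstacle is the combinatorial filling analysis in the suspension of $C_{20}$: one must verify that the antipodal 4-cycle really attains minimum filling area $20$ (no more efficient filling via combining the two hemispheres or via non-simple van Kampen diagrams exists) and that no length-$5$ loop, possibly with backtracking or unusual structure, exceeds this bound.
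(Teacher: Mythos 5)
Your proposal is correct and follows essentially the same route as the paper: $\delta=16$ via \Cref{thm:Main-hyperbolicity-constant} and \Cref{lem:thin-hyperbolicity}, $T\leq 20$ hence $C_3=61$, and substitution into \Cref{cor:geometric-Gersten-Short} with the numerical check $1+16\log_2 61<96$. Two small remarks: your enumeration of length-$5$ loops in the suspension of $C_{20}$ omits the type $N{-}v_i{-}v_{i+1}{-}v_{i+2}{-}v_{i+3}{-}N$ (filling area $3$, harmless), and the ``main obstacle'' you flag at the end is not actually an obstacle --- only the upper bound $T\leq 20$ is needed, so the lower bound on the antipodal $4$-cycle's filling area and the sharpness $T=20$ play no role in the proof.
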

\begin{proof}
 To obtain an upper bound on $C_3$, observe that by Remark~\ref{rmk:4-cycles} all ascending and descending links for Brady's cube complex are spheres of diameter two, which are obtained by taking the suspension of a $20$-cycle in the ramified case and a $4$-cycle in the unramified case. In particular, we obtain an upper bound on $T$ by giving an upper bound on the filling area of a cycle of length at most $5$ in such a sphere. It is easy to see that any such cycle with non-trivial area will be simple. This implies that $T\leq 20$.\footnote{It is easy to give an example of a $4$-cycle of area precisely 20, meaning that with our approach we can not do better than this.} We deduce that we can choose $C_3=3T+1=61$. To complete the proof we need to show that we can choose $\delta=16$. This follows from \Cref{cor:hyperbolicity-constant}, which we will prove in \Cref{sec:upper-bound-on-hyperbolicity-constant}.
\end{proof}

\begin{proof}[Proof of \Cref{thm:Main-Dehn}]
The result is now a direct consequence of \Cref{thm:Dehn-Bradys-group}.
\end{proof}

\begin{remark}
    The constant $C_3$ in \Cref{prop:geometric-Gersten-Short} captures the worst possible area expansion of a $2$-cell. By taking into account that the ascending and descending links at different vertices may be different, a more careful analysis would improve this bound for many concrete examples. For instance, in Brady's  cube complex only a certain fraction of the vertices is ramified and a more careful analysis should lead to an improvement in the upper bound on its Dehn function. However, giving such an improved bound would be more technical and probably would still not give an optimal bound. We therefore do not pursue this line of argument here.
\end{remark}

\section{An upper bound on the hyperbolicity constant \texorpdfstring{$\delta$}{delta}}\label{sec:upper-bound-on-hyperbolicity-constant}
In this section we will prove \Cref{thm:Main-hyperbolicity-constant}. As a consequence, we will obtain an upper bound on the $\delta>0$ such that the $1$-skeleton of the cube complex $\tX$ from Section~\ref{sec:Bradys-group} equipped with its cubular cell structure has $\delta$-thin triangles. This will complete the proof of \Cref{thm:Main-Dehn}. Throughout this section, we will assume that cube complexes are equipped with their cubical cell structure, unless mentioned otherwise. We will further assume that all graphs have edges of side length 1 and are endowed with the induced length metric. 

\subsection{A quantitative hyperbolicity criterion for cube complexes}

The remaining step for completing the proof of \Cref{thm:Dehn-Bradys-group} and thus of \Cref{thm:Main-Dehn} is to determine an explicit constant $\delta\geq 0$ such that the graph $\tX^{(1)}$ defined by the $1$-skeleton of the cube complex from Section~\ref{sec:Bradys-group} has $\delta$-thin triangles. To do so, we will combine the following two results on median graphs with properties of the cube complex $\tX$ from \Cref{sec:Bradys-group}. This will also yield a proof of \Cref{thm:Main-hyperbolicity-constant}. Since we will not require the definition of a median graph here, we omit it and refer the interested reader to~\cite{Che-00, CDEHV-08}.

\begin{theorem}[Chepoi~\cite{Che-00}]\label{thm-Chepoi}
	Let $\tX$ be a ${\rm CAT}(0)$ cube complex. Then $\tX^{(1)}$ is a median graph. Conversely, every median graph is the $1$-skeleton of a ${\rm CAT}(0)$ cube complex, which is unique up to isomorphisms of cube complexes.
\end{theorem}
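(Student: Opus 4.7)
The plan is to handle the two directions of the equivalence separately, in both cases leveraging the theory of hyperplanes in $\mathrm{CAT}(0)$ cube complexes and the characterization of median graphs in terms of intervals.

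For the forward direction, given a $\mathrm{CAT}(0)$ cube complex $\tX$, I would first set up hyperplane theory: every edge $e$ of $\tX$ determines a unique hyperplane $H_e$, and the fundamental distance formula states that $d_{\tX^{(1)}}(u,v)$ equals the number of hyperplanes separating $u$ from $v$, which can be proved by induction on distance using the fact that the carriers of hyperplanes are convex. Given three vertices $u,v,w$, for each hyperplane $H$ at least two of $u,v,w$ must lie in a common halfspace determined by $H$; let $\mathfrak{h}(H)$ denote this ``majority'' halfspace. I would then define the median $m$ as the vertex lying in $\mathfrak{h}(H)$ for every hyperplane $H$. Existence of $m$ follows from the Helly property for halfspaces (pairwise intersecting halfspaces have a common vertex), which in turn follows from $\mathrm{CAT}(0)$-ness via convexity. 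Checking that $m$ lies on a geodesic from each pair is then immediate, since the hyperplanes separating $m$ from $u$ are exactly those separating $u$ from both $v$ and $w$ (and analogously), so the hyperplane counts add up correctly along $u \to m \to v$. Uniqueness follows since any two vertices are distinguished by their halfspace profiles.

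For the converse, given a median graph $G$, I would construct a cube complex $\tX$ by gluing in an abstract $n$-cube along every isometrically embedded copy of the $1$-skeleton of the $n$-cube (with at least one such copy in each dimension being minimal in an appropriate sense). By construction $\tX^{(1)} = G$. The two nontrivial verifications are simple-connectedness and Gromov's link condition. Simple connectedness follows from the fact that in median graphs, every cycle can be filled by repeatedly replacing ``majority-side'' paths using the median operation, reducing any closed loop to null-homotopy through the $2$-cubes (squares) glued in. The link condition is the real content: one must show that if $n$ edges $e_1,\dots,e_n$ at a vertex $v$ pairwise span squares with $v$, then they all lie in a common $n$-cube at $v$. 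This is proved by iteratively taking medians of the ``opposite corners'' of the pairwise squares to fill in higher-dimensional cubes; the median axioms guarantee both that this process terminates and that the resulting configuration is combinatorially an $n$-cube. Uniqueness of $\tX$ follows because the cube structure is forced: a set of vertices spans a cube iff the induced subgraph is the $1$-skeleton of a cube, a purely graph-theoretic condition on $G$.

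The main obstacle is the verification of Gromov's link condition in the converse direction. The inductive step requires carefully tracking how medians of three pairwise-adjacent corners of partial cubes produce the missing corners, and then showing (via the uniqueness of medians in $G$) that no new identifications are forced, so the result is a genuine flag simplicial link. A secondary subtlety is proving the Helly property for halfspaces in the forward direction; I would deduce it from a Radon-type separation argument using that intervals $I(u,v) = \{x : d(u,x) + d(x,v) = d(u,v)\}$ are themselves convex subcomplexes, which can be established directly from the hyperplane-distance formula.
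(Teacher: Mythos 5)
The paper does not prove this theorem: it is stated as a known result of Chepoi~\cite{Che-00}, with no argument supplied, so there is no in-paper proof to compare your sketch against.

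Judged on its own, your outline captures the standard strategy. The forward direction (hyperplane distance formula, majority halfspaces, Helly) is the right skeleton, and you correctly identify the Helly property for halfspaces as the substantive input; it does admit a direct combinatorial proof, so there is no circularity, but you would need to supply it rather than wave at ``$\mathrm{CAT}(0)$-ness via convexity.'' In the converse direction, your prescription for which subgraphs receive glued cubes --- ``every isometrically embedded copy of the $1$-skeleton of the $n$-cube, with at least one such copy in each dimension being minimal in an appropriate sense'' --- is not a well-defined recipe. The correct prescription is cleaner: glue an $n$-cube on every induced subgraph isomorphic to the graph of the $n$-cube, then prove separately that in a median graph such subgraphs are automatically convex and that the resulting cube assignment is consistent under passing to faces. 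This has to be pinned down before your link-condition verification is even a well-posed statement, since that verification presupposes you know exactly which pairs of edges at a vertex span a $2$-cube. Your iterated-median argument for flagness of links is the right idea, but as written the converse half gestures at a proof rather than giving one.
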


A graph is called a \emph{$(C\times D)$-rectangle} if it is isomorphic to the full subgraph contained in a rectangle of side lengths $C,~D\in \mathbb{N}$ spanned by subintervals of the coordinate axes inside the Cayley graph of $\mathbb{Z}^2$ with respect to the generating set $\left\{(1,0),(0,1)\right\}$. A \emph{square} of side length $C\in \mathbb{N}$ (or \emph{$C$-square}) is a $(C\times C)$-rectangle. To avoid confusion between $C$-squares and squares that arise as sides of cubes in a cube complex, we will subsequently refer to the latter as $2$-cubes.

\begin{theorem}[{\cite[Corollary 5]{CDEHV-08}}]\label{thm:CDEHV}
	Let $\Gamma$ be a median graph with unit length edges and let $C\in \mathbb{N}$. Then $\Gamma$ is $C$-hyperbolic with respect to the $4$-point condition for hyperbolicity if and only if it contains an isometrically embedded $C$-square, but no isometrically embedded $(C+1)$-square.
\end{theorem}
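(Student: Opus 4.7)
The plan is to translate the $4$-point condition for a 4-tuple $(w,x,y,z) \in \Gamma^4$ into a statement about hyperplane counts in the $\mathrm{CAT}(0)$ cube complex $\tX$ dual to $\Gamma$ (which exists by \Cref{thm-Chepoi}), and then to extract an isometric square of the appropriate size from the geometry of the four triple-medians.

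First, I would classify each hyperplane of $\tX$ that separates at least one pair of $\{w,x,y,z\}$ by the induced partition into two non-empty subsets, yielding seven types: four ``singleton'' types (such as $\{w\}|\{x,y,z\}$) with counts $a,b,c,d$, and three ``2-2 split'' types ($\{w,x\}|\{y,z\}$, $\{w,y\}|\{x,z\}$, $\{w,z\}|\{x,y\}$) with counts $p,q,r$. Since graph distance counts separating hyperplanes, a direct computation gives
\begin{align*}
d(w,x)+d(y,z) &= (a+b+c+d) + 2(q+r),\\
d(w,y)+d(x,z) &= (a+b+c+d) + 2(p+r),\\
d(w,z)+d(x,y) &= (a+b+c+d) + 2(p+q).
\end{align*}
Assuming WLOG $p\geq q\geq r$, these three sums are in increasing order, so the $4$-point constant witnessed by $(w,x,y,z)$ is $(L-M)/2 = q-r \leq q$.

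To upgrade this to an isometric square, I would replace $(w,x,y,z)$ by the four triple-medians $m_z := \operatorname{med}(w,x,y)$, $m_y := \operatorname{med}(w,x,z)$, $m_x := \operatorname{med}(w,y,z)$, $m_w := \operatorname{med}(x,y,z)$. A case check using the majority-vote description of medians in median graphs verifies that this new 4-tuple retains the same 2-2 split counts $p,q,r$ but has all singleton counts $0$: for any type-$a$ hyperplane, every triple-median lies on the majority $\{x,y,z\}$-side since at most one of the three vertices defining it equals $w$. Next I would show that the convex hull $H$ of $\{m_z, m_y, m_x, m_w\}$ in $\tX$ is isomorphic as a median graph to the cuboid $[0,p]\times[0,q]\times[0,r]$. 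Two facts suffice: (i) two hyperplanes of different 2-2 types cross, since the four medians populate all four quadrants defined by any such pair; (ii) two hyperplanes of the \emph{same} 2-2 type are parallel within $H$, because they induce the same partition of the four medians, and an induction over iterated medians (using majority vote) confines every vertex of $H$ to the two populated quadrants, so no 2-cube in $H$ can span both hyperplanes.

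Combining (i) and (ii) yields the cuboid structure for $H$, with the four medians occupying the alternating corners $(0,0,0), (0,q,r), (p,0,r), (p,q,0)$. Since convex hulls in median graphs are isometrically embedded, $H$ embeds isometrically in $\Gamma$; in particular the subgrid $[0,q]\times[0,q]\times\{0\}$ is an isometrically embedded $q$-square. This gives both directions of the theorem: if $\Gamma$ has no isometric $(C+1)$-square then $q\leq C$ for every 4-tuple, so $L-M \leq 2q \leq 2C$ and $\Gamma$ is $C$-hyperbolic; conversely, the corners of an isometric $C$-square give a 4-tuple with $p=q=C,\ r=0$, whence $L-M=2C$ and $\Gamma$ is not $(C-1)$-hyperbolic. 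The main obstacle is fact (ii): verifying that same-type parallelism holds \emph{within} $H$ rather than merely in $\tX$, which requires carefully running the majority-vote argument along the iterated medians generating the convex hull.
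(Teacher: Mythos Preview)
First, note that the paper does not supply its own proof of this statement: \Cref{thm:CDEHV} is quoted from \cite{CDEHV-08}, and the only comment in the paper is that the proof there actually yields the sharp two-sided version. So there is no in-paper argument to compare your proposal against; I can only assess the proposal on its own merits.

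Your hyperplane bookkeeping (the counts $a,b,c,d,p,q,r$ and the identity $L-M=2(q-r)$) and the reduction to the four triple medians are correct and are the natural starting point. The gap is in fact~(ii), and it is genuine. You argue that two same-type hyperplanes are parallel within the convex hull $H$ by an ``induction over iterated medians''. This conflates the convex hull with the median closure. In a median graph the convex hull of a finite set $S$ is obtained by closing under $\operatorname{med}(s,s',x)$ with $s,s'$ in the current set and $x$ \emph{arbitrary} in $\Gamma$, not merely under medians of triples drawn from $S$. Your majority-vote step fails exactly when $s$ and $s'$ lie in opposite populated quadrants and $x$ lies in an unpopulated one: then $\operatorname{med}(s,s',x)$ can land in an unpopulated quadrant.

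Here is a concrete counterexample to the cuboid claim. In the $3$-cube $\{0,1\}^3$ take $m_z=(0,0,0)$, $m_y=(0,0,1)$, $m_x=(1,1,0)$, $m_w=(1,1,1)$. These four points are already their own triple medians, with $p=2$, $q=1$, $r=0$. The two type-$p$ hyperplanes $\{x_1=\tfrac12\}$ and $\{x_2=\tfrac12\}$ induce the same partition $\{m_z,m_y\}\mid\{m_x,m_w\}$, yet they cross inside $H$: no halfspace of the cube contains all four points, so $H$ is the entire $3$-cube, and for instance $(1,0,0)=\operatorname{med}(m_z,m_w,(1,0,0))\in H$ sits in an ``unpopulated'' quadrant. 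Thus $H\cong\{0,1\}^3$, not the cuboid $[0,p]\times[0,q]\times[0,r]=[0,2]\times[0,1]$ you predict, and your extraction of a $q$-square from the purported cuboid structure does not go through. (Note that these four points are even median-closed as a set, so iterating medians among them produces nothing new; the convex hull is strictly larger.)

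What remains to be supplied is a correct mechanism for producing an isometric square of side at least $q-r$ from the data $p\geq q\geq r$; that is the actual content of the direction ``no $(C{+}1)$-square $\Rightarrow$ $C$-hyperbolic'', and it is handled differently in \cite{CDEHV-08}.
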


Note that the precise statement of~\cite[Corollary 5]{CDEHV-08} says that $\Gamma$ is $C$-hyperbolic if it contains no $C$-square. However, its proof actually shows that the hyperbolicity constant for $\Gamma$ is precisely the maximal side length of a square that isometrically embeds in $\Gamma$. 

In Section~\ref{sec:max-squares}, we will apply \Cref{thm:CDEHV} to the cube complex $\tX$ from \Cref{sec:Bradys-group}. This will yield the following result, which has \Cref{thm:Main-hyperbolicity-constant} as a direct consequence.

\begin{theorem}\label{thm:hyperbolicity-constant}
    The $1$-skeleton $\tX^{(1)}$ of the cube complex from \Cref{sec:Bradys-group} contains an isometrically embedded $4$-square, but no isometrically embedded $5$-square. In particular, the best possible hyperbolicity constant for the $4$-point condition is $\delta= 4$.
\end{theorem}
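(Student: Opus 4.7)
The plan is to apply \Cref{thm-Chepoi} and \Cref{thm:CDEHV} to reduce the problem to computing the maximal side length of an isometrically embedded square in $\tX^{(1)}$. Since $\tX$ is a ${\rm CAT}(0)$ cube complex, $\tX^{(1)}$ is a median graph whose hyperbolicity constant for the $4$-point condition equals this maximum. I therefore aim to prove two statements: (a) an isometrically embedded $4$-square exists, and (b) no isometrically embedded $5$-square exists.

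For (a), I would construct an explicit example. Starting from an unramified vertex $v \in \tX^{(0)}$, whose link has the form $F_1 \ast F_2 \ast F_3$ by \Cref{lem:4-cycles}(1), I pick a $4$-cycle in the sub-join $F_1 \ast F_2$ to obtain a $2$-cube at $v$ in the $1$-$2$ plane direction. By adjoining adjacent $2$-cubes in the same coordinate plane, I build up a $4 \times 4$ grid of $2$-cubes. The care required is to route the grid so that its interior vertices are not $\Theta_3$-ramified (the case where the obstruction from \Cref{lem:4-cycles}(2) would appear); shifting the grid by suitable translations in $\tX$ should make this possible. Isometric embedding of the resulting $1$-skeleton in $\tX^{(1)}$ would then be verified by a hyperplane-counting argument: the $8$ distinct hyperplanes dual to the two families of parallel edges of the grid must each contribute $1$ to every inter-vertex distance that they separate.

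For (b), I argue by contradiction. An isometrically embedded $5$-square $S$ in the median graph $\tX^{(1)}$ forces, by convexity arguments in median graphs, the existence of an isometrically embedded $5 \times 5$ flat $F \subset \tX$ consisting of $25$ $2$-cubes, whose $1$-skeleton contains $S$. Its two families of dual hyperplanes lie in a common pair of coordinate factor directions of $\tX$; without loss of generality these are directions $1$ and $2$. For each of the $16$ interior vertices of $F$, the flat provides a $4$-cycle in the link using two edges in direction $1$ and two edges in direction $2$. By \Cref{lem:4-cycles}(2), at a $\Theta_3$-ramified vertex this would yield a $4$-cycle in the graph $\Gamma$, contradicting the absence of $4$-cycles in $\Gamma$. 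Hence no interior vertex of $F$ can be $\Theta_3$-ramified.

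The remaining step, which I expect to be the main obstacle, is to show that every $5 \times 5$ flat in the $1$-$2$ direction in $\tX$ must contain at least one $\Theta_3$-ramified interior vertex. This requires a careful analysis of how the ramification locus lifts from $X$ to $\tX$ and of how its image density inside a $1$-$2$ planar slab compares to the size of a $3 \times 3$ interior sub-grid. The sharpness of the constant $4$ (versus $5$) reflects a delicate balance in this density: the $\Theta_3$-ramification must be dense enough to force a ramified vertex into the interior of every $5 \times 5$ grid, yet sparse enough that a $4 \times 4$ grid can be routed around them, as used in (a). Working out this combinatorial analysis of Brady's branched cover is the crux of the proof.
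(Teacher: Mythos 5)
Your reduction to maximal embedded squares via Theorem~\ref{thm-Chepoi} and Theorem~\ref{thm:CDEHV} is exactly the paper's first step, so the high-level strategy is right. However, there are several genuine gaps, the most serious of which is a structural misconception.

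First, you assume in (b) that an isometrically embedded $5$-square in $\tX^{(1)}$ forces a two-dimensional $5\times5$ flat $F\subset\tX$ consisting of $25$ Euclidean $2$-cubes. This is false in general, and the paper explicitly works with the opposite phenomenon: the $4$-cycle at an interior vertex may carry a \emph{diagonal} (Types~U.2 and~R.2 of Lemma~\ref{lem:4-cycles}), meaning the $2\times2$ patch around that vertex folds along a $3$-cube rather than lying flat. The paper's own $4$-square in Figure~\ref{fig:4-square} is of this folded kind --- it sits inside a $3$-dimensional parallelepiped, not in a plane --- so the flat you aim to construct in (a) is the wrong object to look for, and ruling out flat $5\times5$ grids in (b) would not rule out $5$-squares.

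Second, your obstruction is too weak. You rule out interior vertices that are ramified in the direction transverse to the putative flat (your ``$\Theta_3$-ramified'' case), using the $4$-cycle-in-$\Gamma$ argument. That is correct as far as it goes, and matches the paper's Lemma~\ref{lem:no-int-ram-vertices} for \emph{isolated} interior ramified vertices. But vertices ramified in a direction inside the square are not excluded by this argument: there the $4$-cycle alternates between the $F$- and $\Gamma$-factors of the join and is perfectly legal. The paper instead needs (and proves) the stronger Proposition~\ref{prop:no-interior-ramification}: no \emph{connected ramified subgraph} (in particular no ramified edge or path) may lie entirely in the interior of an isometrically embedded rectangle. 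It is this subgraph-level obstruction, not a vertex-density count, that shows up in every branch of the case analysis.

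Third, the step you yourself flag as the crux --- that every $5\times5$ flat must contain a transverse-ramified interior vertex, to be argued via a density estimate for the ramification locus --- is not what the paper does and is likely not true as stated. The paper instead starts from the observation that any rectangle of side lengths at least $3$ has an interior \emph{unramified} vertex, classifies the possible $2$-square at such a vertex (U.1 or U.2), and performs a finite case analysis of all developments, showing that any attempt to grow to a $5\times5$ always pushes a ramified edge or path into the interior, contradicting Proposition~\ref{prop:no-interior-ramification}. Your density heuristic would at best replace part of one branch of this case analysis and would in any event need to be reformulated to account for ramified \emph{paths}, not just vertices, and for folded (non-flat) squares. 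As written, the proposal contains the correct reduction but does not supply a proof of the no-$5$-square assertion, and the construction for the $4$-square is aimed at the wrong geometric configuration.
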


As a consequence we obtain an upper bound on the hyperbolicity constant for the thin triangle condition, completing the proof of \Cref{thm:Dehn-Bradys-group}.
\begin{corollary}\label{cor:hyperbolicity-constant}
    The $1$-skeleton $\tX^{(1)}$ of the cube complex $\tX$ from \Cref{sec:Bradys-group} has $16$-thin triangles.
\end{corollary}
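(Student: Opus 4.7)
The plan is to deduce this corollary as an immediate consequence of \Cref{thm:hyperbolicity-constant} combined with the quantitative relationships between the various equivalent formulations of hyperbolicity established in \Cref{sec:hyperbolicity}. There is essentially no independent content to prove here: the hard work is packaged into \Cref{thm:hyperbolicity-constant}, and what remains is a two-step translation between hyperbolicity conventions.

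The chain of implications is as follows. First, \Cref{thm:hyperbolicity-constant} gives that $\tX^{(1)}$ satisfies the $4$-point $\delta$-condition with $\delta = 4$. Next, by the lemma stating the equivalence of \eqref{eqn:Gromov-hyperbolicity} and \eqref{eqn:4-point-condition} (i.e., the $4$-point $\delta$-condition is equivalent to Gromov $\delta$-hyperbolicity in the sense of \Cref{def:delta-hyp}), this implies that $\tX^{(1)}$ is $4$-hyperbolic. Finally, \Cref{lem:thin-hyperbolicity} asserts that a $\delta$-hyperbolic space has $4\delta$-thin triangles, and applying this with $\delta = 4$ yields that $\tX^{(1)}$ has $16$-thin triangles, as required.

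The main obstacle is therefore not in this deduction, but lies entirely upstream in proving \Cref{thm:hyperbolicity-constant}, where one must exhibit an isometrically embedded $4$-square in $\tX^{(1)}$ and rule out any isometrically embedded $5$-square; this is carried out via the median graph criterion of \Cref{thm:CDEHV} using the detailed structure of the links described in \Cref{lem:4-cycles} and \Cref{rmk:4-cycles}. Given \Cref{thm:hyperbolicity-constant}, the corollary itself is essentially a bookkeeping statement about the transition from the $4$-point condition to the thin-triangle condition, with the factor $4$ in $4\delta$ arising from \Cref{lem:thin-hyperbolicity}. If sharper quantitative bounds relating these two hyperbolicity constants were available for $1$-skeleta of cube complexes (as is raised as a question immediately after \Cref{lem:thin-hyperbolicity}), then one could replace $16$ by a smaller constant and correspondingly improve the exponent $96$ in \Cref{thm:Main-Dehn}.
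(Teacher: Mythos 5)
Your proposal is correct and matches the paper's proof exactly: the paper likewise deduces the corollary directly from \Cref{thm:hyperbolicity-constant} (giving $\delta=4$ for the $4$-point condition, hence $4$-hyperbolicity) and \Cref{lem:thin-hyperbolicity} (giving $4\delta = 16$-thin triangles). Your additional remarks about potential improvements echo \Cref{rmk:improving-delta} in the paper.
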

\begin{proof}
    This is a direct consequence of Theorem~\ref{thm:hyperbolicity-constant} and Lemma~\ref{lem:thin-hyperbolicity}. 
\end{proof}

\begin{remark}\label{rmk:improving-delta}
    Since we deduce the upper bound on the constant for the thin triangle condition for hyperbolicity from the very general Lemma~\ref{lem:thin-hyperbolicity}, it is possible that in our situation this constant $\delta$ can be improved. This would allow us to improve the upper bound in \Cref{thm:Main-Dehn}. We observe that the boundary of the $4$-square in Figure~\ref{fig:4-square} defines a (degenerate) geodesic triangle $\Delta$ which is not $\delta$-thin for any $\delta<8$. Thus, the best possible upper bound on $\delta$ we can hope to obtain is $8$.
\end{remark}

\subsection{Maximal embedded squares}\label{sec:max-squares}
In this section we will prove Theorem~\ref{thm:hyperbolicity-constant}, showing that we obtain a hyperbolicity constant of $4$ in the $4$-point definition of hyperbolicity for the cube complex from \Cref{sec:Bradys-group}. One can interpret our arguments as a combinatorial version of those employed in~\cite{Bra-99} to show that there are no isometrically embedded Euclidean planes in $\tX$. 

The main step in the proof of \Cref{thm:hyperbolicity-constant} is an analysis of isometrically embedded rectangles in the $1$-skeleton of $\tX$, which will allow us to determine the maximal size of an isometrically embedded square. To achieve this, we will start from some unramified vertex $v\in \tX$ and consider all possible developments of rectangles starting from this vertex. Since $\tX$ is symmetric with respect to permutations of the three coordinates of $\Theta^3$, the possible extensions we need to consider will reduce to relatively few cases.

Before starting our analysis, we record some combinatorial constraints on the ramification loci of cubes in $\tX$, which will play an important role. Every $2$-cube of $\tX$ has precisely one ramified edge and precisely one ramified vertex, while every $3$-cube of $\tX$ has precisely three pairwise non-intersecting ramified edges (see Figure~\ref{fig:squares-and-cubes}). Moreover, every maximal cubical Euclidean neighbourhood of an unramified vertex looks as the right-most cube with side lengths $2$ in Figure~\ref{fig:squares-and-cubes}. 

When applying these combinatorial constraints, we will often focus on \emph{interior (ramified)} vertices, edges or subgraphs, which are (ramified) vertices, edges or subgraphs that have empty intersection with the boundary of the rectangle. We will illustrate our proofs with several figures, in which we will draw ramified components in red and all other components in black. Moreover, we will shade embedded rectangles in yellow, if the rectangle under consideration is not uniquely determined by the figure.

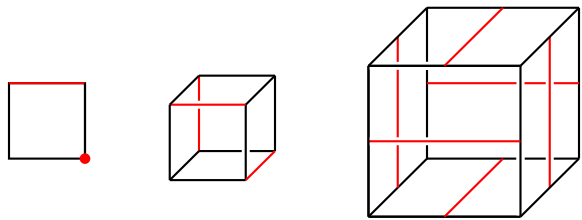
\begin{figure}
\centering
\begin{tikzpicture}
% Left square
\draw[thick] (0,0) -- (1,0) -- (1,1) -- (0,1) -- cycle;  % Square
\draw[red, thick] (0,1) -- (1,1);  % Top edge highlighted in red
\draw[thick] (1,0) -- (1,1);  % Right edge highlighted in red
\fill[red] (1,0) circle (2pt);  % Red dot on bottom right corner

% Middle cube
\begin{scope}[xshift=2.5cm, yshift=0.1cm]
    \coordinate (A) at (0,0,0);
    \coordinate (B) at (1,0,0);
    \coordinate (C) at (1,1,0);
    \coordinate (D) at (0,1,0);
    \coordinate (E) at (0,0,1);
    \coordinate (F) at (1,0,1);
    \coordinate (G) at (1,1,1);
    \coordinate (H) at (0,1,1);

    \draw[red, thick] (D) -- (A);
    \draw[thick] (A) -- (B);
    \draw[thick] (B) -- (C);
    \draw[thick] (C) -- (D);

    \draw[white, line width = 3pt] (G) -- (F);
    \draw[white, line width = 3pt] (G) -- (H);
    \draw[thick] (E) -- (F);
    \draw[thick] (F) -- (G);
    \draw[red, thick] (G) -- (H);
    \draw[thick] (H) -- (E);
    \draw[thick] (A) -- (E);
    \draw[red, thick] (B) -- (F);
    \draw[thick] (C) -- (G);
    \draw[thick] (D) -- (H);
\end{scope}

% Right larger cube
\begin{scope}[xshift=5.5cm]
    \coordinate (A) at (0,0,0);
    \coordinate (B) at (2,0,0);
    \coordinate (C) at (2,2,0);
    \coordinate (D) at (0,2,0);
    \coordinate (E) at (0,0,2);
    \coordinate (F) at (2,0,2);
    \coordinate (G) at (2,2,2);
    \coordinate (H) at (0,2,2);
    
    \coordinate (a) at (1,0,0);
    \coordinate (b) at (1,2,0);
    \coordinate (c) at (1,0,2);
    \coordinate (d) at (1,2,2);
    \coordinate (e) at (0,1,0);
    \coordinate (f) at (2,1,0);
    \coordinate (g) at (0,1,2);
    \coordinate (h) at (2,1,2);
    \coordinate (i) at (0,0,1);
    \coordinate (j) at (2,0,1);
    \coordinate (k) at (0,2,1);
    \coordinate (l) at (2,2,1);

    \draw[thick] (D) -- (A);
    \draw[thick] (A) -- (B);
    \draw[thick] (B) -- (C);
    \draw[thick] (C) -- (D);

    \draw[white, line width = 3pt] (G) -- (F);
    \draw[white, line width = 3pt] (G) -- (H);
    
    \draw[thick] (E) -- (F);
    \draw[thick] (H) -- (E);
    \draw[thick] (A) -- (E);

    \draw[red, thick] (i) -- (k); %left
    \draw[red, thick] (e) -- (f); %back

    \draw[white, line width = 3pt] (G) -- (F);
    \draw[white, line width = 3pt] (G) -- (H);
    \draw[white, line width = 3pt] (l) -- (j);
    \draw[white, line width = 3pt] (g) -- (h);
    \draw[thick] (F) -- (G);
    \draw[thick] (G) -- (H);

    \draw[red, thick] (g) -- (h); %front
    \draw[red, thick] (l) -- (j); %right

    \draw[thick] (G) -- (H);
    \draw[thick] (B) -- (F);
    \draw[thick] (C) -- (G);
    \draw[thick] (D) -- (H);
    \draw[thick] (E) -- (F);
    \draw[thick] (E) -- (H);

        \draw[red, thick] (a) -- (c);
    \draw[red, thick] (b) -- (d);

\end{scope}

\end{tikzpicture}    \caption{Ramification loci in Euclidean $2$-cubes and $3$-cubes of $\tX$}
    \label{fig:squares-and-cubes}
\end{figure}

To prove that $\tX^{(1)}$ does not contain any isometrically embedded $5$-squares we will require some auxiliary results and observations.

We start by observing that embedded $2$-squares in $\tX^{(1)}$ are in one to one correspondence with $4$-cycles in the link of their central vertex. Thus, Lemma~\ref{lem:4-cycles} provides us with the following description of the possible $4$-cycles in links of $\tX$, where $\Gamma$ is the graph from Lemma~\ref{lem:4-cycles}.

If $v\in \tX$ is an unramified vertex, then there are two types of $4$-cycles in $\lk(v)$:

\begin{itemize}[leftmargin=2.5cm]
    \item[(Type U.1)] a $4$-cycle without a diagonal edge;
    \item[(Type U.2)] a $4$-cycle with a diagonal edge.
\end{itemize}

Here Type U.1 corresponds to a square that extends to a Euclidean square in $\tX$, while Type U.2 corresponds to a square that forms two sides of a rectangular parallelepiped in $\tX$. We illustrate both cases in Figure~\ref{fig:unramified-2-squares}.

\begin{figure}
    \centering
\begin{tikzpicture}
	\begin{scope}
\coordinate[label=above:] (A) at (0,0,0);
\coordinate[label=above:] (B) at (2,0,0);
\coordinate[label=above:] (C) at (2,2,0);
\coordinate[label=above:] (D) at (0,2,0);
\coordinate[label=above:] (E) at (0,0,4);
\coordinate[label=above:] (F) at (2,0,4);
\coordinate[label=above:] (G) at (2,2,4);
\coordinate[label=above:] (H) at (0,2,4);
\coordinate[label=above:] (I) at (0,0,2);
\coordinate[label=above:] (J) at (2,0,2);
\coordinate[label=above left:{$v$}] (K) at (2,2,2);
\coordinate[label=above:] (L) at (0,2,2);
\coordinate[label=above:] (M) at (4,0,0);
\coordinate[label=above:] (N) at (4,2,0);
\coordinate[label=above:] (O) at (4,0,2);
\coordinate[label=above:] (P) at (4,2,2);
\coordinate[label=above:] (Q) at (4,0,4);
\coordinate[label=above:] (R) at (4,2,4);

\draw[thick] (B) -- (C);

\draw[thick] (A) -- (M);
\draw[thick] (D) -- (A);

\draw[white, line width=3pt] (L) -- (P);
\draw[red, very thick] (L) -- (I);
\draw[white, line width=3pt] (O) -- (P);
\draw[white, line width=3pt] (J) -- (K);
\draw[thick] (J) -- (K);

\draw[white, line width=3pt] (H) -- (R);

\draw[thick] (I) -- (O);

\draw[white, line width=3pt] (F) -- (G);

\draw[thick] (M) -- (N);
\draw[thick] (E) -- (Q);
\draw[thick] (Q) -- (R);
\draw[red, very thick] (R) -- (H);
\draw[thick] (H) -- (E);
\draw[red, very thick] (O) -- (P);
\draw[thick] (P) -- (L);

\draw[thick] (A) -- (E);
\draw[red, very thick] (B) -- (F);
\draw[thick] (C) -- (G);
\draw[thick] (D) -- (H);
\draw[thick] (R) -- (N);
\draw[thick] (Q) -- (M);

\draw[thick] (F) -- (G);

\draw[red, very thick] (N) -- (D);
\fill[color=black] (2, 2, 2) circle [radius=1.5pt];

\fill[opacity=0.5, yellow] (H) -- (D) -- (N) -- (R) -- cycle;
\node[anchor=north] at (2, 0, 4) {{Type U.1}};	

	\end{scope}
	
	\begin{scope}[xshift=6.5cm]
\coordinate[label=above:] (A) at (0,0,0);
\coordinate[label=above:] (B) at (2,0,0);
\coordinate[label=above:] (C) at (2,2,0);
\coordinate[label=above:] (D) at (0,2,0);
\coordinate[label=above:] (E) at (0,0,4);
\coordinate[label=above:] (F) at (2,0,4);
\coordinate[label=above:] (G) at (2,2,4);
\coordinate[label=above:] (H) at (0,2,4);
\coordinate[label=above:] (I) at (0,0,2);
\coordinate[label=above:] (J) at (2,0,2);
\coordinate[label=above left:{$v$}] (K) at (2,2,2);
\coordinate[label=above:] (L) at (0,2,2);

\draw[thick] (A) -- (B);
\draw[thick] (B) -- (C);
\draw[red, very thick] (C) -- (D);
\draw[thick] (D) -- (A);
\draw[thick] (L) -- (I);
\draw[thick] (I) -- (J);
\draw[white, line width=3pt] (J) -- (K);
\draw[thick] (J) -- (K);
\draw[thick] (I) -- (J);
\draw[white, line width=3pt] (L) -- (K);
\draw[thick] (K) -- (L);
\draw[red, very thick] (L) -- (I);

\draw[white, line width=3pt] (F) -- (G);

\draw[white, line width=3pt] (G) -- (H);
\draw[red, very thick] (G) -- (H);
\draw[thick] (F) -- (G);
\draw[thick] (E) -- (F);
\draw[thick] (A) -- (E);
\draw[red, very thick] (B) -- (F);
\draw[thick] (D) -- (H);
\draw[thick] (C) -- (G);
\fill[color=black] (2, 2, 2) circle [radius=1.5pt];

\fill[opacity=0.5, yellow] (C) -- (B) -- (F) -- (G) -- cycle;
\fill[opacity=0.5, yellow] (H) -- (D) -- (C) -- (G) -- cycle;
\draw[thick] (H) -- (E);

\node[anchor=north] at (1, 0, 4) {{Type U.2}};	

	\end{scope}

\end{tikzpicture}
    
    \caption{The two possibilities for $2$-squares with unramified interior vertex
    }
    \label{fig:unramified-2-squares}
\end{figure}
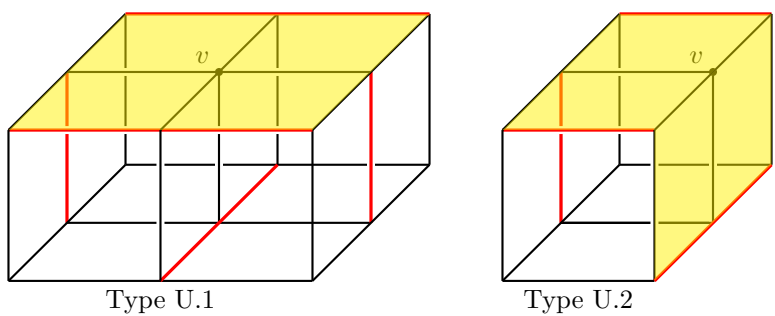

If $v\in \tX$ is a ramified vertex, then there are again two types of $4$-cycles in $\lk(v)$:

\begin{itemize}[leftmargin=2.5cm]
    \item[(Type R.1)] a $4$-cycle without a diagonal edge; and
    \item[(Type R.2)] a $4$-cycle with a diagonal edge.
\end{itemize}

Here Type R.1 corresponds to a $4$-cycle with two vertices in $\Gamma$ and two vertices in $F$ and Type R.2 corresponds to a $4$-cycle with three vertices in $\Gamma$ and one vertex in $F$. 

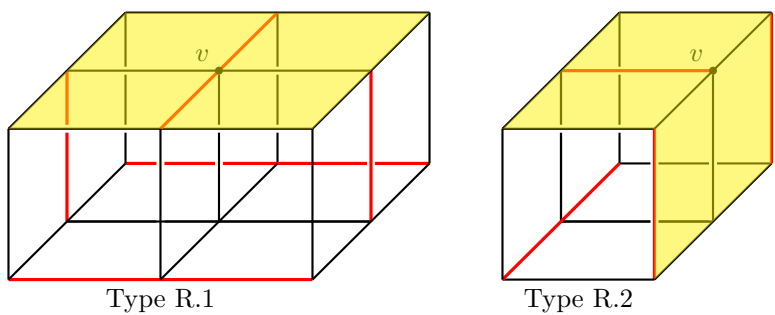
\begin{figure}
    \centering
\begin{tikzpicture}
	\begin{scope}
\coordinate[label=above:] (A) at (0,0,0);
\coordinate[label=above:] (B) at (2,0,0);
\coordinate[label=above:] (C) at (2,2,0);
\coordinate[label=above:] (D) at (0,2,0);
\coordinate[label=above:] (E) at (0,0,4);
\coordinate[label=above:] (F) at (2,0,4);
\coordinate[label=above:] (G) at (2,2,4);
\coordinate[label=above:] (H) at (0,2,4);
\coordinate[label=above:] (I) at (0,0,2);
\coordinate[label=above:] (J) at (2,0,2);
\coordinate[label=above left:{$v$}] (K) at (2,2,2);
\coordinate[label=above:] (L) at (0,2,2);
\coordinate[label=above:] (M) at (4,0,0);
\coordinate[label=above:] (N) at (4,2,0);
\coordinate[label=above:] (O) at (4,0,2);
\coordinate[label=above:] (P) at (4,2,2);
\coordinate[label=above:] (Q) at (4,0,4);
\coordinate[label=above:] (R) at (4,2,4);

\draw[thick] (B) -- (C);

\draw[red, very thick] (A) -- (M);
\draw[thick] (D) -- (A);

\draw[white, line width=3pt] (J) -- (K);
\draw[thick] (J) -- (K);

\draw[white, line width=3pt] (L) -- (P);
\draw[red, very thick] (L) -- (I);
\draw[white, line width=3pt] (O) -- (P);
\draw[white, line width=3pt] (H) -- (R);

\draw[thick] (I) -- (O);

\draw[white, line width=3pt] (F) -- (G);

\draw[thick] (M) -- (N);
\draw[red, very thick] (E) -- (Q);
\draw[thick] (Q) -- (R);
\draw[thick] (R) -- (H);
\draw[thick] (H) -- (E);
\draw[thick] (I) -- (O);
\draw[red, very thick] (O) -- (P);
\draw[thick] (P) -- (L);

\draw[thick] (A) -- (E);
\draw[thick] (B) -- (F);
\draw[red, very thick] (C) -- (G);
\draw[thick] (D) -- (H);
\draw[thick] (R) -- (N);
\draw[thick] (Q) -- (M);

\draw[thick] (F) -- (G);

\draw[thick] (N) -- (D);
\fill[color=black] (2, 2, 2) circle [radius=1.5pt];

\fill[opacity=0.5, yellow] (H) -- (D) -- (N) -- (R) -- cycle;
\node[anchor=north] at (2, 0, 4) {{Type R.1}};	

	\end{scope}
	
	\begin{scope}[xshift=6.5cm]
\coordinate[label=above:] (A) at (0,0,0);
\coordinate[label=above:] (B) at (2,0,0);
\coordinate[label=above:] (C) at (2,2,0);
\coordinate[label=above:] (D) at (0,2,0);
\coordinate[label=above:] (E) at (0,0,4);
\coordinate[label=above:] (F) at (2,0,4);
\coordinate[label=above:] (G) at (2,2,4);
\coordinate[label=above:] (H) at (0,2,4);
\coordinate[label=above:] (I) at (0,0,2);
\coordinate[label=above:] (J) at (2,0,2);
\coordinate[label=above left:{$v$}] (K) at (2,2,2);
\coordinate[label=above:] (L) at (0,2,2);

\draw[thick] (A) -- (B);
\draw[red, very thick] (B) -- (C);
\draw[thick] (C) -- (D);
\draw[thick] (D) -- (A);
\draw[thick] (L) -- (I);
\draw[thick] (I) -- (J);
\draw[white, line width=3pt] (J) -- (K);
\draw[thick] (J) -- (K);
\draw[thick] (I) -- (J);
\draw[white, line width=3pt] (L) -- (K);
\draw[red, very thick] (K) -- (L);
\draw[thick] (L) -- (I);

\draw[white, line width=3pt] (F) -- (G);

\draw[white, line width=3pt] (G) -- (H);
\draw[thick] (G) -- (H);
\draw[red, very thick] (F) -- (G);
\draw[thick] (E) -- (F);
\draw[red, very thick] (A) -- (E);
\draw[thick] (B) -- (F);
\draw[thick] (D) -- (H);
\draw[thick] (C) -- (G);
\fill[color=black] (2, 2, 2) circle [radius=1.5pt];

\fill[opacity=0.5, yellow] (C) -- (B) -- (F) -- (G) -- cycle;
\fill[opacity=0.5, yellow] (H) -- (D) -- (C) -- (G) -- cycle;
\draw[thick] (H) -- (E);

\node[anchor=north] at (1, 0, 4) {{Type R.2}};	

	\end{scope}

\end{tikzpicture}
    
    \caption{The two possibilities for $2$-squares with ramified interior vertex
    }
    \label{fig:ramified-2-squares}
\end{figure}

We record the following consequence.

\begin{lemma}\label{lem:no-int-ram-vertices}
    An isometrically embedded rectangle in $\tX^{(1)}$ can not contain an interior ramified vertex.
\end{lemma}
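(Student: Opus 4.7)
The plan is to argue by contradiction: suppose an isometrically embedded rectangle $R \subset \tX^{(1)}$ contains a ramified vertex $v$ in its interior. By the correspondence recalled just before the lemma, between embedded $2$-squares in $\tX^{(1)}$ and $4$-cycles in the link of their central vertex, the four edges of $R$ at $v$ together with the four $2$-cubes of $R$ meeting at $v$ determine a $4$-cycle $\sigma$ in $\lk(v) = \Gamma \ast F$. By \Cref{lem:4-cycles}(2), $F$ is discrete on four vertices and $\Gamma$ has no $4$-cycles, so $\sigma$ must fall into one of the two types just discussed: Type R.1 (two opposite vertices in $F$ and two opposite in $\Gamma$, with no chord), or Type R.2 (one vertex in $F$ and three in $\Gamma$ forming a $2$-path, with the resulting chord).

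I then exploit the coordinate-direction structure of $\Theta^3$. At the ramified vertex $v$, the $F$-factor of $\lk(v)$ corresponds to edges in the single ramified coordinate direction of $\Theta^3$ (call it direction $1$), while $\Gamma$ projects bipartitely onto $F \ast F$, the two bipartite classes corresponding to directions $2$ and $3$. Every hyperplane of $\tX$ is dual to edges in a single coordinate direction of $\Theta^3$, and in an isometrically embedded rectangle each pair of opposite edges at $v$ is dual to two parallel hyperplanes of $R$ along the same rectangle-direction, and must therefore correspond to a single coordinate direction of $\Theta^3$. Hence opposite vertices of $\sigma$ correspond to edges in the same coordinate direction of $\Theta^3$. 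In Type R.2 the opposite pair $\{\Gamma\text{-vertex}, F\text{-vertex}\}$ mixes a direction-$2$ or $3$ edge with a direction-$1$ edge, which is the desired contradiction. In Type R.1 the same direction compatibility forces the two $\Gamma$-vertices of $\sigma$, which are opposite in $\sigma$, to lie in the same bipartite class of $\Gamma$.

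The main obstacle is ruling out this residual Type R.1 configuration. My plan to handle it is to exploit the $5$-fold monodromy of the covering $\Gamma \to F \ast F$ from \Cref{lem:4-cycles}(2): since preimages of $4$-cycles in $F \ast F$ lift to $20$-cycles in $\Gamma$, no short $\Gamma$-loop exists through the vertices of $\sigma$. Combining this with the standard link $F \ast F \ast F$ at each of the unramified neighbours $v_a, v_b, v_c, v_d$ of $v$, and with the combinatorial constraints on $2$-cubes and $3$-cubes recorded at the beginning of Section~\ref{sec:max-squares} (each $2$-cube has exactly one ramified vertex and one ramified edge, not adjacent; each $3$-cube has three pairwise non-intersecting ramified edges), I expect to produce either an inconsistency between the rectangle's ramification pattern and these local constraints, or a path in $\tX^{(1)}$ strictly shorter than the corresponding path in $R$ between two corners of the $2\times 2$ block at $v$, contradicting the isometric embedding. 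Identifying this obstruction precisely is the technical core of the remaining argument.
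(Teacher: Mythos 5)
Your proposal hinges on a step that is not correct. You claim that for an interior vertex $v$ of an isometrically embedded rectangle $R$, each pair of opposite edges of $R$ at $v$ must lie in a single coordinate direction of $\Theta^3$, because they are dual to parallel hyperplanes of $R$. This holds exactly when the $4$-cycle $\sigma$ in $\lk(v)$ has no diagonal (Types U.1 and R.1): then $\sigma$ is an induced $4$-cycle in a join and alternates between two parts, so opposite vertices fall in the same part. But when $\sigma$ has a diagonal, the two opposite vertices joined by the diagonal are adjacent in $\lk(v)$, hence span a $2$-cube of $\tX$, and therefore correspond to edges in two \emph{different} coordinate directions. This is precisely what Types U.2 and R.2 are (the paper describes them as a square forming two sides of a rectangular parallelepiped), and such vertices genuinely occur in the interior of isometrically embedded rectangles: the bulk of the case analysis in \Cref{prop:no-5-squares} develops rectangles around interior Type U.2 vertices (Figures~\ref{fig:U2-2-square} through~\ref{fig:U2-right-Case2}). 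So your elimination of Type R.2 does not follow; nor does the Type R.1 reduction to the two $\Gamma$-vertices lying in a common bipartite class, which you also derive from the same false direction-compatibility claim (and which does not follow from non-adjacency alone, since $\Gamma$ is a proper cover of $F\ast F$ and hence far from complete bipartite). The remainder of the R.1 case is, as you say, only a plan.

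The paper's argument is different and far shorter. Its proof addresses the case where $v$ is an interior ramified vertex with no ramified $R$-edge incident to it, so that all four vertices of $\sigma$ lie in $\Gamma$, contradicting \Cref{lem:4-cycles}(2). The situations you are trying to rule out, namely $v$ meeting one or two ramified $R$-edges (Types R.2 and R.1), are not disposed of by a local argument at a single link; they are handled by \Cref{prop:no-interior-ramification}, which analyses the entire interior ramified path containing $v$ and rules it out by a global geometric argument about how a straight ramified segment can sit inside a rectangular parallelepiped (Options 1 and 2 of Figure~\ref{fig:no-interior-paths}). A purely local direction-counting argument at $v$ cannot substitute for this.
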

\begin{proof}
    An isolated ramified vertex $v$ would correspond to a $4$-cycle in $\lk(v)$ all of whose vertices are in $\Gamma$, but by our preceding discussion such $4$-cycles can not exist.
\end{proof}

We deduce the following generalisation of Lemma~\ref{lem:no-int-ram-vertices}, which is a key tool in our proof that there is no embedded $5$-square in $\tX^{(1)}$.

\begin{proposition}\label{prop:no-interior-ramification}
    An isometrically embedded rectangle in $\tX^{(1)}$ can not contain a non-empty interior ramified subgraph.
\end{proposition}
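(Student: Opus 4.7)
My plan is to reduce the statement directly to \Cref{lem:no-int-ram-vertices}. The crucial preliminary observation is that the ramification locus of the cover $\tX \to \Theta^3$ is a genuine subcomplex of $\tX$: it is the preimage of the three disjoint copies of $\Theta$ inside $\Theta^3$, and each $\Theta$ is a graph that contains both of its own vertices. In particular, both endpoints of any ramified edge of $\tX$ are automatically ramified vertices, so ramified edges never appear in isolation from ramified vertices.

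Assuming this, suppose for contradiction that an isometrically embedded rectangle $R$ in $\tX^{(1)}$ contains a non-empty interior ramified subgraph $\Lambda$. Since by definition every edge of a subgraph must have its endpoints in the vertex set of that subgraph, the non-emptiness of $\Lambda$ forces $\Lambda$ to contain at least one vertex $v$ (either directly, or as an endpoint of an edge belonging to $\Lambda$). The ramified hypothesis says $\Lambda$ sits inside the ramification locus, so $v$ is a ramified vertex of $\tX$. The interior hypothesis says $\Lambda \cap \partial R = \emptyset$, so $v$ is an interior vertex of $R$. Combining these, $v$ is an interior ramified vertex of $R$, which directly contradicts \Cref{lem:no-int-ram-vertices}.

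I do not anticipate a genuine obstacle in executing this plan: the proposition is a short formal strengthening of \Cref{lem:no-int-ram-vertices}, and the only mildly non-trivial ingredient is the subcomplex property of the ramification locus, which is what allows me to collapse the potentially broader edge/subgraph case into the pure vertex case already handled by the lemma.
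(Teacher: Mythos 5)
Your preliminary observation is correct: the ramification locus of $\tX\to\Theta^3$ is a subcomplex of $\tX$, so the endpoints of a ramified edge are ramified vertices. Granted that, your deduction is formally valid from the \emph{statement} of Lemma~\ref{lem:no-int-ram-vertices}: a non-empty interior ramified subgraph contains a vertex, which must then be an interior ramified vertex. Indeed, under your observation the statement of Lemma~\ref{lem:no-int-ram-vertices} is logically equivalent to the statement of Proposition~\ref{prop:no-interior-ramification}, which is already a warning sign that your argument cannot be carrying any real mathematical weight.

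The actual gap is that the paper's proof of Lemma~\ref{lem:no-int-ram-vertices} does not establish the lemma in the generality you are using. That proof only treats an \emph{isolated} interior ramified vertex $v$, meaning one at which none of the four rectangle edges is ramified, so that the $4$-cycle traced out in $\lk(v)$ by the rectangle lies entirely in $\Gamma$ and therefore cannot exist. But as soon as the rectangle has a ramified edge at $v$, the $4$-cycle in $\lk(v)=\Gamma\ast F$ meets the factor $F$ and is of Type R.1 or R.2, both of which \emph{do} occur, so the link argument yields no contradiction. In fact, since $\Gamma$ contains no $4$-cycles, every $4$-cycle in $\lk(v)$ must meet $F$, so the non-isolated case is the only one that can actually occur; ruling it out is the whole content of the proposition. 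The paper does this by a genuinely geometric argument: it shows that an interior ramified connected subgraph is a straight path, uses the lemma only for the degenerate length-$0$ case, and then, for paths of length $\geq 1$, develops the neighbourhood of the path into a rectangular parallelepiped and shows that both resulting configurations (Options 1 and 2 in Figure~\ref{fig:no-interior-paths}) are impossible. Your proposal replaces this entire analysis with an appeal to the lemma in a strength its proof does not support, so the essential content of the proof is missing.
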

\begin{proof}
Due to the structure of ramified components inside $\tX$ described above, any interior ramified connected subgraph of a rectangle intersects every $2$-cube of $\tX$ in at most one edge. Thus, it has to be a straight path (i.e.\ it is isometric to a Euclidean line segment when viewed as a subspace of $\tX$).

By Lemma~\ref{lem:no-int-ram-vertices} we may assume that the path has length at least one. Thus, the $2$-squares around its initial and terminal vertex correspond to $4$-cycles of Type R.2. In particular, their link has a diagonal. Since $\tX$ is ${\rm CAT}(0)$ we deduce that both $2$-squares lie on the boundary of a rectangular $(1\times 1 \times 2)$-parallelepiped, similar to the one depicted in Figure~\ref{fig:unramified-2-squares}. On the other hand the $2$-squares around any interior vertex of the path have to be of Type R.1. Thus, one of their sides can be identified with one of the faces of the $(2\times 2 \times 2)$-cube in \Cref{fig:squares-and-cubes}. Combined with the observation that every non-ramified edge of $\tX$ lies in precisely four $3$-cubes, it then follows inductively that a neighbourhood of the straight path in the rectangle would have to be embedded in a rectangular parallelepiped in one of the two possible ways shown in Figure~\ref{fig:no-interior-paths}.

Option 1 is impossible, since then the rectangle would not be isometrically embedded in $\tX^{(1)}$, while Option 2 is impossible, because the parallelepiped would then contain an isometrically embedded $2$-square with an interior ramified vertex, contradicting Lemma~\ref{lem:no-int-ram-vertices}.

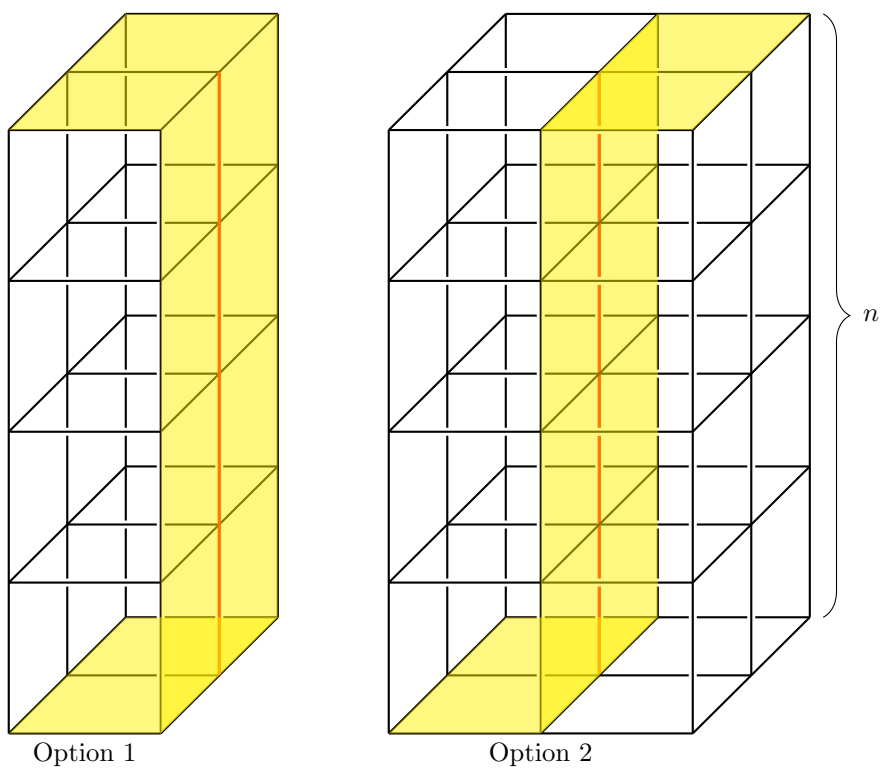
\begin{figure}
    \centering
    \begin{tikzpicture}
	\begin{scope}
			\foreach \x in {0,1} {
				\foreach \y in {0, 1, 2, 3, 4}{
					\foreach \z in {0, 1, 2}{
			\coordinate[label=above:] (\x\y\z) at (2*\x, 2*\y, 2*\z);
		}}}

		\foreach \t in {0, 1, 2, 3, 4}{
				\draw[thick] (0\t0) -- (1\t0);}
		\foreach \t in {0, 1}{
			\draw[thick] (\t00) -- (\t40);}
			
		\foreach \t in {0, 1, 2, 3, 4}{
			\draw[white, line width=3pt] (0\t1) -- (1\t1);}
		
		\foreach \t in {0, 1}{
			\draw[white, line width=3pt] (\t01) -- (\t41);}

		\foreach \t in {0, 1, 2, 3, 4}{
			\draw[thick] (0\t1) -- (1\t1);}
				
		\foreach \t in {0, 1}{
			\draw[thick] (\t01) -- (\t41);}

		\foreach \t in {0, 1}{
			\draw[white, line width=3pt] (\t02) -- (\t42);}
		\foreach \t in {0, 1, 2, 3, 4}{
			\draw[white, line width=3pt] (0\t2) -- (1\t2);}
		
		\foreach \t in {0, 1}{
			\draw[thick] (\t02) -- (\t42);}

		\foreach \t in {0, 1, 2, 3, 4}{
			\draw[thick] (0\t2) -- (1\t2);}

		\foreach \s in {0, 1, 2, 3, 4}{
			\draw[thick] (0\s0) -- (0\s2);}
		\foreach \s in {0, 1, 2, 3, 4}{
			\draw[thick] (1\s0) -- (1\s2);}
			
		\draw[red, very thick] (141) -- (101);
		\fill[opacity=0.5, yellow] (002) -- (000) -- (100) -- (102) -- cycle;
		\fill[opacity=0.5, yellow] (102) -- (100) -- (140) -- (142) -- cycle;
		\fill[opacity=0.5, yellow] (040) -- (140) -- (142) -- (042) -- cycle;
		
		\node[anchor=north] at (1, 0, 4) {{Option 1}};

	\end{scope}
	
		\begin{scope}[xshift=5cm]
		\foreach \x in {0,1, 2} {
			\foreach \y in {0, 1, 2, 3, 4}{
				\foreach \z in {0, 1, 2}{
					\coordinate[label=above:] (\x\y\z) at (2*\x, 2*\y, 2*\z);
		}}}

		\foreach \t in {0, 1, 2, 3, 4}{
			\draw[thick] (0\t0) -- (2\t0);}
		\foreach \t in {0, 1, 2}{
			\draw[thick] (\t00) -- (\t40);}
		
		\foreach \t in {0, 1, 2, 3, 4}{
			\draw[white, line width=3pt] (0\t1) -- (2\t1);}
		
		\foreach \t in {0, 1, 2}{
			\draw[white, line width=3pt] (\t01) -- (\t41);}

		\foreach \t in {0, 1, 2, 3, 4}{
			\draw[thick] (0\t1) -- (2\t1);}
		
		\foreach \t in {0, 1, 2}{
			\draw[thick] (\t01) -- (\t41);}
		
		\draw[red, very thick] (141) -- (101);
		\foreach \t in {0, 1, 2}{
			\draw[white, line width=3pt] (\t02) -- (\t42);}
		\foreach \t in {0, 1, 2, 3, 4}{
			\draw[white, line width=3pt] (0\t2) -- (2\t2);}
		
		\foreach \t in {0, 1, 2}{
			\draw[thick] (\t02) -- (\t42);}
						
		\foreach \t in {0, 1, 2, 3, 4}{
			\draw[thick] (0\t2) -- (2\t2);}

		\foreach \s in {0, 1, 2, 3, 4}{
			\draw[thick] (0\s0) -- (0\s2);}
		\foreach \s in {0, 1, 2, 3, 4}{
			\draw[thick] (1\s0) -- (1\s2);}
		\foreach \s in {0, 1, 2, 3, 4}{
			\draw[thick] (2\s0) -- (2\s2);}

		\fill[opacity=0.5, yellow] (002) -- (000) -- (100) -- (102) -- cycle;
		\fill[opacity=0.5, yellow] (102) -- (100) -- (140) -- (142) -- cycle;
		\fill[opacity=0.5, yellow] (240) -- (140) -- (142) -- (242) -- cycle;
		
		\draw[decorate,decoration={brace,amplitude=10pt,raise=5pt}] (240) -- (200) node[midway,xshift=23pt]{{$n$}};
		
		\node[anchor=north] at (2, 0, 4) {{Option 2}};

	\end{scope}
\end{tikzpicture}
    \caption{The two options for an interior ramified path of length $n$.}
    \label{fig:no-interior-paths}
\end{figure}
\end{proof}

\begin{proposition}\label{prop:no-5-squares}
    The $1$-skeleton $\tX^{(1)}$ of the cube complex from \Cref{sec:Bradys-group} does not contain any isometrically embedded $5$-square.
\end{proposition}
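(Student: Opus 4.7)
My plan is to derive a contradiction from the existence of an isometrically embedded $5$-square by exhibiting a fully interior unit $2$-sub-square, promoting it to a genuine $2$-cube of $\tX$ via the median graph characterisation, and then combining the combinatorial constraint that each $2$-cube has exactly one ramified edge with \Cref{prop:no-interior-ramification}.

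First, I would parametrise the hypothetical $5$-square by the full subgraph of the Cayley graph of $\Z^2$ on the vertices $(i,j)$ with $0\le i,j\le 5$, and pick any one of the $9$ unit $2$-sub-squares whose four corners $(i,j),(i+1,j),(i,j+1),(i+1,j+1)$ satisfy $1\le i,j\le 3$. All four edges and all four vertices of such a sub-square lie strictly in the interior of the $5$-square. Because the embedding of the $5$-square into $\tX^{(1)}$ is isometric, the two pairs of opposite corners of this unit $2$-sub-square are at distance $2$ in $\tX^{(1)}$. Since $\tX^{(1)}$ is a median graph by \Cref{thm-Chepoi}, I would argue that such an isometrically embedded $4$-cycle bounds a unique $2$-cube of $\tX$, whose four edges coincide with the four interior edges of the unit $2$-sub-square.

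Finally, the combinatorial constraint recalled at the start of \Cref{sec:max-squares}---that every $2$-cube of $\tX$ contains precisely one ramified edge---forces one of those four interior edges to be ramified. This produces a non-empty interior ramified subgraph inside the embedded $5$-square, contradicting \Cref{prop:no-interior-ramification}. The step requiring the most care is the identification of each fully interior unit $2$-sub-square with a genuine $2$-cube of $\tX$, since a priori the $4$-cycle only lives in $\tX^{(1)}$; this is where the median graph characterisation of ${\rm CAT}(0)$ cube complexes, together with the fact that in a median graph an isometrically embedded $4$-cycle spans a $2$-cube, is essential. Beyond that, the argument is purely combinatorial and does not require any case analysis on the ramification pattern around the boundary of the $5$-square.
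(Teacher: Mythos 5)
Your proposal is correct, and it takes a genuinely different and substantially shorter route than the paper's proof.

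The paper proves \Cref{prop:no-5-squares} by a case analysis: starting from an unramified interior vertex, classifying its type (U.1 or U.2), and tracking all admissible one-row extensions of the rectangle (Figures 12--19) until every branch contradicts either the combinatorics of $2$-cubes or \Cref{prop:no-interior-ramification}. Your argument cuts through all of this by reducing directly to \Cref{prop:no-interior-ramification}: choose any unit $4$-cycle of the $5$-square all four of whose edges and vertices are interior (e.g.\ the one on $(2,2),(3,2),(3,3),(2,3)$); since the embedding is isometric, opposite corners map to vertices at distance $2$ in $\tX^{(1)}$; by Chepoi's characterisation (\Cref{thm-Chepoi}), $\tX^{(1)}$ is median and any such isometrically embedded $4$-cycle bounds a unique $2$-cube of $\tX$; that $2$-cube has precisely one ramified edge, and this edge is interior. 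This contradicts \Cref{prop:no-interior-ramification}, with no branching at all.

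What this buys: it eliminates the entire development-of-rectangles case analysis and makes \Cref{prop:no-5-squares} an immediate corollary of \Cref{prop:no-interior-ramification} plus two facts the paper already records, namely that every $2$-cube of $\tX$ contains a ramified edge (stated at the start of \Cref{sec:max-squares}) and that interior $4$-cycles in a ${\rm CAT}(0)$ cube complex bound $2$-cubes. The latter is the one step you rightly flag as needing care; it is true and is essentially built into the median-graph/cube-complex correspondence, and the paper itself uses it implicitly (its observation that ``embedded $2$-squares in $\tX^{(1)}$ are in one to one correspondence with $4$-cycles in the link of their central vertex'' presupposes exactly this). One quick way to justify it explicitly: the two hyperplanes dual to a pair of adjacent edges of the $4$-cycle place the four vertices in the four quadrants of the two associated wall partitions, so the hyperplanes cross, and the convex intersection of their carriers $\cong (h_1\cap h_2)\times[0,1]^2$ forces the four given vertices and edges to be exactly the vertices and edges of a $2$-cube. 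If you include that short lemma (or a citation for it), your proof is complete and noticeably cleaner than the paper's.

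A small remark: one could be tempted to shorten further by appealing only to \Cref{lem:no-int-ram-vertices} via the ramified \emph{vertex} of the $2$-cube, but that lemma's proof really addresses only \emph{isolated} interior ramified vertices, whereas the ramified corner of your $2$-cube may sit on an interior ramified edge of a neighbouring $2$-cube; so invoking \Cref{prop:no-interior-ramification} (as you do) through the ramified \emph{edge} is the right call.
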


\begin{proof}
We will prove the assertion by showing that every possible way of extending a $2$-square in $\tX^{(1)}$ to an isometrically embedded $5$-square leads to a contradiction with either the combinatorics of $2$-cubes or \Cref{prop:no-interior-ramification}. 

We start by observing that due to the combinatorics of $2$-cubes every rectangle of side lengths $\geq 3$ will contain an unramified vertex in its interior. Thus, there is no loss of generality in starting from an unramified vertex $v$ and developing the rectangle from there. 

We first assume that the rectangle has an unramified vertex $v$ of Type U.1 in its interior. Due to the symmetries of $\tX$ it will always look as the one pictured in Figure~\ref{fig:2-square-unramified}.
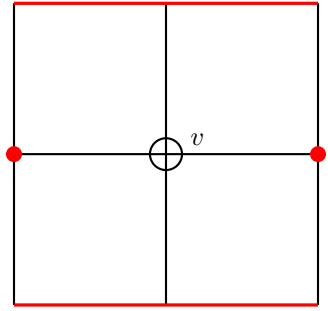
\begin{figure}
    \centering
    \begin{tikzpicture}
\draw[thick] (0,0) -- (4,0);
\draw[thick] (0,4) -- (4,4); 
\draw[thick] (0,0) -- (0,4); 
\draw[thick] (4,0) -- (4,4); 

\draw[thick] (2,0) -- (2,4); 
\draw[thick] (0,2) -- (4,2); 

\fill[red] (0,2) circle (3pt);
\fill[red] (4,2) circle (3pt); 

\draw[thick] (2,2) circle (6pt); 

\node[anchor=west] at (2.2,2.2) {$v$};

\draw[red, very thick] (0,0) -- (4,0);
\draw[red, very thick] (0,4) -- (4,4);

\end{tikzpicture}
    \caption{$2$-square with unramified interior vertex of Type U.1}
    
    \label{fig:2-square-unramified}
\end{figure}

Case by case we consider its possible extensions to the left, right, top, and bottom and show that they will never allow us to produce a $5$-square. We start by observing that the only possible extensions to the left and right correspond to a $4$-cycle of type R.2, followed by $4$-cycles of type R.1. Indeed, if the first extension was not of type R.2, then we would produce an interior ramified vertex, which can not exist by Proposition~\ref{prop:no-interior-ramification}. Any further extension to the left or right after this needs to correspond to $4$-cycles of type R.1, as a further extension by a $4$-cycle of type R.2 would produce an interior ramified graph. We conclude that all possible extensions to the left and right are subsets of the one illustrated in Figure~\ref{fig:U1-height-2-rectangle}.

\begin{figure}
    \centering
    \begin{tikzpicture}
		
		% Draw the grid (7x2 rectangle)
		\foreach \x in {-1, 0,1,...,7, 8} {
			\draw[thick] (\x,0) -- (\x,2); 
		}
		\foreach \y in {0,1, 2} {
			\draw[thick] (-1,\y) -- (8,\y);
		}
		\foreach \y in {0 ,1, 2} {
			\draw[dashed, thick] (-2,\y) -- (-1,\y);
			\draw[dashed, thick] (8,\y) -- (9,\y);
		}

		\draw[red, very thick] (3,0) -- (5,0); 
		\draw[red, very thick] (3,2) -- (5,2); 
		\draw[red, very thick] (-1,1) -- (3,1);
		\draw[red, very thick] (5,1) -- (8,1); 
						
		\fill[red] (3,1) circle (3pt);
		\fill[red] (5,1) circle (3pt); 
		\fill[red] (7,2) circle (3pt); 
		\fill[red] (7,0) circle (3pt); 
		
		\fill[red] (1,2) circle (3pt); 
		\fill[red] (1,0) circle (3pt); 
		
		\fill[red] (-1,2) circle (3pt); 
		\fill[red] (-1,0) circle (3pt);
		
		\draw[thick] (4,1) circle (4pt); 
		\node[anchor=west] at (4,1.2) {$v$};

		\draw[decorate,decoration={brace,amplitude=10pt,mirror,raise=5pt}] (2,0) -- (6,0) node[midway,yshift=-20pt]{{Case 1}};
		\draw[decorate,decoration={brace,amplitude=10pt,raise=5pt}] (3,2) -- (8,2) node[midway,yshift=20pt]{{Case 2}};
		
	\end{tikzpicture}
    \caption{Extension of a $2$-square with an interior Type U.1 vertex $v$ to the left and right}
    \label{fig:U1-height-2-rectangle}
\end{figure}
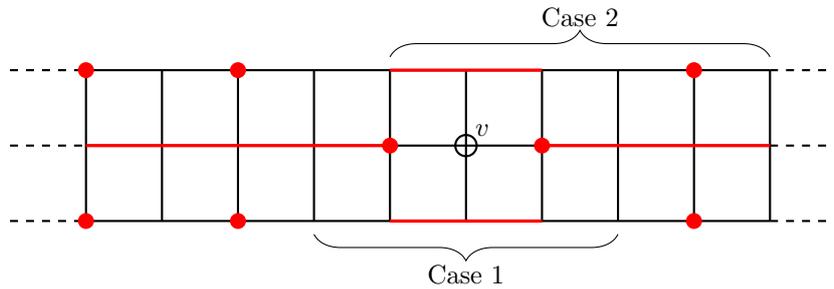

We will show that neither of the rectangles marked as Cases 1 and 2 in Figure~\ref{fig:U1-height-2-rectangle} can be extended to contain a $5$-square. Since there are no other possibilities to produce $5$-squares in a rectangle with an unramified interior vertex of Type U.1, this will reduce our considerations to the case when all unramified interior vertices are of type U.2.

In Case 1 of Figure~\ref{fig:U1-height-2-rectangle} the combinatorics of $2$-cubes immediately imply that any extension by an additional row to the top or bottom will produce an interior ramified path of length two, contradicting Proposition~\ref{prop:no-interior-ramification}.

In Case 2 of Figure~\ref{fig:U1-height-2-rectangle} it suffices to show that we can not extend by two rows to the top or bottom, as this would be the only way to obtain a $5$-square inside the rectangle. Figure~\ref{fig:3-by-5-rectangle} shows the only possible extension by one row to the bottom (an extension to the top would be symmetric). To see this we first observe that by Proposition~\ref{prop:no-interior-ramification} the vertex $w$ has to be of Type R.2. The combinatorics of ramified sets inside $2$-cubes then only allow this unique extension to a $(3\times 5)$-rectangle. A further extension by one row to the bottom would produce an interior ramified edge and thus contradict Proposition~\ref{prop:no-interior-ramification}.

\begin{figure}
    \centering
    \begin{tikzpicture}

% Draw the grid (5x3)
\draw[thick] (0,0) grid (5,3);

% Highlighted red lines
\draw[red, very thick] (0,3) -- (2,3);    
\draw[red, very thick] (2,0) -- (3,0);  
\draw[red, very thick] (2,2) -- (5,2);   
\draw[red, very thick] (0,1) -- (2,1); 
\draw[red, very thick] (4,0) -- (4,1); 

% Red dots at key intersections
\fill[red] (0,2) circle (3pt);  
\draw[black] (1,2) circle (3pt);
\fill[red] (2,2) circle (3pt);  
\fill[red] (4,3) circle (3pt); 
\fill[red] (2,0) circle (3pt); 
\fill[red] (3,0) circle (3pt); 
\fill[red] (4,1) circle (3pt); 
\fill[red] (5,0) circle (3pt); 

% Labels v and w
\node at (1.2,2.2) {$v$}; 
\node at (4.2,0.8) {$w$}; 

\end{tikzpicture}
    \caption{The only possibility for a $(3\times5)$-rectangle with an interior vertex $v$ of Type U.1.}
    \label{fig:3-by-5-rectangle}
\end{figure}
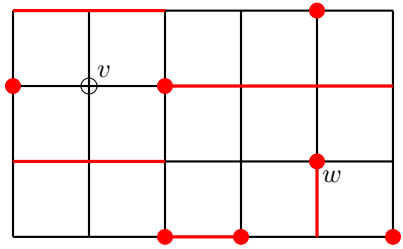

We have reduced to the case when all interior unramified vertices are of Type U.2 and thus look as in Figure~\ref{fig:U2-2-square}. The only possible extension by one column to the left corresponds to a vertex of Type R.2 and thus looks as pictured in Figure~\ref{fig:U2-left}. Extending the rectangle further to the top or bottom would create an interior ramified edge, which is impossible.

\begin{figure}
    \centering
    \begin{tikzpicture}

	\foreach \x in {0,1,2} {
		\draw[thick] (\x,0) -- (\x,2); 
	}
	\foreach \y in {0,1,2} {
		\draw[thick] (0,\y) -- (2,\y);
	}
	
	\draw[red, very thick] (2,0) -- (2,2);
	\draw[red, very thick] (0,0) -- (1,0);
	\draw[red, very thick] (0,2) -- (1,2);

	\fill[red] (0,1) circle (3pt);
	\fill[red] (1,0) circle (3pt);
	\fill[red] (1,2) circle (3pt);	

	\draw[thick] (1,1) circle (4pt); 
	\node[anchor=south west] at (1,1) {$v$};

\end{tikzpicture}
    \caption{$2$-square with unramified interior vertex of Type U.2.}
    \label{fig:U2-2-square}
\end{figure}
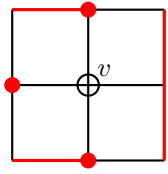

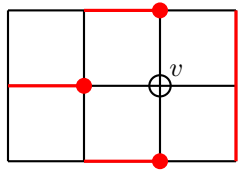
\begin{figure}
    \centering
    \begin{tikzpicture}

\foreach \x in {-1, 0,1,2} {
	\draw[thick] (\x,0) -- (\x,2); 
}
\foreach \y in {0,1,2} {
	\draw[thick] (-1,\y) -- (2,\y);
}

\draw[red, very thick] (2,0) -- (2,2);
\draw[red, very thick] (0,0) -- (1,0);
\draw[red, very thick] (0,2) -- (1,2);
\draw[red, very thick] (-1,1) -- (0,1);

\fill[red] (0,1) circle (3pt);
\fill[red] (1,0) circle (3pt);
\fill[red] (1,2) circle (3pt);	

\draw[thick] (1,1) circle (4pt); 
\node[anchor=south west] at (1,1) {$v$};
	
\end{tikzpicture}
    \caption{The only possible extension of Figure~\ref{fig:U2-2-square} to the left.}
    \label{fig:U2-left}
\end{figure}

Thus, the only remaining possibility is an extension of the $2$-square in Figure~\ref{fig:U2-2-square} to the right. There are two possibilities for doing so, which are pictured in Figure~\ref{fig:U2-right}.

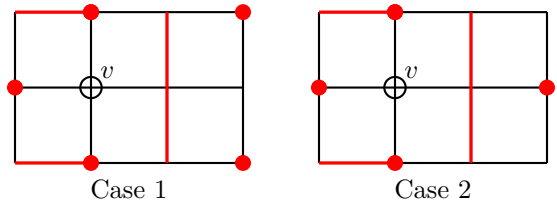
\begin{figure}
    \centering
    \begin{tikzpicture}
	\begin{scope}
	
		\foreach \x in {0,1,2, 3} {
		\draw[thick] (\x,0) -- (\x,2); 
	}
	\foreach \y in {0,1,2} {
		\draw[thick] (0,\y) -- (3,\y);
	}
	
	\draw[red, very thick] (2,0) -- (2,2);
	\draw[red, very thick] (0,0) -- (1,0);
	\draw[red, very thick] (0,2) -- (1,2);

	\fill[red] (0,1) circle (3pt);
	\fill[red] (1,0) circle (3pt);
	\fill[red] (1,2) circle (3pt);	
	\fill[red] (1,2) circle (3pt);
	\fill[red] (3,2) circle (3pt);
	\fill[red] (3,0) circle (3pt);
	
	\draw[thick] (1,1) circle (4pt); 
	\node[anchor=south west] at (1,1) {$v$};
		\node[anchor=north] at (1.5,-.1) {{Case 1}};
	
	\end{scope}
	
	\begin{scope}[xshift =4cm]
		
		\foreach \x in {0,1,2, 3} {
			\draw[thick] (\x,0) -- (\x,2); 
		}
		\foreach \y in {0,1,2} {
			\draw[thick] (0,\y) -- (3,\y);
		}
		
		\draw[red, very thick] (2,0) -- (2,2);
		\draw[red, very thick] (0,0) -- (1,0);
		\draw[red, very thick] (0,2) -- (1,2);

		\fill[red] (0,1) circle (3pt);
		\fill[red] (1,0) circle (3pt);
		\fill[red] (1,2) circle (3pt);	
		\fill[red] (1,2) circle (3pt);
		\fill[red] (3,1) circle (3pt);

		\draw[thick] (1,1) circle (4pt); 
		\node[anchor=south west] at (1,1) {$v$};
			\node[anchor=north] at (1.5,-.1) {{Case 2}};
		
	\end{scope}
\end{tikzpicture}
    \caption{The two possible extensions of Figure~\ref{fig:U2-2-square} to the right.}
    \label{fig:U2-right}
\end{figure}

Since any interior unramified vertex needs to be of type U.2, the only extension of Case 1 of Figure~\ref{fig:U2-right} to the right is the one pictured in Figure~\ref{fig:U2-right-Case1} with  unramified interior vertex $w$ of type U.2. We can argue as before that any further extension to the right or left would produce a rectangle as in Figure~\ref{fig:U2-left}, which can not be extended any further to the top or bottom. This rules out Case 1 of \Cref{fig:U2-right}.

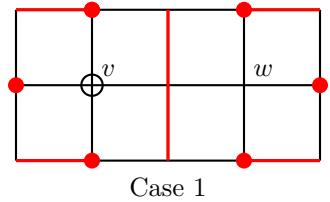
\begin{figure}
    \centering
    \begin{tikzpicture}

		\foreach \x in {0,1,2, 3, 4} {
		\draw[thick] (\x,0) -- (\x,2); 
	}
	\foreach \y in {0,1,2} {
		\draw[thick] (0,\y) -- (4,\y);
	}
	
	\draw[red, very thick] (2,0) -- (2,2);
	\draw[red, very thick] (0,0) -- (1,0);
	\draw[red, very thick] (0,2) -- (1,2);
	
	\draw[red, very thick] (3,0) -- (4,0);
	\draw[red, very thick] (3,2) -- (4,2);

	\fill[red] (0,1) circle (3pt);
	\fill[red] (1,0) circle (3pt);
	\fill[red] (1,2) circle (3pt);	
	\fill[red] (1,2) circle (3pt);
	\fill[red] (3,2) circle (3pt);
	\fill[red] (3,0) circle (3pt);
	\fill[red] (4,1) circle (3pt);
	
	\draw[thick] (1,1) circle (4pt); 
	\node[anchor=south west] at (1,1) {$v$};
	\node[anchor=south west] at (3,1) {$w$};
			\node[anchor=north] at (2,-.1) {{Case 1}};	
\end{tikzpicture}
    \caption{Extending Case 1 of Figure~\ref{fig:U2-right} further right.}
    \label{fig:U2-right-Case1}
\end{figure}

\begin{figure}
    \centering
   
\begin{tikzpicture}

	\foreach \x in {0,1,2, 3,4, 5} {
		\draw[thick] (\x,0) -- (\x,3); 
	}
	\foreach \y in {0,1,2, 3} {
		\draw[thick] (0,\y) -- (5,\y);
	}

	\draw[red, very thick] (0,1) -- (1,1);
	\draw[red, very thick] (0,3) -- (1,3);
	\draw[red, very thick] (2,0) -- (2,3);
	\draw[red, very thick] (4,0) -- (4,1);
	\draw[red, very thick] (3,2) -- (5,2);

	\fill[red] (1,3) circle (3pt);
	\fill[red] (4,3) circle (3pt);
	\fill[red] (0,2) circle (3pt);
	\fill[red] (3,2) circle (3pt);
	\fill[red] (4,1) circle (3pt);
	\fill[red] (1,1) circle (3pt);
	\fill[red] (3,0) circle (3pt);
	\fill[red] (0,0) circle (3pt);	

	\draw[thick] (1,2) circle (4pt); 
	\node[anchor=south west] at (1,2) {$v$};
	
	\node[anchor=north] at (2.5,-.1) {{Case 2}};
	
\end{tikzpicture}
    \caption{The only possible extension of Case 2 of Figure~\ref{fig:U2-right} to the right and bottom.}
    \label{fig:U2-right-Case2}
\end{figure}
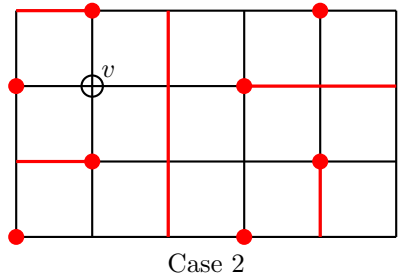

Finally, in Case 2 of Figure~\ref{fig:U2-right} we argue as before that the only possible extension to the right is given by a vertex of type R.2 followed by a vertex of type R.1. The combinatorics of ramified sets in $2$-cubes then imply that the only admissible downwards extension to a $(3\times 5)$-rectangle would have to look as in Figure~\ref{fig:U2-right-Case2}. Moreover, any further downwards extension would lead to an interior ramified subgraph, contradicting \Cref{prop:no-interior-ramification}. By symmetries the same argument shows that we can extend upwards by at most one row. We deduce that all extensions of Case 2 to embedded rectangles can at most contain an embedded $4$-square. This completes the proof.
\end{proof}

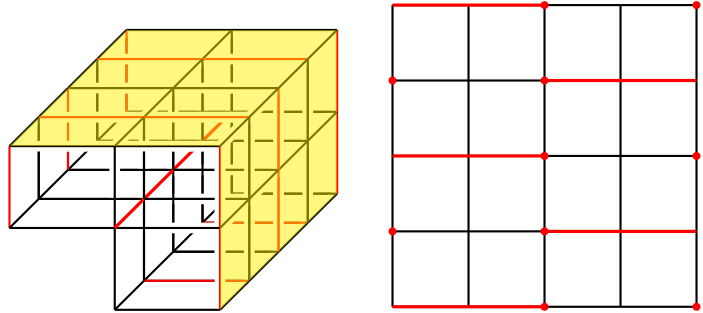
\begin{figure}
    \centering

   	\begin{tikzpicture}[scale=0.5]

				\foreach \x in {0,1, 2} {
					\foreach \y in {0, 1, 2}{
						\foreach \z in {0, 1, 2, 3, 4}{
							\coordinate[label=above:] (\x\y\z) at (2.77*\x, 2.17*\y, 2*\z);
				}}}
		\foreach \s in {0}{
			\draw[thick] (02\s) -- (22\s);
			\draw[thick] (01\s) -- (21\s);
			\draw[thick] (10\s) -- (20\s);
			\draw[thick] (02\s) -- (01\s);
			\draw[thick] (12\s) -- (10\s);
			\draw[thick] (22\s) -- (20\s);
		}
		\def\li{4pt}
		\foreach \s in {1, 2, 3, 4}{
		
			\pgfmathtruncatemacro\t{\s-1};
			\draw[white, line width=\li] (02\s) -- (22\s);
			\draw[white, line width=\li] (01\s) -- (21\s);
			\draw[white, line width=\li] (10\s) -- (20\s);
			\draw[white, line width=\li] (02\s) -- (01\s);
			\draw[white, line width=\li] (12\s) -- (10\s);
			\draw[white, line width=\li] (22\s) -- (20\s);
			\draw[white, line width=\li] (01\t) -- (01\s);
			\draw[white, line width=\li] (02\t) -- (02\s);
			\draw[white, line width=\li] (10\t) -- (10\s);
			\draw[white, line width=\li] (11\t) -- (11\s);
			\draw[white, line width=\li] (12\t) -- (12\s);
			\draw[white, line width=\li] (20\t) -- (20\s);
			\draw[white, line width=\li] (21\t) -- (21\s);
			\draw[white, line width=\li] (22\t) -- (22\s);
			
			\ifthenelse{\t=0\OR \t=2}{\draw[thick] (02\t) -- (22\t);}{\draw[thick, red] (02\t) -- (22\t);}
			\draw[thick] (01\t) -- (21\t);
			\ifthenelse{\t=2\OR \t=0}{\draw[thick] (10\t) -- (20\t);}{\draw[thick, red] (10\t) -- (20\t);}
			\ifthenelse{\t=1\OR \t=3}{\draw[thick] (02\t) -- (01\t);}{\draw[thick, red] (02\t) -- (01\t);}
			\draw[thick] (12\t) -- (10\t);
			\ifthenelse{\t=1\OR \t=3}{\draw[thick] (22\t) -- (20\t);}{\draw[thick, red] (22\t) -- (20\t);}

			\draw[white, line width=\li] (02\s) -- (22\s);
			\draw[white, line width=\li] (22\s) -- (20\s);
			
			\ifthenelse{\s=2\OR \s=4}{\draw[thick] (02\s) -- (22\s);}{\draw[thick, red] (02\s) -- (22\s);}
			\draw[thick] (01\s) -- (21\s);
			\draw[thick] (10\s) -- (20\s);
			\ifthenelse{\s=1\OR \s=3}{\draw[thick] (02\s) -- (01\s);}{\draw[thick, red] (02\s) -- (01\s);}
			\draw[thick] (12\s) -- (10\s);
			\ifthenelse{\s=1\OR \s=3}{\draw[thick] (22\s) -- (20\s);}{\draw[thick, red] (22\s) -- (20\s);}

			\draw[thick] (01\t) -- (01\s);
			\draw[thick] (02\t) -- (02\s);
			\draw[thick] (10\t) -- (10\s);
			\draw[very thick, red] (11\t) -- (11\s);
			\draw[thick] (12\t) -- (12\s);
			\draw[thick] (20\t) -- (20\s);
			\draw[thick] (21\t) -- (21\s);
			\draw[thick] (22\t) -- (22\s);
		}
		\fill[yellow, opacity=0.5] (024) -- (020) -- (220) -- (224) -- cycle;
		\fill[yellow, opacity=0.5] (204) -- (200) -- (220) -- (224) -- cycle;
			
			\begin{scope}[xshift=7cm, yshift=-3cm]
					\foreach \x in {0,1, 2, 3, 4} {
					\foreach \y in {0, 1, 2, 3, 4}{
							\coordinate[label=above:] (\x\y) at (2*\x, 2*\y);
				}}
			\draw[thick] (00) -- (40);
			\draw[thick] (01) -- (41);
			\draw[thick] (02) -- (42);
			\draw[thick] (03) -- (43);
			\draw[thick] (04) -- (44);
			\draw[thick] (00) -- (04);
			\draw[thick] (10) -- (14);
			\draw[thick] (20) -- (24);
			\draw[thick] (30) -- (34);
			\draw[thick] (40) -- (44);
			
			\draw[red, very thick] (04) -- (24);
			\draw[red, very thick] (23) -- (43);
			\draw[red, very thick] (02) -- (22);
			\draw[red, very thick] (21) -- (41);
			\draw[red, very thick] (00) -- (20);
			
			\fill[red] (20) circle (3pt);
			\fill[red] (40) circle (3pt);
			\fill[red] (21) circle (3pt);
			\fill[red] (01) circle (3pt);
			\fill[red] (22) circle (3pt);
			\fill[red] (42) circle (3pt);
			\fill[red] (23) circle (3pt);
			\fill[red] (03) circle (3pt);
			\fill[red] (24) circle (3pt);
			\fill[red] (44) circle (3pt);
			
			\end{scope}

			\end{tikzpicture}
    \caption{Example of a $4$-square that isometrically embeds in $\tX^{(1)}$}
    \label{fig:4-square}
\end{figure}

\begin{proof}[Proof of Theorem~\ref{thm:hyperbolicity-constant}]
    By \Cref{thm-Chepoi,thm:CDEHV} it suffices to determine the maximal $C>0$ such that $\tX^{(1)}$ contains an isometrically embedded $C$-square.
    
    Proposition \ref{prop:no-5-squares} shows that $\tX^{(1)}$ contains no isometrically embedded $5$-squares. It remains to show the existence of an isometrically embedded $4$-square. Figure~\ref{fig:4-square} provides an explicit example, showing its existence and thus completing the proof.
\end{proof}

\begin{remark}\label{rmk:Lodha-Kropholler-examples}
    Our proof did not use specific properties of the example from~\cite{Bra-99} that are not shared by its generalisations in~\cite{Kro-21} and~\cite{Lod-18}. Our methods thus readily extend to also provide precise computations of their hyperbolicity constants, as well as upper bounds on their Dehn functions. The hyperbolicity constant will always be 4. However, the expansion constant $C_3$ will vary and in fact go to infinity in a suitable chosen sequence of examples, since they can be constructed so that the areas of certain $4$-cycles in the ascending and descending links go to infinity (replace e.g. the ramified covering in Brady's construction by a higher degree ramified covering so that $4$-cycles lift to $n_k$-cycles and $n_k\to \infty$ as $k\to \infty$). Moreover, it may be possible to produce an example where the associated constant $C_3$ is smaller than for Brady's example, which would produce a non-hyperbolic finitely presented subgroup of a hyperbolic group with a better upper bound on its Dehn function than the one given in \Cref{thm:Main-Dehn}.
\end{remark}

\frenchspacing
\bibliography{References}

\newcommand{\etalchar}[1]{$^{#1}$}
\begin{thebibliography}{ABD{\etalchar{+}}13}

\bibitem[ABD{\etalchar{+}}13]{ABDDY-13}
Aaron Abrams, Noel Brady, Pallavi Dani, Moon Duchin, and Robert Young.
\newblock Pushing fillings in right-angled {A}rtin groups.
\newblock {\em J. Lond. Math. Soc. (2)}, 87(3):663--688, 2013.

\bibitem[Alo90]{Alo-90}
Juan~M. Alonso.
\newblock In\'egalit\'es isop\'erim\'etriques et quasi-isom\'etries.
\newblock {\em C. R. Acad. Sci. Paris S\'er. I Math.}, 311(12):761--764, 1990.

\bibitem[BB97]{BesBra-97}
Mladen Bestvina and Noel Brady.
\newblock Morse theory and finiteness properties of groups.
\newblock {\em Invent. Math.}, 129(3):445--470, 1997.

\bibitem[BH99]{BriHae-99}
Martin~R. Bridson and Andr\'{e} Haefliger.
\newblock {\em Metric spaces of non-positive curvature}, volume 319 of {\em
  Grundlehren der mathematischen Wissenschaften [Fundamental Principles of
  Mathematical Sciences]}.
\newblock Springer-Verlag, Berlin, 1999.

\bibitem[Bra99]{Bra-99}
Noel Brady.
\newblock Branched coverings of cubical complexes and subgroups of hyperbolic
  groups.
\newblock {\em J. London Math. Soc. (2)}, 60(2):461--480, 1999.

\bibitem[Bri02]{Bri-02}
Martin~R. Bridson.
\newblock The geometry of the word problem.
\newblock In {\em Invitations to geometry and topology}, volume~7 of {\em Oxf.
  Grad. Texts Math.}, pages 29--91. Oxford Univ. Press, Oxford, 2002.

\bibitem[BRS07]{BRS-07}
Noel Brady, Tim Riley, and Hamish Short.
\newblock {\em The geometry of the word problem for finitely generated groups}.
\newblock Advanced Courses in Mathematics. CRM Barcelona. Birkh\"auser Verlag,
  Basel, 2007.
\newblock Papers from the Advanced Course held in Barcelona, July 5--15, 2005.

\bibitem[CDE{\etalchar{+}}08]{CDEHV-08}
Victor Chepoi, Feodor~F. Dragan, Bertrand Estellon, Michel Habib, and Yann
  Vax\`es.
\newblock Notes on diameters, centers, and approximating trees of
  {$\delta$}-hyperbolic geodesic spaces and graphs.
\newblock In {\em The {I}nternational {C}onference on {T}opological and
  {G}eometric {G}raph {T}heory}, volume~31 of {\em Electron. Notes Discrete
  Math.}, pages 231--234. Elsevier Sci. B. V., Amsterdam, 2008.

\bibitem[CF17]{CarFor-17}
William Carter and Max Forester.
\newblock The {D}ehn functions of {S}tallings--{B}ieri groups.
\newblock {\em Math. Ann.}, 368(1--2):671--683, 2017.

\bibitem[Che00]{Che-00}
Victor Chepoi.
\newblock Graphs of some {${\rm CAT}(0)$} complexes.
\newblock {\em Adv. in Appl. Math.}, 24(2):125--179, 2000.

\bibitem[Dis08]{Dis-08}
Will Dison.
\newblock An isoperimetric function for {B}estvina-{B}rady groups.
\newblock {\em Bull. Lond. Math. Soc.}, 40(3):384--394, 2008.

\bibitem[GdlH90]{dlHGhys-90}
\'{E}tienne Ghys and Pierre de~la Harpe.
\newblock Espaces m\'{e}triques hyperboliques.
\newblock In {\em Sur les groupes hyperboliques d'apr\`es {M}ikhael {G}romov
  ({B}ern, 1988)}, volume~83 of {\em Progr. Math.}, pages 27--45.
  Birkh\"{a}user Boston, Boston, MA, 1990.

\bibitem[Gro87]{Gro-87}
Misha Gromov.
\newblock Hyperbolic groups.
\newblock In {\em Essays in group theory}, pages 75--263. Springer, 1987.

\bibitem[GS02]{GerSho-02}
Steve Gersten and Hamish Short.
\newblock Some isoperimetric inequalities for kernels of free extensions.
\newblock {\em Geom. Dedicata}, 92:63--72, 2002.
\newblock Dedicated to John Stallings on the occasion of his 65th birthday.

\bibitem[IMM23]{IMM-22}
Giovanni Italiano, Bruno Martelli, and Matteo Migliorini.
\newblock Hyperbolic 5-manifolds that fiber over {$S^1$}.
\newblock {\em Invent. Math.}, 231(1):1--38, 2023.

\bibitem[IMM24]{IMM-21}
Giovanni Italiano, Bruno Martelli, and Matteo Migliorini.
\newblock Hyperbolic manifolds that fibre algebraically up to dimension 8.
\newblock {\em J. Inst. Math. Jussieu}, 23(2):609--646, 2024.

\bibitem[KLI24]{KroLlo-23}
Robert Kropholler and Claudio Llosa~Isenrich.
\newblock Finitely presented kernels of homomorphisms from hyperbolic groups
  onto free abelian groups.
\newblock {\em International Mathematics Research Notices},
  2024(23):14219--14232, 10 2024.

\bibitem[Kro21]{Kro-21}
Robert Kropholler.
\newblock Hyperbolic groups with finitely presented subgroups not of type
  {$F_3$}.
\newblock {\em Geom. Dedicata}, 213:589--619, 2021.

\bibitem[LI24]{Llo-24}
Claudio Llosa~Isenrich.
\newblock From the second {BNSR} invariant to {D}ehn functions of coabelian
  subgroups.
\newblock {\em arXiv preprint arXiv:2404.12334}, 2024.

\bibitem[LIMP24]{LIMP-21}
Claudio Llosa~Isenrich, Bruno Martelli, and Pierre Py.
\newblock Hyperbolic groups containing subgroups of type {$\mathcal{F}_3$} not
  {$\mathcal{F}_4$}.
\newblock {\em J. Differential Geom.}, 127(3):1121--1147, 2024.

\bibitem[LIP23]{LloPy-23}
Claudio Llosa~Isenrich and Pierre Py.
\newblock Groups with exotic finiteness properties from complex {M}orse theory.
\newblock {\em arXiv preprint arXiv:2310.04073}, 2023.
\newblock To appear in Journal of Topology.

\bibitem[LIP24]{LloPy-22}
Claudio Llosa~Isenrich and Pierre Py.
\newblock Subgroups of hyperbolic groups, finiteness properties and complex
  hyperbolic lattices.
\newblock {\em Invent. Math.}, 235(1):233--254, 2024.

\bibitem[Lod18]{Lod-18}
Yash Lodha.
\newblock A hyperbolic group with a finitely presented subgroup that is not of
  type {$FP_3$}.
\newblock In {\em Geometric and cohomological group theory}, volume 444 of {\em
  London Math. Soc. Lecture Note Ser.}, pages 67--81. Cambridge Univ. Press,
  Cambridge, 2018.

\bibitem[Rip82]{Rip-82}
Eliyahu Rips.
\newblock Subgroups of small cancellation groups.
\newblock {\em Bull. London Math. Soc.}, 14(1):45--47, 1982.

\end{thebibliography}
\bibliographystyle{alpha}

\end{document}